\tikzset{->-/.style={decoration={markings,mark=at position #1 with {\arrow{>}}},postaction={decorate}}}
\definecolor{red}{rgb}{1,0,0} 
 \definecolor{darkgreen}{rgb}{0, .7, 0}
 \definecolor{purple}{rgb}{.7, 0, 1}
\tikzset{mynode/.style={draw,circle,fill=black,inner sep=2pt,outer sep=0.5pt}}
\newtheorem{theorem}{Theorem}[section]
\newtheorem*{theorem*}{Theorem}
\newtheorem*{lemma*}{Lemma}
\newtheorem{proposition}[theorem]{Proposition}
\newtheorem{lemma}[theorem]{Lemma}
\newtheorem{corollary}[theorem]{Corollary}
\theoremstyle{definition}
\newtheorem{definition}[theorem]{Definition}
\theoremstyle{remark}
\begin{document}
\title{On the Bieri-Neumann-Strebel-Renz invariants and limit groups over Droms RAAGs}
\author{Dessislava H. Kochloukova, Jone Lopez de Gamiz Zearra}

\maketitle

\begin{abstract} For a group $G$ that is a limit group over Droms RAAGs such that $G$ has trivial center, we show that $\Sigma^1(G) = \emptyset = \Sigma^1(G, \mathbb{Q})$. For a group $H$ that is a finitely presented residually Droms RAAG we calculate $\Sigma^1(H)$ and $\Sigma^2(H)_{dis}$. In addition, we obtain a necessary condition for $[\chi]$ to belong to $\Sigma^n(H)$.
\end{abstract}

\section{Introduction}

In this paper we study the Bieri--Neumann--Strebel--Renz invariants of limit groups over Droms right-angled Artin groups (RAAGs). A RAAG is called a \emph{Droms RAAG} if every finitely generated subgroup is again a RAAG. Droms RAAGs can be characterized as the RAAGs where the defining graph does not contain full subgraphs that are squares or lines of length 3 (see \cite{Droms2}). In particular, finitely generated free groups are Droms RAAGs, so the class of \emph{limit groups over Droms RAAGs} contains the class of \emph{limit groups over free groups}. A group $H$ is a \emph{limit group over Droms RAAGs} if it is finitely generated and \emph{fully residually Droms RAAG}, that is, for every finite subset $S$ of $H$ there is a homomorphism $\varphi \colon H \mapsto G_S$ where $G_S$ is a Droms RAAG and $\varphi$ is injective on $S$. It is easy to see that this is equivalent to the existence of a Droms RAAG $G$ such that all $G_S$ can be chosen to be equal to $G$. In this case we say that $H$ is a limit group over $G$.

The class of limit groups over free groups was studied by O. Kharlampovich, A. Miasnikov, under the name of \emph{fully residually free groups}, and independently by Z. Sela, who fixed the name \emph{limit group}. The class of limit groups over free groups contains all finitely generated free abelian groups and surface groups of Euler characteristic at most $-2$.

 The study of \emph{limit groups over coherent RAAGs}, in particular of limit groups over Droms RAAGs, was initiated by M. Casals-Ruiz, A. Duncan and I. Kazachkov in \cite{Montse} and \cite{Montse2}.
 Limit groups over Droms RAAGs share many properties with limit groups over free groups. For instance, in \cite{Montse} M. Casals-Ruiz, A. Duncan and  I. Kazachkov proved that limit groups over Droms RAAGs are finitely presented and of type $FP_{\infty}$, and that their finitely generated subgroups are again limit groups over Droms RAAGs. In \cite{Jone} J. Lopez de Gamiz Zearra showed that finitely presented subdirect products of limit groups over Droms RAAGs behave as finitely presented subdirect products of limit groups over free groups. In \cite{preprint1} D. Kochloukova and J. Lopez de Gamiz Zearra studied further the structure of subdirect products of type $FP_m$ of limit groups over Droms RAAGs. The starting point in \cite{preprint1} is the fact that limit groups over Droms RAAGs are free-by-(torsion-free nilpotent). In the case of limit groups over free groups this was first observed in \cite{Desi}.
 Some of the results in \cite{preprint1} treat the more general class of limit groups over coherent RAAGs, where their  analytic Betti numbers are calculated.
 
The \emph{Bieri--Neumann-Strebel--Renz invariants} are specific open subsets in the character sphere $S(G)$ of a group $G$. They are a tool to control when a subgroup containing the commutator subgroup is of type $FP_n$ or is finitely presented. The invariants are separated in two groups: the homotopical invariants $\{ \Sigma^n(G) \}_{n}$  and the homological ones $\{ \Sigma^n(G, \mathbb{Z}) \}_n$. In general, whenever the invariants are defined we have the inclusions
  \[\dots \subseteq \Sigma^n(G) \subseteq \Sigma^{ n-1} (G) \subseteq \dots \subseteq \Sigma^1(G) \subseteq S(G)\]
  and
   \[\dots \subseteq \Sigma^n(G, \mathbb{Z}) \subseteq \Sigma^{ n-1} (G, \mathbb{Z}) \subseteq \dots \subseteq \Sigma^1(G, \mathbb{Z}) \subseteq S(G),\]
where $S(G)$ denotes the character sphere of $G$.
The homotopical invariant $\Sigma^n(G)$ is defined only for groups $G$ of homotopical type $F_n$. In general, $\Sigma^n(G, \mathbb{Z})$ is defined for any finitely generated group $G$, but if $\Sigma^n(G, \mathbb{Z}) \not= \emptyset$, then $G$ is of homological type $FP_n$. If $G$ is of homotopical type $F_n$ and $n \geq 2$, it is shown in \cite{Renzthesis} that \[\Sigma^n(G) = \Sigma^2(G) \cap \Sigma^n(G, \mathbb{Z}).\]
   
At first, the invariant $\Sigma^1$ was defined for finitely generated metabelian groups and later, in \cite{B-N-S},  R. Bieri, W.D. Neumann and R. Strebel extended the definition to the class of all finitely generated groups and they showed that the invariant is linked with the Thurston semi-norm in $3$-dimensional topology. The homological invariants were studied by R. Bieri and B. Renz in \cite{B-Renz}, and the fact that they control the homological finiteness properties of a subgroup containing the commutator subgroup was proved together with the fact that the invariants are always open subsets of the character sphere.

In general, the Bieri--Neumann--Strebel--Renz invariants are difficult to compute, but they are known for some classes of groups, including limit groups over free groups (see \cite{Desi}).

For example, the case of the Thompson group $F$ and the generalized Thompson groups $F_{n, \infty}$ was treated by Bieri-Groves-Kochloukova,   Witzel-Zaremsky and M. Zaremsky  in   \cite{B-G-K},  \cite{W-Z} and \cite{Z}. For a finitely generated metabelian group $G$, the structure of the complement of $\Sigma^1(G)$ in the character sphere $S(G)$ as a rationally defined polyhedron was proved by Bieri-Groves in \cite{B-Groves} and this result has numerous applications in tropical geometry.  The case when $G$ is the fundamental group of a compact K\"ahler manifold was studied by T. Delzant in \cite{D} and the case of a free-by-cyclic group was considered by Funke-Kielak and D. Kielak in \cite{B-F} and \cite{DK}. The case when $G$ is a  right-angled Artin group was settled by Meier-Meinert-VanWyk (\cite{Meinert-VanWyk}) but for general Artin groups just some particular cases are known. For example, some Artin groups that are not RAAGs were considered by K. Almeida in \cite{Kisnney} and some even Artin groups were considered by R. Blasco Garc\'ia, J. I. Cogolludo Agust\'in, C. Mart\'inez P\'erez in \cite{Conchita} and by D. Kochloukova in \cite{K}.

In Section \ref{Section 7}, we give a brief introduction to the topic. Generalising the results from \cite{Desi}  and \cite{Desi2}, we compute the invariant $\Sigma^1(H)$ for a finitely presented residually Droms RAAG $H$ and the invariants $\Sigma^m(G)$ and $\Sigma^m(G,\mathbb{Q})$ for a limit group $G$ over Droms RAAGs such that $G$ has trivial center.

\medskip

{\bf Proposition A}
{\it Let $G$ be a limit group over Droms RAAGs such that $G$ has trivial center. Then $\Sigma^n(G) = \Sigma^n(G, \mathbb{Z}) = \Sigma^n (G, \mathbb{Q}) = \emptyset$ for every $n \geq 1.$}

\medskip

We recall that a subgroup $H < L_0 \times L_1 \times \cdots \times L_m$ is a \emph{subdirect product} if $H$ projects surjectively on each $L_i$. The subdirect product is \emph{full} if $H \cap L_i$ is non-trivial for each $i\in \{0,\dots,m\}$. In addition, the full subdirect product $H  < L_0 \times L_1 \times \cdots \times L_m$ is \emph{neat} if $L_0$ is abelian, $H \cap L_0$ has finite index in $L_0$ and  $L_i$ has trivial center for $  1 \leq i \leq m$. By the results of  G. Baumslag, A. Miasnikov and V. Remeslennikov in \cite{Baumslag}, a residually Droms RAAG $G$ can be viewed as a full subdirect product of limit groups over Droms RAAGs, that is, $G < L_0 \times L_1 \times \cdots \times L_m$ is a full subdirect product where $L_0= \mathbb{Z}^n$ for some $n \geq 0$ and $L_1, \dots, L_m$ are limit groups over Droms RAAGs such that each $L_i$ has trivial center for $1 \leq i \leq m$. Thus, $n = 0$ if $G$ has trivial center.

By the results of  M. Bridson, J. Howie, C. F. Miller and H. Short in \cite{Bridson} and \cite{Bridson2}, $H$ is  a finitely presented residually free group if and only if  $H$ is a neat full subdirect product in $L_0 \times L_1 \times \cdots \times L_m$ where each group $L_i$ is a limit group over a free group and $H$ maps virtually surjectively on pairs. In \cite{Jone} J. Lopez de Gamiz Zearra extended this to the case of residually Droms RAAGs, that is, $H$ is a finitely presented residually Droms RAAG if and only if $H$ is a neat full subdirect product in $L_0 \times L_1 \times \cdots \times L_m$ where each group $L_i$ is a limit group over a Droms RAAG and $H$ maps virtually surjectively on pairs.

In our next result we give a necessary condition for a point from the character sphere $S(H)$ to belong to  $\Sigma^m(H,\mathbb{Q})$ for a finitely presented residually Droms RAAG $H$.  For a character $\chi \colon H \mapsto \mathbb{R}$ we set \[ H_{\chi}= \{ h\in H \mid \chi(h) \geq 0\},\] $[\chi]$ is the equivalence class of $\chi$, that is, $[\chi]$ is the set of characters $\mathbb{R}_{>0}\chi$ and \[S(H)= \{ [\chi] \mid \chi \text{ is a character of } H\}.\] Then, by definition,
\[ \Sigma^n(H,\mathbb{Q})= \{ [\chi] \in S(H)  \mid \mathbb{Q} \text{ is of homological type } FP_{n} \text{ over } \mathbb{Q}H_{\chi}\}.\]

\medskip
{\bf Theorem B}
{\it Let  $m \geq 2$, $1 \leq n \leq m$ and $H < L_1 \times \cdots \times L_m$ be a finitely presented full subdirect product of limit groups over Droms RAAGs  $L_1, \dots, L_m$ where each $L_i$ has trivial center. Suppose that $[\chi] \in \Sigma^n(H, \mathbb{Q})$. Then
\begin{equation} \label{monod0} p_{j_1, \dots, j_n}(H_{\chi}) = p_{j_1, \dots, j_n}(H) \quad \text{for all} \quad 1 \leq j_1 < \dots  < j_n \leq m, \end{equation} where $p_{j_1, \dots, j_n}\colon H \mapsto L_{j_1} \times \cdots \times L_{j_n}$ is  the canonical projection.}

\medskip
Note that whenever $\Sigma^n(H,\mathbb{Q})\neq \emptyset$, the group $H$ is of homological type $FP_n(\mathbb{Q})$ and by \cite{preprint1}, this implies that $[L_{j_1}\times \cdots \times L_{j_n} \colon p_{j_1,\dots,j_n}(H)]< \infty.$

We recall that for a subgroup $M$ of a group $H$, \[S(H, M) = \{ [\chi] \in S(H) ~| ~\chi(M) = 0 \}.\]

{\bf Corollary C}
{\it a) Let $H < L_1 \times \cdots \times L_m$ be a finitely presented full subdirect product of limit groups over Droms RAAGs  $L_1, \dots, L_m$ where each $L_i$ has trivial center. Then  \[\Sigma^1(H) =  \Sigma^1(H, \mathbb{Q}) = \{ [\chi] \in S(H) \mid p_i(H_{\chi}) = p_i(H) = L_i \text{ for every }1 \leq i \leq m \},\] and thus,
\[S(H) \setminus \Sigma^1(H) = \bigcup_{1 \leq i \leq m} S(H, \ker (p_i)),\]
where $p_i \colon H \mapsto L_i$ is the canonical projection.\\[5pt]
b) If $H$ is a finitely presented residually Droms RAAG, then there exist finitely many subgroups $H_1, \dots, H_m$ of $H$ such that
\[S(H) \setminus \Sigma^1(H) = \bigcup_{1 \leq i \leq m} S(H, H_i).\]
}

Observe that Corollary C shows the converse of Theorem B for $n=1$. Nevertheless, it is still an open problem whether the converse of Theorem B holds for $n\geq 2$ and this conjecture was named by D. Kochloukova and F. Lima in \cite{Desi2} as \emph{the Monoidal Virtual Surjection Conjecture}. In the next result we show that under certain conditions, namely when the Virtual Surjection Conjecture holds, the Monoidal Virtual Surjection Conjecture is true.

The Virtual Surjection Conjecture is a generalization of the VSP Criterion. In \cite{Bridson2}, M. Bridson, J. Howie, C.F. Miller and H. Short proved the VPS Criterion:

{\bf The VSP Criterion}
{\it Let $H$ be a subgroup of the direct product $G_1 \times \cdots \times G_m$ of finitely presented groups $G_1,\dots,G_m$. If $H$ is virtually surjective on pairs, then $H$ is finitely presented.}

\emph{The Virtual Surjection Conjecture} states that the same holds for groups of homotopical type $F_n$, that is, if $n\leq m$ are positive integers, $H$ is a subgroup of the direct product $G_1\times \cdots \times G_m$ where $G_i$ is of type $F_n$ for $1\leq i \leq m$ and $H$ is virtually surjective on $n$-tuples, then $H$ is of type $F_n$.

If $T$ is a subset of $S(H)$, we denote by $T_{dis}$ the set of discrete points $[\chi]$ in $T$, that is, $\chi$ is a discrete character.

\medskip
{\bf Theorem D} {\it a) Let $1 \leq n \leq m$ and $m \geq 2$ be integers and $H < L_1 \times \cdots \times L_m$ be a  full subdirect product of limit groups over Droms RAAGs  $L_1, \dots, L_m$ such that each $L_i$ has trivial center and $H$ is of type $FP_n$ and finitely presented. Suppose that the Virtual Surjection Conjecture holds in dimension $n$. Then \[[\chi] \in \Sigma^n(H, \mathbb{Q})_{dis} = \Sigma^n(H, \mathbb{Z})_{dis} = \Sigma^n(H)_{dis}\] if and only if 	
\begin{equation} \label{monod0} p_{j_1, \dots, j_n}(H_{\chi}) = p_{j_1, \dots, j_n}(H) \quad \text{for all} \quad 1 \leq j_1 < \dots  < j_n \leq m, \end{equation} where $p_{j_1, \dots, j_n}\colon H \mapsto L_{j_1} \times \cdots \times L_{j_n}$ is  the canonical projection. In particular, since the  Virtual Surjection Conjecture holds in dimension $2$, the result holds for $n = 2$ without further assumptions.\\[5pt]
b) If $H$ is  a finitely presented residually  Droms RAAG, then there exist finitely many subgroups $H_{i,j}$ of $H$, where $ 1 \leq i < j \leq n$,  such that
\[S(H)_{dis} \setminus \Sigma^2(H)_{dis} = \bigcup_{1 \leq i < j \leq m} S(H, H_{i,j})_{dis}.\] }

\medskip
Finally, as an application of $\Sigma$-theory, we answer in Corollary \ref{aplication} a question posed in \cite{Jone-Montse} about finiteness properties of finitely presented subgroups of the direct product of two particular RAAGs. 
In \cite{Jone-Montse}, finitely presented subgroups of the direct product of two \emph{coherent RAAGs of dimension $2$} are studied. For that, the authors define two classes of groups, $\mathcal{G}$ and $\mathcal{A}$. The class $\mathcal{G}$ is defined to be the class of cyclic subgroup separable graphs of groups with free abelian vertex groups and infinite cyclic or trivial edge groups. The class $\mathcal{A}$ is the \emph{$Z\ast-$closure} of $\mathcal{G}$. In \cite[Theorem 5.1]{Jone-Montse} it is shown that if $S$ is a finitely presented subgroup of $G_1\times G_2$, where $G_1,G_2\in \mathcal{A}$ and $G_1, G_2$ are finitely generated, then $S$ is virtually the kernel of a homomorphism $f\colon H_1\times H_2 \mapsto \mathbb{R}$, where $H_1,H_2\in \mathcal{A}$ and $\text{im}(f) \simeq \mathbb{Z}^n$ for some $n\in \mathbb{N}\cup \{0\}$. In \cite{Jone-Montse} it was stated that it is expected that Morse theory can be useful to further study the finiteness properties of $S$. Using methods from $\Sigma$-theory we show in Section \ref{sect-f}  that $S$ is of type $F_{\infty}$. Actually, we prove in Proposition E a more general result. Let $\mathcal{C}$ be the class of fundamental groups of finite reduced graphs of groups where the vertex groups are free abelian of rank greater than $1$ and the edge groups are infinite cyclic or trivial.  Let $\mathcal {J}$ be the $Z \ast$-closure of $\mathcal{C}$.

\medskip
{\bf Proposition E} 
{\it Let $S < H_1 \times H_2$ be a co-abelian, finitely presented, subdirect product with $H_1, H_2 \in \mathcal{J}$. Then, $S$ is of type $F_{\infty}$.}

{\bf Acknowledgements}  D. Kochloukova was partially supported by the CNPq grant 301779/2017-1  and by the FAPESP grant 2018/23690-6. J. Lopez de Gamiz Zearra was supported by the Basque Government grant IT974-16 and the Spanish Government grant MTM2017-86802-P.
 
\section{Preliminaries on limit groups over Droms RAAGs}
\label{Limit groups}

Given a simplicial graph $X$, the corresponding \emph{right-angled Artin group (RAAG)}, denoted by $GX$, is generated by the vertex set $V(X)$ of $X$. Then, $GX$ is defined by the presentation
\[ GX= \langle V(X) \mid xy=yx \iff x \text{ and } y \text{ are adjacent}\rangle.\]

In \cite{Droms2}  Carl Droms described when all finitely generated subgroups of a RAAG are again RAAGs, and he showed that $X$ could not contain a square or a line of length $3$. These groups are known as \emph{Droms RAAGs}.

If $G$ is a group, a \emph{limit group over $G$} is a group $H$ that is finitely generated and fully residually $G$, that is, for any finite set of non-trivial elements $T \subseteq H$ there is a homomorphism $\mu \colon H \mapsto G$ which is injective on $T$.

We state several results from \cite{preprint1} about limit groups over Droms RAAGs that generalise earlier results for limit groups over free groups from \cite{Desi}.

\begin{proposition} \cite{preprint1} \label{free-by-(torsion-free nilpotent)}
Limit groups over Droms RAAGs are free-by-(torsion-free nilpotent).
\end{proposition}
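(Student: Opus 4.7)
My plan is to reduce the claim to two independent problems: first, showing that any Droms RAAG is itself free-by-(torsion-free nilpotent); and second, showing that the free-by-(torsion-free nilpotent) property propagates along the hierarchy that describes limit groups over Droms RAAGs. The first step exploits the inductive definition of Droms graphs (starting from a single vertex and closing under disjoint union and coning); the second step is modeled on the ICE-tower argument used in \cite{Desi} for the free case, adapted to the constructions of Casals-Ruiz--Duncan--Kazachkov in \cite{Montse,Montse2}.

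For the base case, I would argue by induction on the number of vertices of the defining Droms graph. The group $\mathbb{Z}$ is trivially free-by-(torsion-free nilpotent). If $GX = GY \times \mathbb{Z}$ and $GY$ has a free normal subgroup $N$ with $GY/N$ torsion-free nilpotent, then $N$ is still normal in $GX$ and $GX/N \cong (GY/N) \times \mathbb{Z}$ is torsion-free nilpotent, so the property passes to cones. The interesting case is $GX = GY_1 * GY_2$: given free normal subgroups $N_i \trianglelefteq GY_i$ with torsion-free nilpotent quotients $Q_i$, I would consider the map $GX \to Q_1 \times Q_2$ and let $K$ be its kernel. A short computation shows $K \cap GY_i = N_i$, so $K$ acts on the Bass--Serre tree of the free product with trivial edge stabilizers and free vertex stabilizers, and Bass--Serre theory gives that $K$ is free. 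Since $Q_1 \times Q_2$ is torsion-free nilpotent, $GX$ is free-by-(torsion-free nilpotent).

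For the inductive step on the hierarchy of a limit group $H$ over a Droms RAAG, I would use the structural results in \cite{Montse,Montse2} that place $H$ inside a tower obtained from a Droms RAAG by iterated extensions of centralizers of maximal abelian subgroups; equivalently, $H$ embeds in such an iterated extension. Since subgroups of free-by-(torsion-free nilpotent) groups are again free-by-(torsion-free nilpotent) (intersect with the free normal subgroup, and use that subgroups of torsion-free nilpotent groups are torsion-free nilpotent), it suffices to prove the property is preserved under a single extension of centralizers $G \rightsquigarrow G *_A (A \times \mathbb{Z}^k)$, where $A$ is a maximal abelian subgroup of $G$. Writing $G = N \rtimes_{\text{ext}} Q$ with $N$ free and $Q$ torsion-free nilpotent, I would construct a torsion-free nilpotent quotient of $G *_A (A \times \mathbb{Z}^k)$ by taking the pushout of $Q \leftarrow \bar A \to \bar A \times \mathbb{Z}^k$ inside an appropriate extension of $Q$ (using that the image $\bar A$ of $A$ in $Q$ is an isolated abelian subgroup of a torsion-free nilpotent group, so that centralizer extensions remain torsion-free nilpotent), then verify freeness of the kernel via Bass--Serre theory applied to the splitting of the amalgamated product.

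The main obstacle will be precisely this last verification: one must show the kernel of the extended quotient acts on the Bass--Serre tree of the amalgamation with free vertex stabilizers and controlled edge stabilizers, which requires a careful check that $\bar A$ sits inside $Q$ in a way compatible with the extension, and that the new abelian piece $\mathbb{Z}^k$ is absorbed into the torsion-free nilpotent quotient rather than creating new elements in the free kernel. Here I expect to lean on the description of centralizers in limit groups over Droms RAAGs from \cite{preprint1}, together with the fact that maximal abelian subgroups in the tower are closed under extraction of roots, so that their image in the nilpotent quotient remains isolated. Once this is established, the inductive step closes and the proposition follows.
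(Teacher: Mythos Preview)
The paper does not prove this proposition; it is quoted from \cite{preprint1} as a preliminary, so there is no in-paper argument to compare against. Your base case --- that Droms RAAGs themselves are free-by-(torsion-free nilpotent), via the cone/disjoint-union induction and a Bass--Serre argument for the free-product step --- is correct and essentially complete.

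The inductive step along the ICE tower, however, has a genuine gap. The ``pushout of $Q \leftarrow \bar A \to \bar A \times \mathbb{Z}^k$ inside an appropriate extension of $Q$'' is not a construction: the pushout in groups is $Q *_{\bar A}(\bar A\times\mathbb{Z}^k)$, which is essentially never nilpotent, and your claim that ``centralizer extensions remain torsion-free nilpotent'' for isolated abelian subgroups is not a standard fact. The obvious substitute, mapping $G' = G*_A(A\times\mathbb{Z}^k)$ onto $Q\times\mathbb{Z}^k$, does give a torsion-free nilpotent quotient, but its kernel is a graph of free groups with \emph{cyclic} edge groups coming from $A\cap N$, and such groups need not be free: already for $G=F_2$, $A=\langle[a,b]\rangle$, $k=1$, the kernel of the map to $\mathbb{Z}$ contains the genus-two surface group $F_2*_{\langle[a,b]\rangle}F_2$. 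The missing ingredient is the residual torsion-free nilpotence of ICE groups over Droms RAAGs. Once one knows $G'$ is RTFN, a sufficiently deep term of its rational lower central series meets the abelian vertex group $A\times\mathbb{Z}^k$ trivially (every nontrivial element of that abelian group survives in some torsion-free nilpotent quotient), so this normal subgroup acts on the Bass--Serre tree with trivial edge stabilizers and free vertex stabilizers, hence is free. Your outline is on the right track, but the nilpotent quotient has to come from the RTFN filtration of $G'$ itself rather than from an ad hoc enlargement of $Q$.
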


\begin{definition}
Let $H$ be a group and let $Z \subseteq H$ be the centralizer of an element. Then, the group $G= H \ast_{Z} (Z \times \mathbb{Z}^n)$ is said to be obtained from $H$ by an \emph{extension of a centralizer}. An \emph{ICE group over $H$} is a group obtained from $H$ by applying finitely many times the extension of a centralizer construction.
\end{definition}

\begin{theorem}[\cite{Montse}] \label{Limit-subgroup-ICE}
All ICE groups over Droms RAAGs are limit groups over Droms RAAGs. Moreover, a group is a limit group over Droms RAAGs if and only if it is a finitely generated subgroup of an ICE group over a Droms RAAG.
\end{theorem}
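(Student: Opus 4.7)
The plan is to prove the two assertions separately: an induction on the number of centralizer extensions for the forward implication, and a Makanin--Razborov analysis for the characterization of limit groups as subgroups of ICE groups.

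For the first assertion, that every ICE group over a Droms RAAG $G$ is a limit group over $G$, I would induct on the number of extension-of-centralizer steps. The base case is clear, as $G$ is fully residually $G$. For the inductive step, assume $H$ is fully residually $G$ and let $Z$ be the centralizer of an element $z\in H$; I must show that $H':=H\ast_{Z}(Z\times\mathbb{Z}^n)$ is fully residually $G$. The natural tool is a family of retractions $\rho_{k}\colon H'\to H$ obtained by sending the generators of the new $\mathbb{Z}^n$ factor to carefully chosen high powers of elements of $Z$, extending by the identity on $H$. Given a finite set $S\subseteq H'$ written in amalgamated-product normal form, a "big powers" argument in the style of Baumslag shows that for sufficiently large parameters the map $\rho_k$ is injective on $S$. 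Composing with a homomorphism $H\to G$ injective on $\rho_k(S)$ — which exists by the inductive hypothesis — produces the desired separating map $H'\to G$. The one place where the Droms hypothesis really enters is in controlling the normal forms: centralizers in a Droms RAAG are themselves RAAGs of a very restricted shape (they are abelian or split as direct products with a free abelian factor), which is precisely what makes the normal form and the retraction argument go through.

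For the second assertion, the easy direction follows by restriction: if $K$ is finitely generated and contained in a limit group $L$ over $G$, any finite subset $T\subseteq K$ is a finite subset of $L$, so a separating map $L\to G$ on $T$ restricts to one on $K$, showing $K$ is a limit group over $G$. Combined with the first part this handles one implication. The converse — that every limit group $L$ over a Droms RAAG embeds into an ICE group over a Droms RAAG — is the genuine obstacle, and this is where I would quote the machinery of \cite{Montse,Montse2} rather than redo it. The strategy, modeled on the Kharlampovich--Myasnikov and Sela proofs for free groups, is to organize the discriminating sequence of homomorphisms $L\to G$ into a Makanin--Razborov diagram: at each node one replaces $L$ by a quotient coming from a JSJ-type abelian decomposition and a shortening argument that kills the kernel of a limit of the test homomorphisms. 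The leaves of the diagram are finitely generated subgroups of ICE groups, and since taking finitely generated subgroups preserves the class of limit groups, $L$ itself embeds into an ICE group.

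The main obstacle is clearly the second direction, because it requires a full JSJ and shortening theory over Droms RAAGs — Merzlyakov-type results, acylindrical accessibility, and termination of the shortening quotients — none of which are elementary. My proof plan would therefore import these tools from \cite{Montse,Montse2} and use them essentially as a black box, spending the detailed argument on the inductive extension-of-centralizers step, which is where the retraction and normal-form analysis can be carried out by hand.
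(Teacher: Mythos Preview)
The paper does not give its own proof of this theorem: it is stated as a citation from \cite{Montse} and used as a black box in the preliminaries, with no argument supplied. There is therefore nothing in the paper to compare your proposal against.

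That said, your outline is broadly the right shape for how the result is actually established in \cite{Montse,Montse2}. The first assertion is indeed proved by an induction on the number of centralizer extensions together with a Baumslag-style big-powers/retraction argument; the second by embedding a limit group into an ICE group via the Makanin--Razborov / JSJ machinery developed there. One point to be careful about in your sketch of the first part: the retractions $\rho_k$ you describe send the new $\mathbb{Z}^n$ generators to powers of elements of $Z$, but for the big-powers argument to go through you need not just the structure of centralizers in the base Droms RAAG $G$, but a discrimination/big-powers property for the intermediate ICE group $H$ itself (since the normal forms live in $H\ast_Z(Z\times\mathbb{Z}^n)$ and you must separate them already in $H$ before pushing down to $G$). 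This is handled in \cite{Montse} but is worth flagging, as it is the place where the argument is genuinely more delicate than in the free-group case.
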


Theorem \ref{Limit-subgroup-ICE} implies the following.

\begin{lemma} \label{resolution} 
Let $\Gamma$ be a limit group over Droms RAAGs. Then, the trivial $\mathbb{Q}[\Gamma]$-module $\mathbb{Q}$ ( resp. the trivial $\mathbb{Z}[\Gamma]$-module $\mathbb{Z}$) has a free resolution with finitely generated modules and of finite length.
\end{lemma}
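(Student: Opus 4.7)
The plan is to establish that $\Gamma$ is of type $FP$ over $\mathbb{Z}$, which by definition amounts to the existence of a finite-length resolution of the trivial module by finitely generated free $\mathbb{Z}\Gamma$-modules; the corresponding statement over $\mathbb{Q}$ then follows by applying $-\otimes_{\mathbb{Z}}\mathbb{Q}$ term by term. I would invoke the standard equivalence ``type $FP$ = type $FP_\infty$ plus finite cohomological dimension''. Type $FP_\infty$ for a limit group over Droms RAAGs is already known by the work in \cite{Montse}, so the task reduces to verifying $\mathrm{cd}_{\mathbb{Z}}(\Gamma) < \infty$.

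By Theorem \ref{Limit-subgroup-ICE}, $\Gamma$ embeds as a finitely generated subgroup of an ICE group $G$ over a Droms RAAG $G_0$, and since $\mathrm{cd}_{\mathbb{Z}}(\Gamma) \leq \mathrm{cd}_{\mathbb{Z}}(G)$, it suffices to show that $G$ itself has finite cohomological dimension. I would proceed by induction on the number $k$ of centralizer extensions used to build $G$ from $G_0$. For $k = 0$, $G = G_0$ is a Droms RAAG and the Salvetti complex provides a finite $K(G_0,1)$; in particular $G_0$ is of type $FP$. For the inductive step, write
\[ G_{k+1} = H *_Z (Z \times \mathbb{Z}^n), \]
where $H$ is an ICE group built in $k$ steps and $Z$ is the centralizer of an element of $H$. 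Combining the inductively-obtained finite free resolutions of $H$, $Z$ and $Z \times \mathbb{Z}^n$ via the Mayer--Vietoris short exact sequence of $\mathbb{Z}G_{k+1}$-modules
\[
0 \longrightarrow \mathbb{Z}G_{k+1}\!\otimes_{\mathbb{Z}Z}\!\mathbb{Z} \longrightarrow \mathbb{Z}G_{k+1}\!\otimes_{\mathbb{Z}H}\!\mathbb{Z} \;\oplus\; \mathbb{Z}G_{k+1}\!\otimes_{\mathbb{Z}(Z\times \mathbb{Z}^n)}\!\mathbb{Z} \longrightarrow \mathbb{Z} \longrightarrow 0
\]
attached to the amalgamation, and splicing, yields a finite free resolution of $\mathbb{Z}$ over $\mathbb{Z}G_{k+1}$ by finitely generated modules. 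Hence $G_{k+1}$ is of type $FP$ and in particular has finite cohomological dimension.

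The essential technical input that keeps the induction running is control of the centralizer $Z$ at each stage: one needs $Z$ itself to be of type $FP$ with finite cohomological dimension. In a Droms RAAG centralizers are free abelian of finite rank, and this property is preserved under the ICE construction, so at every stage $Z$ is free abelian of finite rank, hence poly-$\mathbb{Z}$ and of type $FP$. This centralizer structure is exactly the point at which the combinatorial characterization of Droms RAAGs (no square and no line of length three as full subgraphs) is used; its propagation through the tower of centralizer extensions is what I expect to be the main obstacle, and it is presumably the content already exploited in \cite{preprint1} and \cite{Montse}. Once this structural fact is in place, the cohomological-dimension bookkeeping, the Mayer--Vietoris splicing, and the final descent from $G$ to the subgroup $\Gamma$ are all straightforward.
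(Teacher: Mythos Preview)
Your overall strategy---reduce to ``type $FP$ $=$ type $FP_\infty$ plus finite cohomological dimension'', take $FP_\infty$ from \cite{Montse}, and bound $\mathrm{cd}_{\mathbb{Z}}$ by embedding $\Gamma$ into an ICE group via Theorem~\ref{Limit-subgroup-ICE} and running Mayer--Vietoris up the tower of centralizer extensions---is precisely the content the paper is compressing into the single sentence ``Theorem~\ref{Limit-subgroup-ICE} implies the following.'' So the approach matches.

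There is, however, one factual slip worth flagging: centralizers in Droms RAAGs are \emph{not} in general free abelian. The path $P_3$ on vertices $a,b,c$ is a Droms graph, the associated RAAG is $\mathbb{Z}\times F_2$ with $b$ central, and the centralizer of $b$ is the whole (non-abelian) group. Fortunately this does not damage your argument. Your first paragraph only requires $\mathrm{cd}_{\mathbb{Z}}(G)<\infty$, and for that the Mayer--Vietoris step needs merely $\mathrm{cd}(Z)<\infty$; this is automatic since $Z\leq H$ and $\mathrm{cd}(H)<\infty$ by induction, whence also $\mathrm{cd}(Z\times\mathbb{Z}^n)\leq \mathrm{cd}(Z)+n<\infty$. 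You only need the stronger statement ``$Z$ is of type $FP$'' because you insist on producing a finite free resolution of $\mathbb{Z}$ over $\mathbb{Z}G_{k+1}$ with finitely generated modules at each inductive stage, but that overshoots what your own outline in the first paragraph demands. If you do want it, the correct route is: $Z$ is a finitely generated subgroup of an ICE group, hence itself a limit group over a Droms RAAG, hence $FP_\infty$ by \cite{Montse}, and combined with $\mathrm{cd}(Z)<\infty$ this gives type $FP$---no appeal to free-abelianness needed.
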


\begin{lemma} \label{Euler-char} \cite{preprint1} Let $G$ be a Droms RAAG and $\Gamma$ a limit group over $G$. Then, $\chi(\Gamma)\leq 0$. Furthermore, $\chi(\Gamma)=0$ if and only if
 $\Gamma$ has non-trivial center.
\end{lemma}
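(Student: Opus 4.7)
The plan is to combine the ICE structure from Theorem \ref{Limit-subgroup-ICE} with induction on the number of centralizer extensions, treating a base Droms RAAG by a separate induction on its iterated-product decomposition.

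For a Droms RAAG $G$, by \cite{Droms2} one can write $G$ as built from $\mathbb{Z}$ by iterated free products and direct products with $\mathbb{Z}$; I would induct on this decomposition. The base case $G=\mathbb{Z}$ satisfies $\chi(\mathbb{Z})=0$ and has non-trivial centre. If $G=A\ast B$ with $A,B$ nontrivial satisfying the statement, then $\chi(G)=\chi(A)+\chi(B)-1<0$ and $Z(G)=1$ by the normal form theorem for free products. If $G=A\times\mathbb{Z}$, then $\chi(G)=\chi(A)\cdot\chi(\mathbb{Z})=0$ and $\{1\}\times\mathbb{Z}\subseteq Z(G)$ is nontrivial.

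For an ICE group $E=H\ast_Z(Z\times\mathbb{Z}^n)$ with $Z=C_H(g)$ and $g\neq 1$, Mayer--Vietoris gives $\chi(E)=\chi(H)+\chi(Z\times\mathbb{Z}^n)-\chi(Z)$. Since $n\geq 1$, the factor $\mathbb{Z}^n$ forces $\chi(Z\times\mathbb{Z}^n)=0$; and since $g\in Z(Z)$, the inductive hypothesis applied to $Z$ (viewed as a limit group over Droms RAAGs with nontrivial centre, and of lower hierarchical depth) gives $\chi(Z)=0$, whence $\chi(E)=\chi(H)$. For the centre, elements of $Z(E)$ must lie in the amalgamated subgroup $Z$ and centralize all of $H$, giving $Z(E)=Z\cap Z(H)$; but every element of $Z(H)$ commutes with $g$ and hence lies in $C_H(g)=Z$, so $Z(E)=Z(H)$. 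Thus both properties are simultaneously preserved under centralizer extension, closing the induction on ICE groups.

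Finally, for a general limit group $\Gamma$ over Droms RAAGs, Theorem \ref{Limit-subgroup-ICE} embeds $\Gamma$ as a finitely generated subgroup of an ICE group $E$. The action of $\Gamma$ on the Bass--Serre tree of the outermost centralizer amalgamation yields a finite graph-of-groups decomposition of $\Gamma$ whose vertex and edge groups are intersections of $\Gamma$ with conjugates of $H$, $Z\times\mathbb{Z}^n$ and $Z$; these pieces are again finitely generated subgroups of ICE groups of strictly lower hierarchical depth (or are finitely generated free abelian), so by induction they satisfy the lemma. Additivity of Euler characteristics over the graph of groups, combined with Lemma \ref{resolution} ensuring that $\chi(\Gamma)$ is well defined, then delivers $\chi(\Gamma)\leq 0$. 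The main obstacle is the equivalence with non-triviality of the centre at this last step: non-triviality of the centre is not preserved under passage to arbitrary subgroups, so I would argue that $\chi(\Gamma)=0$ forces every Euler characteristic appearing in the additivity formula to vanish, and then use compatibility along the edge groups to propagate a central $\mathbb{Z}$ direct factor from one of the vertex groups back up to $\Gamma$ itself.
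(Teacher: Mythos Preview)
The paper does not prove this lemma: it is imported from \cite{preprint1} with no argument given, so there is no proof in the paper to compare your attempt against.

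As for your sketch itself, the first two stages are essentially sound once the induction is organised on the \emph{height} of a limit group rather than on the ICE level alone; this is needed so that the centralizer $Z=C_H(g)$ in your step~2, being a finitely generated subgroup of the lower-height group $H$, is genuinely covered by the inductive hypothesis. Without that reorganisation your step~2 is circular: you invoke the conclusion for $Z$, which is a general limit group, while you have only established it for ICE groups of lower level.

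The real gap is in the last stage. Two issues:
\begin{itemize}
\item When $\Gamma$ acts on the Bass--Serre tree of $E=H\ast_Z(Z\times\mathbb{Z}^n)$ you assert that the induced vertex and edge groups $\Gamma\cap H^x$, $\Gamma\cap(Z\times\mathbb{Z}^n)^x$, $\Gamma\cap Z^x$ are finitely generated. This is not automatic for finitely generated subgroups of amalgams; it requires coherence/Noetherian-type input specific to ICE groups over Droms RAAGs, which you should cite explicitly.
\item More seriously, the equivalence $\chi(\Gamma)=0\Leftrightarrow Z(\Gamma)\neq 1$ does not follow from the additivity you describe. In $\chi(\Gamma)=\sum_v\chi(\Gamma_v)-\sum_e\chi(\Gamma_e)$, the edge groups $\Gamma_e=\Gamma\cap Z^x$ need not have non-trivial centre (no conjugate of $g$ is forced to lie in $\Gamma$), so you cannot assume $\chi(\Gamma_e)=0$; if some $\Gamma_e$ is trivial then $\chi(\Gamma_e)=1$ and your sign analysis collapses. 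Your proposed remedy --- ``propagate a central $\mathbb{Z}$ direct factor from a vertex group to $\Gamma$'' --- is not an argument: a vertex group with non-trivial centre in a graph of groups does not in general force a non-trivial centre in the fundamental group. This direction needs a genuinely structural argument about limit groups over Droms RAAGs, not Euler-characteristic bookkeeping alone.
\end{itemize}
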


By \cite{Baumslag}, groups that are residually Droms RAAGs are precisely finitely generated subgroups of the direct product of limit groups over Droms RAAGs. Thus, the study of a group that is residually Droms RAAG reduces to the study of specific subdirect products. The study of homological properties of subdirect products of free groups was initiated by G. Baumslag and J. Roseblade in \cite{Roseblade}. Later on, the theory was developed by M. Bridson, J. Howie, C. F. Miller and H. Short in a sequence of papers that culminated in \cite{Bridson}, \cite{Bridson2}.
In \cite{Jone}, following  \cite{Bridson}, J. Lopez de Gamiz Zearra generalised  the results concerning subgroups of direct products of limit groups over free groups to the case of subgroups of direct products of  limit groups over Droms RAAGs.

\begin{theorem}\cite[Theorem 3.1]{Jone}\label{Theorem 3.1}
If $\Gamma_{1},\dots, \Gamma_{n}$ are limit groups over Droms RAAGs and $S$ is a subgroup of $\Gamma_{1}\times \cdots \times \Gamma_{n}$ of type $FP_{n}(\mathbb{Q})$, then $S$ is virtually a direct product of limit groups over Droms RAAGs.
\end{theorem}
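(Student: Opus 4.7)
I propose to prove Theorem \ref{Theorem 3.1} by adapting to the Droms-RAAG setting the induction-on-$n$ argument used for limit groups over free groups by Bridson--Howie--Miller--Short in \cite{Bridson} and \cite{Bridson2}, with the structural results recorded earlier in this section taking the place of their classical analogues. As a first step one reduces to the case where $S$ is a full subdirect product: the projections $p_i(S)$ are finitely generated (any $FP_n(\mathbb{Q})$ group with $n \geq 1$ is finitely generated) and hence themselves limit groups over Droms RAAGs by Theorem \ref{Limit-subgroup-ICE}. The base case $n=1$ is then immediate from Theorem \ref{Limit-subgroup-ICE}.

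For the inductive step I would begin by stripping off the abelian direct factors coming from the centers: by Proposition \ref{free-by-(torsion-free nilpotent)} each $\Gamma_i$ is free-by-(torsion-free nilpotent), so $Z(\Gamma_i)$ embeds in a finitely generated torsion-free nilpotent group and is itself finitely generated and torsion-free abelian. After passing to a finite-index subgroup of $S$, one can realise each $\Gamma_i$ as a genuine direct product $Z_i \times \Gamma_i^0$ with $\Gamma_i^0$ centerless, and then absorb the $Z_i$ into a single $\mathbb{Z}^N$ direct summand of the ambient group. This reduces the problem to showing that when every $\Gamma_i$ has trivial center, a full subdirect product $S < \Gamma_1 \times \cdots \times \Gamma_n$ of type $FP_n(\mathbb{Q})$ has finite index in $\Gamma_1\times\cdots\times\Gamma_n$. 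To prove this, project $S$ onto $\Gamma_n$ with kernel $K \leq \Gamma_1 \times \cdots \times \Gamma_{n-1}$; a Lyndon--Hochschild--Serre spectral-sequence argument, applied with the finitely generated finite-length $\mathbb{Q}\Gamma_n$-resolution of $\mathbb{Q}$ supplied by Lemma \ref{resolution}, is intended to transfer the $FP_n(\mathbb{Q})$ hypothesis on $S$ to an $FP_{n-1}(\mathbb{Q})$ conclusion on $K$. By induction $K$ is then virtually a direct product of limit groups over Droms RAAGs, so $p_i(K)$ has finite index in $\Gamma_i$ for each $i<n$. By Lemma \ref{Euler-char} every $\chi(\Gamma_i)$ is strictly negative, and multiplicativity of the rational Euler characteristic under direct products together with its behaviour under finite-index inclusion forces $[\Gamma_1 \times \cdots \times \Gamma_n : S]<\infty$; a finite-index subgroup of such a direct product is itself virtually a direct product of limit groups over Droms RAAGs, yielding the desired conclusion for $S$.

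The main obstacle I anticipate is exactly the spectral-sequence step that extracts $FP_{n-1}(\mathbb{Q})$ for the kernel $K$ from the $FP_n(\mathbb{Q})$ hypothesis on $S$: in the free-limit-group case this relies on delicate structural input such as cyclic-edge JSJ hierarchies, and in the Droms-RAAG setting one must replace those tools with the ICE-group description of Theorem \ref{Limit-subgroup-ICE} and the free-by-(torsion-free nilpotent) structure of Proposition \ref{free-by-(torsion-free nilpotent)}. A secondary, more bookkeeping-level, difficulty is the splitting of the centers as honest direct factors after passing to a finite-index subgroup, which needs a Droms-RAAG analogue of the classical fact that a limit group with non-trivial center is virtually a product of an abelian factor and a centerless factor.
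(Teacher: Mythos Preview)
The paper does not prove this theorem at all: it is quoted verbatim from \cite[Theorem~3.1]{Jone} and used as a black box, so there is no ``paper's own proof'' to compare with beyond the citation. What can be said is that the argument in \cite{Jone} (following \cite{Bridson}) does \emph{not} proceed along the lines you sketch; rather, it goes through the companion result recorded here as Theorem~\ref{Theorem 8.1}: in the centerless case one shows directly that if $S$ has infinite index then some finite-index subgroup $S_0$ has $\dim_{\mathbb{Q}} H_j(S_0,\mathbb{Q})=\infty$ for some $j\le n$, which is incompatible with $S$ being $FP_n(\mathbb{Q})$.

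Your proposal has a genuine gap precisely where you anticipated it, and it is fatal rather than merely delicate. There is no mechanism by which the Lyndon--Hochschild--Serre spectral sequence for $1\to K\to S\to \Gamma_n\to 1$ transfers the property $FP_n(\mathbb{Q})$ of $S$ to $FP_{n-1}(\mathbb{Q})$ for $K$: finiteness of the abutment $H_{p+q}(S,\mathbb{Q})$ imposes no finiteness on the individual $E_2^{p,q}=H_p(\Gamma_n,H_q(K,\mathbb{Q}))$, let alone on $H_q(K,\mathbb{Q})$ itself, and already the elementary example $K=[F_2,F_2]\lhd F_2$ (with $F_2$ of type $FP_\infty$ and $K$ not even finitely generated) shows that $FP$-properties do not descend to normal subgroups with $FP_\infty$ quotient. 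The finite free $\mathbb{Q}\Gamma_n$-resolution from Lemma~\ref{resolution} does not help here: it controls the $p$-direction of the spectral sequence, not the coefficient modules $H_q(K,\mathbb{Q})$. A secondary issue is that even if $K$ were $FP_{n-1}(\mathbb{Q})$, it need not be a \emph{full} subdirect product of $\Gamma_1\times\cdots\times\Gamma_{n-1}$ (only $p_i(S)=\Gamma_i$ is given, not $p_i(K)=\Gamma_i$), so your inductive hypothesis would not apply to it as stated. Finally, the Euler-characteristic step is superfluous: once $K=S\cap(\Gamma_1\times\cdots\times\Gamma_{n-1})$ has finite index there and $p_n(S)=\Gamma_n$, finite index of $S$ in the full product is immediate without any appeal to $\chi$.
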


\begin{theorem}\cite[Theorem 8.1]{Jone}\label{Theorem 8.1}
Let $\Gamma_{1},\dots,\Gamma_{n}$ be limit groups over Droms RAAGs where each $\Gamma_i$ has trivial center and let $S< \Gamma_{1}\times \cdots \times \Gamma_{n}$ be a finitely generated full subdirect product. Then either:\\[3pt]
(1) $S$ is of finite index; or\\[3pt]
(2) $S$ is of infinite index and has a finite index subgroup $S_{0}<S$ such that $H_{j}(S_{0}, \mathbb{Q})$ has infinite dimension for some $j\leq n$.
\end{theorem}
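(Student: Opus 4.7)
My plan is to argue the contrapositive: assuming every $H_j(S_0, \mathbb{Q})$ is finite-dimensional for all finite-index subgroups $S_0 \le S$ and all $j \le n$, I would deduce that $S$ has finite index in $G := \Gamma_1 \times \cdots \times \Gamma_n$. The base case $n=1$ is vacuous: a full subdirect product of $\Gamma_1$ equals $\Gamma_1$ itself.

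The first step is to promote the finite-dimensional homology hypothesis to the statement that $S$ is of type $FP_n(\mathbb{Q})$. Since each $\Gamma_i$ is a limit group over a Droms RAAG, Lemma \ref{resolution} gives a finite-length free resolution of $\mathbb{Q}$ over $\mathbb{Q}[\Gamma_i]$ by finitely generated modules; the tensor product of these resolutions gives such a resolution for $G$, so $G$ is of type $FP_\infty$ over $\mathbb{Q}$. Combined with finite generation of $S$ and a partial-resolution-splicing argument using that every finite-index subgroup has finite-dimensional rational homology in every degree up to $n$, this yields $FP_n(\mathbb{Q})$ for $S$.

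Once $S$ is known to be of type $FP_n(\mathbb{Q})$, Theorem \ref{Theorem 3.1} provides a finite-index subgroup $S_0 \le S$ that splits as a direct product of limit groups over Droms RAAGs: $S_0 = \Lambda_1 \times \cdots \times \Lambda_k$. Since $p_i(S) = \Gamma_i$ and $S_0$ is of finite index in $S$, each projection $p_i(S_0) \le \Gamma_i$ has finite index. By hypothesis each $\Gamma_i$ has trivial center, hence by Lemma \ref{Euler-char} satisfies $\chi(\Gamma_i) \neq 0$; in particular $\Gamma_i$ is non-abelian and admits no nontrivial direct-product decomposition (a nontrivial direct factor would centralize its complement). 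Using multiplicativity of $\chi$ and the non-splitting property of each $\Gamma_i$, the direct product decomposition of $S_0$ must align factor-by-factor with that of $G$, which forces $k = n$ and $[G : S_0] < \infty$, and hence $[G : S] < \infty$.

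The main obstacle I anticipate is the upgrade from \emph{all finite-index subgroups having finite-dimensional rational homology in degrees $\le n$} to \emph{$S$ is of type $FP_n(\mathbb{Q})$}, since $FP_n$ is a property of resolutions rather than of individual homology groups. An alternative strategy avoiding this step would be to run the induction directly: project $S$ onto $\Gamma_n$ with kernel $K = S \cap (\Gamma_1 \times \cdots \times \Gamma_{n-1})$; verify via a Goursat-type index comparison that $K$ has infinite index in $\Gamma_1 \times \cdots \times \Gamma_{n-1}$ whenever $S$ has infinite index in $G$; apply the inductive hypothesis (with care needed when $K$ fails to be finitely generated, which would itself already give infinite-dimensional $H_1(K, \mathbb{Q})$); and then transport infinite-dimensional homology up to $S$ through the Lyndon--Hochschild--Serre spectral sequence of $1 \to K \to S \to \Gamma_n \to 1$, after passing to a finite-index subgroup on which the relevant $E_2$-term survives.
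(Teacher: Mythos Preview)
This theorem is quoted from \cite{Jone} as a preliminary result; the present paper offers no proof of it, so there is no argument in the paper to compare your proposal against.

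On the substance of your proposal: your first route has a second problem beyond the upgrade gap you already flag. Invoking Theorem~\ref{Theorem 3.1} here is almost certainly circular. In \cite{Jone}, as in its predecessor \cite{Bridson} for limit groups over free groups, the statement that a type-$FP_n(\mathbb{Q})$ full subdirect product is virtually a direct product is \emph{deduced from} the homology dichotomy you are trying to prove, not the other way around. Moreover, the ``partial-resolution-splicing'' you allude to does not exist as a general principle: finite-dimensional rational homology in degrees $\le n$ for all finite-index subgroups is genuinely weaker than $FP_n(\mathbb{Q})$, and the equivalence in this setting (cf.\ Theorem~\ref{Theorem 10.1} for $n=2$) is itself one of the main outputs of the theory, resting on the dichotomy.

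Your second, inductive strategy is the correct shape and is essentially how the result is proved in \cite{Jone}, following \cite{Bridson}: project off one factor, study the kernel inside the remaining product, and push infinite-dimensional homology up through the Lyndon--Hochschild--Serre spectral sequence of $1\to K\to S\to\Gamma_n\to 1$. The genuine work lies exactly where you locate it --- controlling the case when $K$ is not finitely generated, and arranging (via the free-by-(torsion-free nilpotent) structure of Proposition~\ref{free-by-(torsion-free nilpotent)} and passage to a suitable finite-index subgroup) that the relevant $E_2$-term is not killed by differentials.
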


We recall that an embedding $S  \hookrightarrow \Gamma_{0}\times \cdots \times \Gamma_{n}$ of a finitely generated group that is residually Droms RAAG as a full subdirect product of limit groups over Droms RAAGs is \emph{neat} if $\Gamma_{0}$ is abelian (possibly trivial), $S \cap \Gamma_{0}$ is of finite index in $\Gamma_{0}$ and $\Gamma_{i}$ has trivial center for $i\in \{1,\dots,n\}$.

\begin{theorem}\cite[Theorem 10.1]{Jone} \label{Theorem 10.1}
Let $S$ be a finitely generated group that is residually Droms RAAG. The following are equivalent:\\[3pt]
(1) $S$ is finitely presentable;\\[3pt]
(2) $S$ is of type $FP_{2}(\mathbb{Q})$;\\[3pt]
(3) $\dim_{\mathbb{Q} } H_{2}(S_{0},\mathbb{Q})$ is finite for all subgroups $S_{0}<S$ of finite index;\\[3pt]
(4) there exists a neat embedding $S \hookrightarrow \Gamma_{0}\times \cdots \times \Gamma_{n}$ into a product of limit groups over Droms RAAGs such that the image of $S$ under the projection to $\Gamma_{i}\times \Gamma_{j}$ has finite index for $0\leq i<j \leq n$;\\[3pt]
(5) for every neat embedding $S \hookrightarrow \Gamma_{0}\times \cdots \times \Gamma_{n}$ into a product of limit groups over Droms RAAGs the image of $S$ under the projection to $\Gamma_{i}\times \Gamma_{j}$ has finite index for $0\leq i<j \leq n$.
\end{theorem}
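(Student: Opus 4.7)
I would prove the five conditions equivalent by running the cycle $(1)\Rightarrow(2)\Rightarrow(3)\Rightarrow(5)\Rightarrow(4)\Rightarrow(1)$. The implications $(1)\Rightarrow(2)\Rightarrow(3)$ are standard homological algebra: finite presentability implies type $FP_2(\mathbb{Q})$, which passes to finite-index subgroups and forces $\dim_{\mathbb{Q}}H_2(S_0,\mathbb{Q})<\infty$. The direction $(5)\Rightarrow(4)$ is immediate once one produces any neat embedding, which is guaranteed by \cite{Baumslag}. For $(4)\Rightarrow(1)$ I would invoke the VSP Criterion of Bridson--Howie--Miller--Short stated in the introduction: the factors $\Gamma_i$ in a neat embedding are finitely presented (limit groups over Droms RAAGs are so by \cite{Montse}, and $\Gamma_0$ is free abelian), so virtual surjection on pairs delivers finite presentability of $S$ directly.

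The core of the proof is the implication $(3)\Rightarrow(5)$. Fix an arbitrary neat embedding $S\hookrightarrow\Gamma_0\times\cdots\times\Gamma_n$ and a pair of indices $0\leq i<j\leq n$; the goal is to show that $p_{i,j}(S)$ has finite index in $\Gamma_i\times\Gamma_j$. When $i=0$ the claim is essentially formal: since $S\cap\Gamma_0$ has finite index in $\Gamma_0$ and $p_j(S)=\Gamma_j$, one gets
\[ [\Gamma_0\times\Gamma_j:p_{0,j}(S)]\;\leq\;[\Gamma_0:S\cap\Gamma_0]\;<\;\infty. \]
When $i,j\geq 1$, the projection $p_{i,j}(S)$ is a full subdirect product in $\Gamma_i\times\Gamma_j$ with both factors of trivial center, so Theorem \ref{Theorem 8.1} applies. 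If $p_{i,j}(S)$ had infinite index, one would obtain a finite-index subgroup $T<p_{i,j}(S)$ with $\dim_{\mathbb{Q}}H_k(T,\mathbb{Q})=\infty$ for some $k\leq 2$; since $T$ is finitely generated as a subgroup of the finitely generated group $p_{i,j}(S)$, necessarily $k=2$.

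The remaining task, and the main technical obstacle, is to lift this infinite-dimensionality upstairs. Setting $S_1:=p_{i,j}^{-1}(T)\cap S$, a finite-index subgroup of $S$, one needs $\dim_{\mathbb{Q}}H_2(S_1,\mathbb{Q})=\infty$, which would contradict $(3)$. My plan is to exploit the five-term exact sequence in rational homology for the short exact sequence $1\to K\to S_1\to T\to 1$, where $K=S_1\cap\prod_{k\neq i,j}\Gamma_k$, together with finite-dimensionality of $H_1(S_1,\mathbb{Q})$. The hard part is controlling the coinvariant term $H_1(K,\mathbb{Q})_T$, which encodes how $K$ is positioned inside the residual direct product of limit groups over Droms RAAGs. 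The natural device is an induction on the number of factors, repeatedly applying Theorem \ref{Theorem 8.1} to strictly smaller projections, until the assertion reduces to a case that can be closed by the inductive hypothesis on $H_2$ of the smaller subdirect product.
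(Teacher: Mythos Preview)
The paper does not contain a proof of this statement: Theorem~\ref{Theorem 10.1} is quoted verbatim from \cite[Theorem~10.1]{Jone} in the preliminaries section and is used as a black box. There is therefore no proof in the present paper to compare your attempt against.

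That said, your overall strategy is the standard one and mirrors the Bridson--Howie--Miller--Short template on which \cite{Jone} is modelled. The implications $(1)\Rightarrow(2)\Rightarrow(3)$, $(5)\Rightarrow(4)$, and $(4)\Rightarrow(1)$ are correct as you describe them, and your treatment of the case $i=0$ in $(3)\Rightarrow(5)$ is fine. Your outline for $i,j\geq 1$ correctly isolates the real obstacle, namely the finite-dimensionality of the coinvariants $H_1(K,\mathbb{Q})_T$ in the five-term sequence, but the inductive plan you sketch is too vague to evaluate: you would need to say precisely how Theorem~\ref{Theorem 8.1} applied to smaller projections yields control over $H_0(T,H_1(K,\mathbb{Q}))$ when $K$ is typically neither finitely generated nor a full subdirect product in the remaining factors, and how the induction actually terminates. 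One minor wording issue: you justify that $T$ is finitely generated ``as a subgroup of the finitely generated group $p_{i,j}(S)$'', which on its face is not a valid reason; the correct justification is that $T$ has \emph{finite index} in $p_{i,j}(S)$.
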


The previous result is generalized in \cite{preprint1}, inspired by \cite[Theorem 11]{Desi}.

\begin{theorem} \cite{preprint1}
 Let $G_1, \dots,   G_m$ be limit groups over Droms RAAGs such that each $G_i$ has  trivial center and let \[S <  G_1 \times \cdots \times G_m\] be a finitely generated full subdirect product. Suppose further that, for some fixed number $s \in \{ 2, \dots, m \}$, for every subgroup $S_0$ of finite index in $S$ the homology group $H_i (S_0 , \mathbb{Q})$ is finite dimensional (over $\mathbb{Q}$) for all $ i \leq  s.$ Then for every canonical projection
\[p_{j_1, \dots,  j_s} \colon S \mapsto G_{j_1} \times \cdots \times G_{ j_ s}\]
the index of $p_{j_1, \dots, j_s} (S)$ in  $G_{j_1} \times \cdots \times G_{ j_s}$  is finite.

In particular, if $S$ is of type $FP_s$ over $\mathbb{Q}$, then for every canonical projection $p_{j_1, \dots,  j_s}$ the index of $p_{j_1, \dots, j_s} (S)$ in  $G_{j_1} \times \cdots \times G_{ j_s}$  is finite.
\end{theorem}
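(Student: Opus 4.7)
The plan is to argue by contradiction, combining Theorem~\ref{Theorem 8.1} with the Lyndon--Hochschild--Serre spectral sequence, and to proceed by induction on the number of factors $m$ with $s$ fixed.

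For the base case $m=s$, the canonical projection is, after relabeling, the identity of $G_1\times\cdots\times G_s$, so the statement reduces to $[G_1\times\cdots\times G_s:S]<\infty$. If this failed, Theorem~\ref{Theorem 8.1} would produce a finite-index subgroup $S_0\leq S$ with $\dim_{\mathbb{Q}}H_j(S_0,\mathbb{Q})=\infty$ for some $j\leq m=s$, directly contradicting the hypothesis on $S$.

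For the inductive step $m>s$, fix a canonical projection $p=p_{j_1,\ldots,j_s}$; after relabeling take $p=p_{1,\ldots,s}$ and suppose for contradiction that $T:=p(S)$ has infinite index in $G_1\times\cdots\times G_s$. Since $T$ is itself a finitely generated full subdirect product of limit groups over Droms RAAGs with trivial centers, Theorem~\ref{Theorem 8.1} furnishes a finite-index subgroup $T_0\leq T$ with $\dim_{\mathbb{Q}}H_j(T_0,\mathbb{Q})=\infty$ for some $j\leq s$. Setting $S_0:=p^{-1}(T_0)$ (finite-index in $S$) and $N:=\ker(p|_{S_0})=S_0\cap(G_{s+1}\times\cdots\times G_m)$, the short exact sequence $1\to N\to S_0\to T_0\to 1$ gives the first-quadrant homological LHS spectral sequence
\[E^2_{p,q}=H_p\bigl(T_0,H_q(N,\mathbb{Q})\bigr)\ \Longrightarrow\ H_{p+q}(S_0,\mathbb{Q}).\]
No differentials enter the bottom row ($E^r_{j+r,1-r}=0$ for $r\geq 2$), so $E^\infty_{j,0}\subseteq E^2_{j,0}=H_j(T_0,\mathbb{Q})$ is the intersection of the kernels of the outgoing differentials $d_r\colon E^r_{j,0}\to E^r_{j-r,r-1}$ for $2\leq r\leq j$; moreover it embeds as a subquotient of the finite-dimensional $H_j(S_0,\mathbb{Q})$ and is therefore finite-dimensional. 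Hence the quotient $E^2_{j,0}/E^\infty_{j,0}$ is infinite-dimensional, and this quotient is built from subquotients of the target groups $E^2_{j-r,r-1}=H_{j-r}\bigl(T_0,H_{r-1}(N,\mathbb{Q})\bigr)$.

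The hard part is to show that each of these target groups is in fact finite-dimensional, which contradicts the previous observation and completes the proof. Here one exploits the structural constraints on $N$: it lies in $G_{s+1}\times\cdots\times G_m$, each of its coordinate projections to a factor $G_k$ ($k>s$) is a normal subgroup of the limit group $G_k$ with trivial center, and each $G_k$ admits a finite free resolution of $\mathbb{Q}$ over $\mathbb{Q}[G_k]$ by Lemma~\ref{resolution}. Invoking the inductive hypothesis on auxiliary projections of $S$ to products with fewer than $m$ factors (and Theorem~\ref{Theorem 3.1} where useful) controls $H_{r-1}(N,\mathbb{Q})$ as a $\mathbb{Q}[T_0]$-module; combined with finite-dimensionality of $H_i(S_0,\mathbb{Q})$ for $i<j$ and a dévissage along composition series of the coefficient modules, one obtains the required finiteness of $H_{j-r}(T_0,H_{r-1}(N,\mathbb{Q}))$, yielding the contradiction. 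The ``In particular'' statement is immediate, since type $FP_s(\mathbb{Q})$ is inherited by finite-index subgroups and so implies the main homological hypothesis.
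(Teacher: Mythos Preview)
The paper does not actually prove this theorem; it is quoted from \cite{preprint1} as a preliminary result. So there is no in-paper proof to compare against, and your attempt must stand on its own.

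Your overall strategy (contradict Theorem~\ref{Theorem 8.1} via the LHS spectral sequence for $1\to N\to S_0\to T_0\to 1$) is reasonable, and the base case $m=s$ is fine. The inductive step, however, has a genuine gap precisely where you label it ``the hard part''. You assert that the targets $E^2_{j-r,r-1}=H_{j-r}(T_0,H_{r-1}(N,\mathbb{Q}))$ are finite-dimensional by ``invoking the inductive hypothesis on auxiliary projections of $S$ to products with fewer than $m$ factors'' together with a ``d\'evissage along composition series'', but neither of these phrases is backed by an argument. The inductive hypothesis concerns subdirect products with fewer factors that \emph{themselves} satisfy the homology finiteness hypothesis; a projection of $S$ does not inherit that hypothesis from $S$, so there is no evident way to feed such a projection back into the induction. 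Likewise, $N=S_0\cap(G_{s+1}\times\cdots\times G_m)$ need not be finitely generated, and you give no mechanism for why $H_{r-1}(N,\mathbb{Q})$ should be a well-behaved $\mathbb{Q}[T_0]$-module (finitely generated, or admitting a composition series with tractable factors). Without this, the spectral-sequence bookkeeping does not yield a contradiction.

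The proof in \cite{Desi} (and its generalisation in \cite{preprint1}) does not run an induction of this shape. It exploits the free-by-(torsion-free nilpotent) structure of each $G_i$ (Proposition~\ref{free-by-(torsion-free nilpotent)}) to produce free normal subgroups $F_i\lhd G_i$ with $F_1\times\cdots\times F_m$ contained in a finite-index subgroup of $S$, then analyses $H_*(F_1\times\cdots\times F_m,\mathbb{Q})$ via the K\"unneth formula and the Euler-characteristic information of Lemma~\ref{Euler-char}. That machinery is what actually controls the coefficient modules; your sketch does not invoke it, and without it the argument does not close.
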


\section{Preliminaries on the Bieri--Neumann--Strebel--Renz invariants}

Let $G$ be a finitely generated group. By definition, the \emph{character sphere} $S(G)$ is \[S(G) = \frac{Hom (G , \mathbb{R}) \setminus \{ 0 \}}{\sim},\] where $\chi_1 \sim \chi_2$ for $\chi_1, \chi_2 \in Hom(G, \mathbb{R}) \backslash \{0\}$  if there is  $r > 0$ such that  $\chi_1 = r \chi_2$. The equivalence class of  $\chi_1$ is denoted by $[\chi_1]$. Note that $S(G)$ can be identified  with the unit sphere in $\mathbb{R}^{n}$, where $n$ is the torsion-free rank of the abelianization $G\slash G^\prime$. A \emph{  character} of $G$ is a non-trivial group homomorphism $$\chi \colon G \mapsto \mathbb{R} $$
 and $\chi$ is  \emph{discrete} if $\text{im}(\chi) \simeq \mathbb{Z}$.

Let $D$ be an integral domain and let $M$ be a (right)  $DG$-module. By definition, \[\Sigma^n_D(G, M) = \{[\chi] \in S(G) \mid M \textrm{ is of type $FP_n$ as a $DG_{\chi}$-module}\}.\]   When $M=D$ is the trivial $DG$-module, $\Sigma^n_D(G, M) = \Sigma^n_D(G, D)$ is usually denoted by $ \Sigma^n(G, D)$ and  we say that the invariant has coefficients in $D$. Also $\Sigma^n_{\mathbb{Z}}(G, M)$ is normally written as $\Sigma^n(G, M)$.

Suppose that $X$ is a finite generating set for $G$ and let $\Gamma$ be the \emph{Cayley graph} of $G$ with respect to $X$.  Then, the vertex set $V(\Gamma)$ equals $G$ and the edge set $E(\Gamma)$ is $X \times G$, where the edge $(x, g)$ has initial vertex $g$ and terminal vertex $xg$. 
The graph $\Gamma$ is equipped with an obvious $G$-action  given by $(x,g)\cdot h = (x, gh)$ for $ x \in X, g,h \in G$.
 
Let $\chi \colon G \mapsto \mathbb{R}$ be a character and let $\Gamma_{\chi}$ be the subgraph of $\Gamma$ spanned by $G_{\chi} = \{ g \in G \mid \chi(g) \geq 0\}$. Then, by definition, \[\Sigma^1(G) = \{ [\chi] \in S(G) \mid \Gamma_{\chi} \text{ is connected} \}.\]

If $G$ is finitely presented and $\langle X \mid R \rangle$ is a finite presentation of $G$, then $\Gamma$ can be extended to a $2$-complex, called the \emph{Cayley complex}, by gluing $2$-cells at every vertex attached at boundaries with labels that correspond to the elements of the set of relations  $R$. Then, if $\Gamma_{\chi} $ is the subcomplex of $\Gamma$ spanned by $G_{\chi}$, \[ \Sigma^2(G) = \{ [\chi] \in S(G) \mid \text{there exists a finite presentation } G = \langle X \mid R \rangle \text{ such that }\Gamma_{\chi} \text{ is 1-connected} \}.\]
It is important to note that even though the definition of $\Sigma^1$ is independent from the choice of a finite generating set $X$, in the definition of $\Sigma^2$ we cannot require that $\langle X \mid R \rangle$ is any finite presentation of $G$.

The invariant $\Sigma^1(G)$ coincides with $\Sigma^1(G, \mathbb{Z})$. This follows from \cite[(1.3)]{B-Renz}, where it is shown that $\Sigma^1(G, \mathbb{Z})$ coincides with the invariant defined in \cite{B-N-S}. The homotopical invariant $\Sigma^n(G)$ is defined only for groups of homotopical type $F_n$. The definition generalizes the definition of $\Sigma^2(G)$ by substituting the Cayley complex by the $n$-dimensional skeleton of a $K(G,1)$-complex and substituting 1-connectivity with $(n-1)$-connectivity. But since by \cite{Renzthesis} for $n \geq 2$  \[\Sigma^n(G) = \Sigma^2(G) \cap \Sigma^n(G, \mathbb{Z}),\] in order to calculate $\Sigma^n(G)$ it suffices to work with $\Sigma^2(G)$ and $\Sigma^n(G, \mathbb{Z})$.

The importance of the invariants $\Sigma^m(G)$ and $\Sigma^m(G, \mathbb{Z})$ lies in the fact that they control which subgroups of $G$ containing the commutator subgroup are of type $F_m$ and $FP_m$.
 
 \begin{theorem} \label{BRhomotopic} 
\leavevmode
\begin{itemize}
\item[(a)] \cite{Renzthesis} Let $G$ be a group of type $F_n$ and let $N$ be a subgroup of $G$ that contains the commutator subgroup $G^\prime$. Then, $N$ is of type $F_n$ if and only if
	\[S(G,N) = \{ [\chi] \in S(G) \mid \chi(N) = 0 \} \subseteq \Sigma^n(G).\]
	
\item[(b)] \cite{B-Renz}  Let $G$ be a group of type $FP_n$ and let $N$ be a subgroup of $G$ that contains the commutator subgroup $G^\prime$. Then, $N$ is of type $FP_n$ if and only if
	\[S(G,N) = \{ [\chi] \in S(G) \mid \chi(N) = 0 \} \subseteq \Sigma^n(G, \mathbb{Z}).\]
\end{itemize}
\end{theorem}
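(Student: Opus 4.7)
The plan is to outline a proof of both parts of the theorem, focusing on (b) since (a) follows by adapting the homological arguments to CW-complexes via Brown's criterion for homotopical finiteness. Since $N \supseteq G'$, the quotient $Q := G/N$ is abelian, and every character $\chi$ with $\chi(N) = 0$ factors through $\pi \colon G \mapsto Q$; this identifies $S(G,N)$ with $S(Q)$, and for such a $\chi$ realises $G_{\chi}$ as $\pi^{-1}(Q_{\bar\chi})$, where $\bar\chi$ is the induced character on $Q$. This translation controls how the sphere condition gets recast as module-theoretic data over monoid rings.

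For the easy direction of (b), assume $N$ is of type $FP_n$. Take a finite-type partial free resolution of $\mathbb{Z}$ over $\mathbb{Z}N$ and induce it along $\mathbb{Z}N \hookrightarrow \mathbb{Z}G_{\chi}$. Since $\mathbb{Z}G_{\chi}$ is free as a right $\mathbb{Z}N$-module, with basis any set of coset representatives of $G_{\chi}/N \iso Q_{\bar\chi}$, exactness and finite type are preserved, yielding $[\chi] \in \Sigma^n(G,\mathbb{Z})$.

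For the converse, which is the substantive content, I would proceed by induction on $n$, with $n=0$ vacuous. For the inductive step, invoke the Bieri--Eckmann criterion: $\mathbb{Z}$ is of type $FP_n$ over $\mathbb{Z}N$ if and only if $\mathrm{Tor}^{\mathbb{Z}N}_i(\mathbb{Z},-)$ commutes with direct products of $\mathbb{Z}N$-modules for $i \leq n-1$. Passing to $\mathbb{Z}G$-coefficients via Shapiro's lemma gives $\mathrm{Tor}^{\mathbb{Z}N}_i(\mathbb{Z}, M) \iso \mathrm{Tor}^{\mathbb{Z}G}_i(\mathbb{Z}, M \otimes_{\mathbb{Z}N} \mathbb{Z}G)$, and the induced module carries a natural $Q$-grading. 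The hypothesis $S(G,N) \subseteq \Sigma^n(G,\mathbb{Z})$ supplies local finiteness over each monoid ring $\mathbb{Z}G_{\chi}$ with $[\chi] \in S(G,N)$, and compactness of the sphere $S(Q)$ reduces everything to a finite open cover whose data one then attempts to glue.

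The main obstacle is precisely this gluing step: translating openness and local $FP_n$ data on $S(G,N)$ into an actual finite-type resolution over $\mathbb{Z}N$. The cleanest route is via the Novikov-type completions $\widehat{\mathbb{Z}G}_{\chi}$: one shows that $[\chi] \in \Sigma^n(G,\mathbb{Z})$ is equivalent to $\mathrm{Tor}^{\mathbb{Z}G}_i(\mathbb{Z}, \widehat{\mathbb{Z}G}_{\chi}) = 0$ for $i \leq n$, and then uses short exact sequences coupling $\mathbb{Z}G$ with $\widehat{\mathbb{Z}G}_{\chi}$ and $\widehat{\mathbb{Z}G}_{-\chi}$ to promote vanishing statements from antipodal pairs of characters into a global $FP_n$ statement for $\mathbb{Z}$ over $\mathbb{Z}N$. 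Part (a) is then obtained by lifting the entire argument from chain complexes to CW-complexes: replace the partial free resolution by a finite-type $(n-1)$-connected subcomplex of a $K(G,1)$, replace $\mathrm{Tor}$-vanishing by connectivity of preimages in the universal cover, and invoke Brown's criterion for $F_n$.
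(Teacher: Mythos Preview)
The paper does not prove this theorem at all: it is stated as background material, with part (a) attributed to Renz's thesis \cite{Renzthesis} and part (b) to Bieri--Renz \cite{B-Renz}, and no argument is supplied. So there is nothing in the paper to compare your proposal against beyond the bare citations.

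Your outline is a reasonable sketch of how one might approach the classical result, but it is worth noting that it does not follow the original Bieri--Renz strategy. Their proof works directly with \emph{valuations on free resolutions}: one fixes a finite-type free resolution of $\mathbb{Z}$ over $\mathbb{Z}G$, endows each free module with a valuation extending $\chi$, and shows that the resolution can be contracted within the $\chi \geq 0$ part after a uniform shift. Openness of $\Sigma^n(G,\mathbb{Z})$ and the finiteness criterion for $N$ both fall out of this valuation picture, with the compactness of $S(G,N)$ used to patch the local shifts into a single global one. The Novikov-completion route you describe is a later reformulation (Sikorav for $n=1$, Schweitzer's appendix to Bieri's notes, and subsequent work) and is cleaner algebraically, but your gluing step --- promoting antipodal vanishing to a global $FP_n$ statement --- is left quite vague; making it precise is essentially the content of the theorem, and it requires more than a short exact sequence involving $\widehat{\mathbb{Z}G}_{\chi}$ and $\widehat{\mathbb{Z}G}_{-\chi}$ when $\mathrm{rk}(G/N) > 1$. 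If you intend to give a self-contained proof rather than a citation, the valuation argument is probably the more direct choice.
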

 
The following result is well-known. Let us denote the center of a group $G$ by $Z(G)$. Part a)  follows directly from \cite[Theorem C]{B-Renz} and part b) is \cite[Lemma 2.1]{Meinert-VanWyk2}.
 
\begin{lemma} \label{central}  Let $G$ be a group and let $\chi \colon G \mapsto \mathbb{R}$ be a character such that $\chi(c) \not= 0$ for some $c \in Z(G)$.\\[5pt]
(a) If $G$ is of type $FP_n$, then $[\chi] \in \Sigma^n(G, \mathbb{Z})$.\\[5pt]
(b) If $G$ is of type $F_n$, then $[\chi] \in \Sigma^n(G)$.
\end{lemma}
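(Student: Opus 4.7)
My plan is to establish both parts via the classical ``central-shift'' argument underlying the cited references. For part (a), the definition of $\Sigma^n(G,\mathbb{Z})$ reduces the claim to showing that the trivial module $\mathbb{Z}$ is of type $FP_n$ as a $\mathbb{Z}G_\chi$-module. First I would replace $c$ by $c^{-1}$ if necessary so that $\chi(c) > 0$, and pick a partial free resolution $P_n \to P_{n-1} \to \cdots \to P_0 \to \mathbb{Z}$ by finitely generated free $\mathbb{Z}G$-modules, which exists because $G$ is of type $FP_n$. The key observation is that since $c$ is \emph{central}, multiplication by $c$ on each $P_i$ is a $\mathbb{Z}G$-linear endomorphism commuting with the boundary maps. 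Combined with $G = \bigcup_{k \geq 0} c^{-k} G_\chi$, this exhibits each $P_i$ as the direct limit over $k$ of finitely generated $\mathbb{Z}G_\chi$-submodules $c^{-k} P_i^{+}$, where $P_i^{+}$ is the $\mathbb{Z}G_\chi$-span of a fixed $\mathbb{Z}G$-basis, with transition maps given by the chain self-map induced by $c^{-1}$. A direct limit argument then transfers the $FP_n$ property from $\mathbb{Z}G$ to $\mathbb{Z}G_\chi$, giving part (a); this is essentially the content of \cite[Theorem C]{B-Renz}.

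For part (b), I would carry out the topological analogue. Take a $K(G,1)$-complex $Y$ with finite $n$-skeleton, which exists because $G$ is of type $F_n$, and consider its universal cover $\widetilde{Y}$. Central multiplication by $c$ lifts to a cellular $G$-equivariant self-map of $\widetilde{Y}$ that, because $\chi(c) > 0$, eventually translates any given finite subcomplex into the preimage of $G_\chi$. A standard telescoping argument in the Bieri--Renz framework then produces the required $(n-1)$-connectivity of the subcomplex of $\widetilde{Y}$ lying over $G_\chi$, whence $[\chi] \in \Sigma^n(G)$; this is \cite[Lemma 2.1]{Meinert-VanWyk2}. Alternatively, one could combine part (a) with the identity $\Sigma^n(G) = \Sigma^2(G) \cap \Sigma^n(G,\mathbb{Z})$ quoted in the introduction, once the case $n = 2$ is handled via a direct Cayley-complex translation argument.

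The main technical difficulty in both parts is verifying that the shift by $c$ is compatible with the algebraic (respectively, topological) boundary maps: this is precisely where centrality of $c$ is used, since it is what makes multiplication by $c$ a $\mathbb{Z}G$-module map, respectively a $G$-equivariant cellular self-map of $\widetilde{Y}$, rather than merely a set-theoretic translation. Once that compatibility is in hand, both statements reduce to routine direct-limit or telescoping arguments, and we conclude by observing that each cited theorem applies verbatim under our hypothesis.
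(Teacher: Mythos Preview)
Your proposal is correct and in fact more detailed than the paper, which does not give a proof at all: it simply records that part (a) follows directly from \cite[Theorem~C]{B-Renz} and that part (b) is \cite[Lemma~2.1]{Meinert-VanWyk2}. Your sketch unpacks the standard central-shift arguments underlying those cited results, so you are supplying content the paper leaves to the references.
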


\section{Bieri--Neumann--Strebel--Renz $\Sigma$-invariants of limit groups over Droms RAAGs and their subdirect products}
\label{Section 7}

The aim of this section is to compute the Bieri--Neumann--Strebel--Renz invariants $\Sigma^m(G)$ and $\Sigma^m(G,\mathbb{Q})$ for a group $G$ that is a limit group over Droms RAAGs, and the invariant $\Sigma^1(H)$ for a group $H$ that is a finitely presented residually Droms RAAG. 

\begin{proposition}[= Proposition A] \label{sigma1} Let $G$ be a limit group over Droms RAAGs such that $G$ has a trivial center. Then $$\Sigma^n(G) = \Sigma^n(G, \mathbb{Z}) = \Sigma^n (G, \mathbb{Q}) = \emptyset \hbox{ for every }n \geq 1.$$
\end{proposition}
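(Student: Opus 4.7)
The plan is to reduce the proposition to showing $\Sigma^1(G,\mathbb{Q}) = \emptyset$. This reduction is clean: for every $n \geq 1$ one has the inclusions
\[
\Sigma^n(G) \subseteq \Sigma^1(G) = \Sigma^1(G,\mathbb{Z}) \subseteq \Sigma^1(G,\mathbb{Q}) \quad \text{and} \quad \Sigma^n(G,\mathbb{Z}) \subseteq \Sigma^n(G,\mathbb{Q}) \subseteq \Sigma^1(G,\mathbb{Q}),
\]
where the first chain is the one recalled in the introduction and the inclusion $\Sigma^1(G,\mathbb{Z}) \subseteq \Sigma^1(G,\mathbb{Q})$ follows by applying $\mathbb{Q}\otimes_{\mathbb{Z}}-$ to a finite $\mathbb{Z}G_{\chi}$-presentation of the augmentation ideal. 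Hence $\Sigma^1(G,\mathbb{Q}) = \emptyset$ forces the vanishing of every invariant appearing in the statement.

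To prove $\Sigma^1(G,\mathbb{Q}) = \emptyset$, I fix a non-zero character $\chi\colon G\to \mathbb{R}$ and argue by contradiction, following the blueprint of \cite{Desi} where the analogous statement is established for limit groups over free groups. The central input is a non-trivial abelian splitting of $G$: since $G$ has trivial center it is non-abelian, and by Theorem \ref{Limit-subgroup-ICE} together with the structure theory developed by Casals-Ruiz, Duncan and Kazachkov in \cite{Montse} and \cite{Montse2}, $G$ admits a non-trivial action on a tree with abelian edge stabilizers of centralizer type. One then locates an edge group $C$ with $\chi(C)=0$ such that $\chi$ is non-trivial on both vertex groups of the induced splitting; the standard Bieri-Strebel obstruction for amalgams and HNN extensions then shows that the monoid $G_{\chi}$ is not finitely generated, the augmentation ideal of $\mathbb{Q}G_{\chi}$ is not finitely generated, and consequently $[\chi] \notin \Sigma^1(G,\mathbb{Q})$.

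The main obstacle is producing such an edge group $C$ for \emph{every} character $\chi$. If $\chi$ happens to be non-vanishing on all abelian subgroups coming from a given layer of the ICE hierarchy, one must descend into finer splittings further down the hierarchy; the trivial-center hypothesis is precisely what prevents this descent from collapsing to an abelian central extension which, by Lemma \ref{central}, would instead force $[\chi]\in \Sigma^n(G)$ and contradict the aim. The translation from the combinatorial splitting obstruction to the module-theoretic statement over $\mathbb{Q}$ is standard once a splitting is in hand: it is produced via the Mayer-Vietoris sequence associated to the Bass-Serre tree of the splitting, which forces an infinite-dimensional Novikov-type $H_0$ incompatible with $[\chi]\in \Sigma^1(G,\mathbb{Q})$.
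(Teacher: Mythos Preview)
Your reduction to $\Sigma^1(G,\mathbb{Q})=\emptyset$ is correct, but the argument you propose for that vanishing is not the one in the paper and, as written, has a real gap.

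The paper's proof is a two-line application of \cite[Lemma~29]{Desi}: a limit group $G$ over Droms RAAGs is free-by-(torsion-free nilpotent) (Proposition~\ref{free-by-(torsion-free nilpotent)}), it has $\chi(G)<0$ when its center is trivial (Lemma~\ref{Euler-char}), and $\mathbb{Q}$ admits a finite free resolution over $\mathbb{Q}G$ (Lemma~\ref{resolution}). Feeding these three facts into \cite[Lemma~29]{Desi} gives $\Sigma^n(G,\mathbb{Q})=\emptyset$ directly, and the inclusions $\Sigma^n(G)\subseteq\Sigma^n(G,\mathbb{Z})\subseteq\Sigma^n(G,\mathbb{Q})$ finish the job. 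No splitting is invoked at all; the mechanism is the Euler-characteristic argument internal to \cite{Desi}. So your description of ``the blueprint of \cite{Desi}'' as a Bass--Serre argument is a misattribution.

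Your alternative route via abelian splittings is plausible in spirit but is not carried out. The key assertion---that for \emph{every} non-zero character $\chi$ one can find, somewhere in the ICE hierarchy, an abelian edge group $C$ with $\chi(C)=0$ inducing a non-trivial splitting of $G$---is precisely the hard step, and you acknowledge this yourself as ``the main obstacle'' while only gesturing at a descent. Two concrete issues: first, Theorem~\ref{Limit-subgroup-ICE} gives $G$ only as a finitely generated \emph{subgroup} of an ICE group, so the splittings of the ambient group need not restrict to non-trivial splittings of $G$, and you have not explained how to extract a usable splitting of $G$ itself at each stage. Second, even granting a splitting with $\chi(C)=0$, the ``standard Bieri--Strebel obstruction'' you cite yields $[\chi]\notin\Sigma^1(G)=\Sigma^1(G,\mathbb{Z})$; the upgrade to $[\chi]\notin\Sigma^1(G,\mathbb{Q})$ via Mayer--Vietoris and ``Novikov-type $H_0$'' is asserted but not executed, and is not as automatic as you suggest. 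As it stands, the proposal is a sketch of a possible alternative strategy rather than a proof.
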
 

\begin{proof} By \cite[Lemma 29]{Desi}, for every free-by-(torsion-free nilpotent) group $G$ such that $\chi(G) < 0$ and the
trivial $\mathbb{Q} G$-module $\mathbb{Q}$ has a free resolution of finite length and with finitely generated modules, we have that  $\Sigma^n (G, \mathbb{Q})$ and $\Sigma^n(G)$ are the empty set for every $n \geq 1$. Then, it is enough to apply Lemma \ref{resolution} and Lemma \ref{Euler-char} and the fact that for any group $H$ of type $F_n$ we have that $\Sigma^n (H) \subseteq \Sigma^n(H, \mathbb{Z} ) \subseteq \Sigma^n(H, \mathbb{Q})$.
\end{proof}

In \cite{Benno} Kuckuck suggested the following generalization of the VSP Criterion from \cite{Bridson2}. The Virtual Surjection Conjecture is known to hold for $n \leq 2$ (see \cite{Bridson2}).

{\bf The Virtual Surjection Conjecture}
{\it Let $n \leq m$ be positive integers and let $H$ be a subgroup of the direct product $G_1 \times \cdots \times G_m$ where $G_i$ is of type $F_n$ for $1 \leq i \leq m$. If $H$ is virtually surjective on $n$-tuples, then $H$ is of type $F_n$.} 

Following the above conjecture D. Kochloukova and F. Lima suggested in \cite{Desi2} the following conjecture.

{\bf The Monoidal Virtual Surjection Conjecture} {\it Let $n \geq m$ be positive integers. Let $H \leq L_1 \times \cdots \times L_m$ be a  full subdirect product of non-abelian limit groups over free groups $L_1, \dots, L_m$ such that $H$ is of type $FP_n$ and finitely presented. Then \[[\chi] \in \Sigma^n(H, \mathbb{Q}) = \Sigma^n(H, \mathbb{Z}) = \Sigma^n(H)\] if and only if 	
\begin{equation} \label{monod0} p_{j_1, \dots, j_n}(H_{\chi}) = p_{j_1, \dots, j_n}(H) \quad \text{for all} \quad 1 \leq j_1 < \dots  < j_n \leq m, \end{equation}
where $p_{j_1, \dots, j_n}\colon H \mapsto L_{j_1} \times \cdots \times L_{j_n}$ is  the canonical projection.}
 
In \cite[Theorem A]{Desi2} it was proved that the forward direction of the Monoidal Virtual Surjection Conjecture holds. Here we show that the same result holds even for full subdirect products of limit groups over Droms RAAGs with trivial center. In order to prove this, we will need the following result.

\begin{proposition} \label{limit1} Let $m \geq 2$, $1 \leq n \leq m$ and  $H < L_1 \times \cdots \times L_m$ be a finitely generated full subdirect product. Assume further that\\[5pt]
(a) $L_i$ is finitely generated and there is a free normal subgroup $F_i$ of $L_i$ such that  $L_i\slash F_i$ is polycyclic-by-finite for each $1 \leq i \leq m$;\\[5pt]
(b) $N \coloneqq F_1 \times \cdots \times F_m \subseteq H^\prime$;\\[5pt]
(c) for each $i$ there is a finite length free resolution of the trivial $\mathbb{Q} L_i$-module $\mathbb{Q}$ with all modules finitely generated and the Euler characteristic $\chi(L_i) < 0$;\\[5pt]
(d) $[\chi] \in \Sigma^n( H, \mathbb{Q})$.

Then for any $1 \leq j_1 < \dots < j_n \leq m$ we have that
\[\psi (H_{\chi}\slash N) = \psi(H\slash N)\]
where $\psi \colon H\slash N \mapsto L_{j_1}\slash F_{j_1} \times \cdots \times L_{j_n}\slash F_{j_n}$ is induced by $p_{j_1, \dots, j_n}$.
\end{proposition}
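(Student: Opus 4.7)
The plan is to reformulate the target in a tractable form and then derive a contradiction using the same Euler--characteristic machinery as in Proposition~A. First, since $N\subseteq H'$ by (b), the character $\chi$ factors through $\bar H := H/N$ as a character $\bar\chi$, and $H_\chi/N=\bar H_{\bar\chi}$. Writing $\bar L_i := L_i/F_i$ (polycyclic-by-finite by (a)), the map $\psi$ of the statement is the induced projection $\bar\psi\colon\bar H\to \bar L_{j_1}\times\cdots\times\bar L_{j_n}$. A direct check --- using only that a nonzero subgroup of $\mathbb{R}$ is unbounded --- shows that
\[\bar\psi(\bar H_{\bar\chi})=\bar\psi(\bar H)\iff \bar\chi|_{\ker\bar\psi}\not\equiv 0\iff \chi|_M\not\equiv 0,\]
where $M:=H\cap\bigl(\prod_{k=1}^n F_{j_k}\times\prod_{i\notin J}L_i\bigr)$ and $J=\{j_1,\dots,j_n\}$. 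Indeed, if $\bar\chi$ vanishes on $\ker\bar\psi$ it descends to a nonzero character $\tilde\chi$ of $\bar\psi(\bar H)$, so $\bar\psi(\bar H_{\bar\chi})$ equals the proper submonoid $\{q:\tilde\chi(q)\geq 0\}$; conversely, any $k_0\in\ker\bar\psi$ with $\bar\chi(k_0)>0$ allows one to push arbitrary pre-images into $\bar H_{\bar\chi}$ by multiplying by a suitable power of $k_0$.

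So it suffices to prove the contrapositive: if $\chi|_M=0$, then $[\chi]\notin\Sigma^n(H,\mathbb{Q})$. Assuming $\chi|_M=0$, the character $\chi$ descends to $H/M$, which embeds into the polycyclic-by-finite group $\bar L_{j_1}\times\cdots\times\bar L_{j_n}$; since $H$ is a full subdirect product, $H/M$ surjects onto each single factor $\bar L_{j_k}$. Write $M_k\supseteq M$ for the kernel of the composition $H\twoheadrightarrow H/M\twoheadrightarrow \bar L_{j_k}$. Using an LHS spectral sequence in homology associated with $1\to M_k\to H\to \bar L_{j_k}\to 1$, the membership $[\chi]\in\Sigma^n(H,\mathbb{Q})$ together with the finite-length finitely generated free resolutions supplied by (c) should force an $FP_n$-style condition relative to $\chi$ on the quotient $\bar L_{j_k}$. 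However, by (c) and Proposition~A applied to $L_{j_k}$ (combined with the short free resolution coming from the freeness of $F_{j_k}$) one has that the corresponding $\Sigma$-invariants of $L_{j_k}$ are empty, and the contradiction follows.

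The main technical obstacle is the $\Sigma$-theoretic descent through the extension $1\to M_k\to H\to \bar L_{j_k}\to 1$: in general $\Sigma$-information does not pass freely between $H$, $M_k$, and $\bar L_{j_k}$. The hypotheses (a)--(c) are tailored precisely to remedy this --- the free-by-polycyclic structure of each $L_i$ together with the finite-length resolutions of (c) equip $M_k$ with enough homological control for the relevant spectral sequence to collapse into a usable form, and the Euler-characteristic obstruction $\chi(L_{j_k})<0$ then supplies the final emptiness. Carrying this out rigorously is the heart of the proof and closely parallels the strategy of \cite[Lemma~29]{Desi} and of \cite[Theorem~A]{Desi2}, now adapted to the Droms-RAAG setting.
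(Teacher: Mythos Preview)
Your reformulation in the first paragraph is correct and matches the paper (this is essentially \cite[Lemma~5.10]{Desi2}). The gap is in the contradiction step, and it is genuine rather than merely technical.

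First, the quotient in your short exact sequence $1\to M_k\to H\to \bar L_{j_k}\to 1$ is the \emph{polycyclic-by-finite} group $\bar L_{j_k}=L_{j_k}/F_{j_k}$, not $L_{j_k}$ itself. Polycyclic-by-finite groups have $\Sigma^n(\,\cdot\,,\mathbb{Q})=S(\,\cdot\,)$ for every $n$, so no obstruction can arise at that quotient. You then invoke Proposition~A for $L_{j_k}$, but $L_{j_k}$ is not a quotient of $H$ (only $\bar L_{j_k}$ is), and in any case Proposition~A concerns limit groups over Droms RAAGs while Proposition~\ref{limit1} is stated for the much broader class described by (a)--(c); quoting Proposition~A here is circular. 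Second, by passing to a \emph{single} factor $\bar L_{j_k}$ you lose the parameter $n$: your sketched argument would, if it worked, already contradict $[\chi]\in\Sigma^1(H,\mathbb{Q})$, which is too strong. The dimension $n$ has to enter through something $n$-fold, and the spectral sequence you mention does not supply this.

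The paper's argument supplies precisely the missing ingredient. One takes a free $\mathbb{Q}H_\chi$-resolution of $\mathbb{Q}$, finitely generated through degree $n$, and tensors down to $\mathbb{Q}(H_\chi/N)$. For discrete $\chi$ the ring $\mathbb{Q}(H_\chi/N)$ is Noetherian (since $H/N$ is polycyclic-by-finite), so $H_n(N,\mathbb{Q})$ is finitely generated over $\mathbb{Q}(H_\chi/N)$. Now K\"unneth gives
\[
H_n(N,\mathbb{Q})\ \cong\ \bigoplus_{j_1<\cdots<j_n}\bigl(F_{j_1}^{\mathrm{ab}}\otimes\mathbb{Q}\bigr)\otimes\cdots\otimes\bigl(F_{j_n}^{\mathrm{ab}}\otimes\mathbb{Q}\bigr),
\]
and each summand $W_{j_1,\dots,j_n}$ is therefore finitely generated over $\mathbb{Q}\,\psi(H_\chi/N)$. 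This is where the $n$ enters. Condition~(c) is then used, via \cite[Proposition~7]{Desi}, to embed the group algebra $\mathbb{Q}[\widehat Q_{j_1}\times\cdots\times\widehat Q_{j_n}]$ (with $\widehat Q_i$ a torsion-free polycyclic finite-index subgroup of $L_i/F_i$) into $W_{j_1,\dots,j_n}$; if $\psi(H_\chi/N)\neq\psi(H/N)$ one derives that $\mathbb{Q}K_1$ is finitely generated over $\mathbb{Q}(K_1)_{\mu_1}$ for a nonzero discrete character $\mu_1$, a contradiction. The non-discrete case reduces to the discrete one as in \cite[Theorem~A]{Desi2}. None of this machinery --- the K\"unneth decomposition in degree $n$, the embedding of group rings coming from $\chi(L_i)<0$ --- appears in your outline, and these are the steps that actually carry the proof.
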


\begin{proof} By definition, $[\chi] \in \Sigma^n( H, \mathbb{Q})$ is equivalent to $\mathbb{Q}$ being of type $FP_n$ as a $\mathbb{Q} H_{\chi}$-module.
Let \[{\mathcal F} \colon \dots \mapsto F_i \mapsto F_{i-1} \mapsto \dots \mapsto F_0 \mapsto \mathbb{Q} \mapsto 0\]
be a free resolution of $\mathbb{Q}$ as a  $\mathbb{Q} H_{\chi}$-module with $F_i$ finitely generated for $i \leq n$.  Then ${\mathcal F} \otimes_{\mathbb{Q} N} \mathbb{Q}$ is a complex  whose modules $M_i \coloneqq F_i \otimes_{\mathbb{Q} N} \mathbb{Q}$ are free $\mathbb{Q} (H_{\chi}\slash N)$-modules and $M_i$ is finitely generated for each $i \leq n$.\\[5pt]
1) Suppose first that $\chi$ is a discrete character. Since $H\slash N$ is polycyclic-by-finite, we can apply \cite[Lemma~5.2]{Desi2} to get that $\mathbb{Q}  (H_{\chi}\slash N)$ is a Noetherian ring. Hence, for $ i \leq n$, we have that
$M_i$ is a Noetherian  $\mathbb{Q} (H_{\chi}\slash N)$-module. Then if $d_n \colon M_n\mapsto M_{n-1}$ is the differential of the complex  ${\mathcal F} \otimes_{\mathbb{Q} N} \mathbb{Q}$, we deduce that $\ker(d_n)$ is a finitely generated $\mathbb{Q} (H_{\chi}\slash N)$-module. Therefore,
\[H_n(N, \mathbb{Q}) \simeq H_n({\mathcal F} \otimes_{\mathbb{Q} N} \mathbb{Q}) = \ker(d_n)\slash \text{im}( d_{n+1})\] is finitely generated as a $\mathbb{Q} (H_{\chi}\slash N)$-module, so it is a Noetherian $\mathbb{Q} (H_{\chi}\slash N)$-module. On the other hand, by the Kunneth formula, \[H_n(N, \mathbb{Q}) \simeq \bigoplus_{1 \leq j_1 < \dots < j_n \leq m} ((F_{j_1}\slash [F_{j_1}, F_{j_1}]) \otimes_{\mathbb{Z} } \mathbb{Q} ) \otimes_{\mathbb{Q}} \dots  \otimes_{\mathbb{Q}} ((F_{j_n}\slash [F_{j_n}, F_{j_n}])  \otimes_{\mathbb{Z} } \mathbb{Q} )\] and each direct summand is a $\mathbb{Q} (H\slash N)$-submodule, where the $H\slash N$-action is induced by conjugation.

In particular, by Noetherianess, \[W_{j_1, \dots, j_n} \coloneqq ((F_{j_1}\slash[F_{j_1}, F_{j_1}]) \otimes_{\mathbb{Z} } \mathbb{Q} ) \otimes_{\mathbb{Q}} \dots  \otimes_{\mathbb{Q}} ((F_{j_n}\slash[F_{j_n}, F_{j_n}])  \otimes_{\mathbb{Z} } \mathbb{Q} )\] is a finitely generated $\mathbb{Q} (H_{\chi}\slash N)$-submodule of $H_n(N, \mathbb{Q})$. Since the action of $H_{\chi}\slash N$ on $W_{j_1, \dots, j_n}$ factors through $\psi (H_{\chi}\slash N)$, we deduce that \begin{equation} \label{sig} W_{j_1, \dots, j_n} \hbox{ is a finitely generated }\mathbb{Q} ( \psi (H_{\chi}\slash N) )\text{-module}. \end{equation}

Each $L_i\slash F_i$ is polycyclic-by-finite, so there is a characteristic subgroup $\widehat{Q}_i$  of finite index in $Q_i \coloneqq L_i\slash F_i$ that is torsion-free and polycyclic. Even though $\widehat{Q}_i$ is not nilpotent but polycyclic, the proof of \cite[Proposition 7]{Desi} applies in this case, that is, condition (c) from the statementent implies that each $F_{j_r}\slash [F_{j_r}, F_{j_r}]\otimes_{\mathbb{Z}} \mathbb{Q}$ contains a non-zero free cyclic $\mathbb{Q} (\widehat{Q}_i)$-submodule. In order words, $ \mathbb{Q} \widehat{Q}_i$ embeds in  $F_{j_r}\slash [F_{j_r}, F_{j_r}]\otimes_{\mathbb{Z}} \mathbb{Q}$. Then,
\begin{equation} \label{sig2} \mathbb{Q}[ \widehat{Q}_{j_1} \times \cdots \times \widehat{Q}_{j_n}]\hbox{ embeds in }W_{j_1, \dots, j_n }.\end{equation}

Let $\chi_0 \colon H\slash N \mapsto \mathbb{R}$ be the discrete character induced by $\chi \colon H \mapsto \mathbb{R}$. Then, there is $q_0 \in  H\slash N$ such that $\chi(q_0)>0$ and we have disjoint unions
\[ H \slash N= \dot{\bigcup}_{\alpha \in \mathbb{Z}} q_0^{\alpha} \ker(\chi_0) \quad \text{and} \quad H_{\chi} \slash N= \dot{\bigcup}_{\alpha \geq 0} q_0^{\alpha} \ker(\chi_0).\]
Applying $\psi$,
\[\psi( H \slash N)= \bigcup_{\alpha \in \mathbb{Z}} {\psi(q_0)}^{\alpha} \psi(\ker(\chi_0)) \quad \text{and} \quad \psi( H_{\chi} \slash N)= \bigcup_{\alpha \geq 0} {\psi(q_0)}^{\alpha} \psi(\ker(\chi_0)),\]
but these last two unions are not necessarily disjoint.

Suppose that $\psi(H_{\chi} \slash N) \not= \psi(H\slash N)$. Then $\psi(q_0)^{ -1} \not\in
\psi(H_{\chi}\slash N) $ and this implies that \[ [\psi(H\slash N) \colon \psi(\ker(\chi_0))] = \infty.\] Thus, $\psi(H\slash N) = \psi(\ker (\chi_0)) \rtimes \langle \psi(q_0) \rangle$ and there is a discrete character \[ \mu \colon  K \coloneqq \psi(H\slash N) \mapsto \mathbb{R}\] such that $\ker(\mu) = \psi(\ker ( \chi_0))$, $\mu(\psi(q_0)) = \chi_0 ( q_0)$.

Furthermore, note that $\psi(H_{\chi}\slash N) = K_{\mu} = \{ k \in K \mid \mu (k) \geq 0 \}$. Now (\ref{sig}) can be rewritten as $W_{j_1, \dots, j_n}$ is finitely generated as a $\mathbb{Q} K_{\mu}$-module, that is, $W_{j_1, \dots, j_n}$ is of type $FP_0$ as a $\mathbb{Q} K_{\mu}$-module. Hence, in the language of $\Sigma$-theory,  $[\mu] \in \Sigma^0_{\mathbb{Q}}(K, W_{j_1, \dots, j_n}).$

Let $K_1 \coloneqq K \cap (\widehat{Q}_{j_1} \times \cdots \times \widehat{Q}_{j_n})$ and $\mu_1 \coloneqq \mu_{| _{K_1}} \colon K_1 \mapsto \mathbb{R}$. Since $ \widehat{Q}_{j_1} \times \cdots \times \widehat{Q}_{j_n}$ has finite index in $Q_{j_1} \times \cdots \times Q_{j_n}$, we have that $[K \colon K_1] < \infty$. Then, by \cite[Theorem 9.3]{Meinert-VanWyk}, $[\mu_1] \in \Sigma^0_{\mathbb{Q}}(K_1, W_{j_1, \dots, j_n})$, that is, \begin{equation} \label{eqqe}  W_{j_1, \dots, j_n} \hbox{ is } FP_0 \hbox{ (that is, it is finitely generated) as a }\mathbb{Q} ( K_1) _{\mu_1}\hbox{-module.} \end{equation}  Note that \[\mathbb{Q} ( K_1) _{\mu_1} = \mathbb{Q} (K_{\mu} \cap  (\widehat{Q}_{j_1} \times \cdots \times \widehat{Q}_{j_n})),\] and since $\mu_1$ is a discrete character, $\mathbb{Q} ( K_1) _{\mu_1}$ is a Noetherian ring. The Noetherianess of  $\mathbb{Q} ( K_1) _{\mu_1}$, (\ref{sig2}) and (\ref{eqqe}) imply that 
 $ \mathbb{Q}[ \widehat{Q}_{j_1} \times \cdots \times \widehat{Q}_{j_n}]$ is finitely generated as a $\mathbb{Q} ( K_1) _{\mu_1}$-module.
 
This easily leads to a contradiction. Indeed, by the Noetherianess of  $\mathbb{Q} ( K_1) _{\mu_1}$ and the fact that $\mathbb{Q} K_1$ is a  $\mathbb{Q} ( K_1) _{\mu_1}$-submodule of $ \mathbb{Q}[ \widehat{Q}_{j_1} \times \cdots \times \widehat{Q}_{j_n}]$, we deduce that $\mathbb{Q} K_1$ is finitely generated as a
$\mathbb{Q} ( K_1) _{\mu_1}$-module, which is obviously false for a non-zero character $\mu_1$.\\[5pt]
2) The fact that the case of a discrete character implies the general case is the same as in the proof of \cite[Theorem A]{Desi2}.
\end{proof}

\begin{theorem}[= Theorem B] \label{applications1} Let  $m \geq 2$, $1 \leq n \leq m$ and $H < L_1 \times \cdots \times L_m$ be a finitely presented full subdirect product of limit groups over Droms RAAGs $L_1, \dots, L_m$  where each $L_i$ has trivial center. Suppose that $[\chi] \in \Sigma^n(H, \mathbb{Q})$. Then
\begin{equation} \label{monod0} p_{j_1, \dots, j_n}(H_{\chi}) = p_{j_1, \dots, j_n}(H) \quad \text{for all} \quad 1 \leq j_1 < \dots  < j_n \leq m, \end{equation} where $p_{j_1, \dots, j_n}\colon H \mapsto L_{j_1} \times \cdots \times L_{j_n}$ is  the canonical projection.
\end{theorem}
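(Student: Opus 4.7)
The strategy is to deduce Theorem~\ref{applications1} from Proposition~\ref{limit1} by verifying its four hypotheses for a judicious choice of free normal subgroups $F_i \trianglelefteq L_i$, and then to lift the resulting modular equality $\psi(H_\chi/N) = \psi(H/N)$ to the desired equality of projections. The work thus splits naturally into (i) the verification of hypotheses (a)--(d) of Proposition~\ref{limit1}, and (ii) the promotion of the conclusion of the proposition to the statement of the theorem.

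Hypotheses (a), (c), (d) come for free. Hypothesis (a) is immediate from Proposition~\ref{free-by-(torsion-free nilpotent)}, which supplies for each $L_i$ a free normal subgroup $F_i \trianglelefteq L_i$ whose quotient is torsion-free nilpotent and hence polycyclic-by-finite. Hypothesis (c) combines Lemma~\ref{resolution} (finite-length free resolution of the trivial $\mathbb{Q} L_i$-module with finitely generated modules) with Lemma~\ref{Euler-char}, which together with the trivial-center assumption yields $\chi(L_i) < 0$. Hypothesis (d) is precisely the assumption $[\chi] \in \Sigma^n(H,\mathbb{Q})$.

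The technical core of the proof is therefore hypothesis (b): $N := F_1 \times \cdots \times F_m \subseteq H'$. I expect this to be the main obstacle. To secure it one observes that $[\chi] \in \Sigma^n(H,\mathbb{Q})$ forces $H$ to be of type $FP_n(\mathbb{Q})$, so by the generalised VSP-style result recorded at the end of Section~\ref{Limit groups}, $H$ projects with finite-index image onto every $n$-tuple $L_{j_1} \times \cdots \times L_{j_n}$, and in particular onto every pair $L_i \times L_j$. One then chooses the $F_i$ carefully, refining the free part of Proposition~\ref{free-by-(torsion-free nilpotent)} (for instance replacing $F_i$ by $F_i \cap L_i'$, which remains free, normal in $L_i$, and embeds $L_i/F_i$ into the product of a torsion-free nilpotent group and a finitely generated abelian group, hence still polycyclic). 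Combining this choice with the virtual surjection on pairs and the trivial-center hypothesis on each $L_i$, together with the structural properties of residually Droms RAAG subdirect products from \cite{Jone} and \cite{preprint1}, one arranges that the embedded copy of each $F_i$ lies inside $H'$, which gives (b).

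Once (a)--(d) are in place, Proposition~\ref{limit1} yields
\[
\psi(H_\chi/N) = \psi(H/N),
\]
where $\psi \colon H/N \to L_{j_1}/F_{j_1} \times \cdots \times L_{j_n}/F_{j_n}$ is induced by $p_{j_1,\ldots,j_n}$. To conclude, fix $h \in H$; the equality above produces $h_0 \in H_\chi$ with $p_{j_k}(h_0 h^{-1}) \in F_{j_k}$ for $k = 1, \ldots, n$. The element of $N$ whose $j_k$-th coordinate equals $p_{j_k}(h_0 h^{-1})$ and whose remaining coordinates are trivial lies in $N \subseteq H'$, and since every character vanishes on the commutator subgroup it also lies in $H_\chi$. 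Multiplying $h_0$ on the left by its inverse gives an element of $H_\chi$ whose image under $p_{j_1,\ldots,j_n}$ is $p_{j_1,\ldots,j_n}(h)$, which establishes $p_{j_1,\ldots,j_n}(H_\chi) = p_{j_1,\ldots,j_n}(H)$ and finishes the proof.
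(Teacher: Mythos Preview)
Your overall plan—reduce to Proposition~\ref{limit1} and then lift the modular conclusion—is exactly the paper's approach, and your verification of (a), (c), (d) as well as your lifting step (ii) are correct. The lifting step in fact fills in a detail the paper leaves implicit: the paper's proof ends with ``it suffices to show that we can apply Proposition~\ref{limit1}'', but that proposition only concludes $\psi(H_\chi/N)=\psi(H/N)$, and your use of $N\subseteq H'\subseteq\ker\chi$ to promote this to $p_{j_1,\ldots,j_n}(H_\chi)=p_{j_1,\ldots,j_n}(H)$ is the right argument.

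The gap is in (b). Your proposed refinement $F_i \rightsquigarrow F_i \cap L_i'$ preserves hypothesis (a) but does nothing toward $F_i\subseteq H'$: lying in $L_i'$ is a condition internal to the factor $L_i$, whereas containment in $H'$ is a condition on the subdirect product, and the two are unrelated. The paper instead invokes \cite[Proposition~5.1]{Desi2}, whose argument rests on three concrete inputs: (1) each $L_j$ is free-by-(torsion-free nilpotent) \cite[Proposition~A]{preprint1}; (2) setting $N_{i,j}:=p_j(\ker p_i)$, the iterated commutator $[N_{1,j},\ldots,N_{j-1,j},N_{j+1,j},\ldots,N_{m,j}]$ lies in $H$ \cite[Lemma~6.1]{Bridson}; and (3) $N_j:=\bigcap_{i\neq j}N_{i,j}$ has finite index in $L_j$ (argued in \cite{Jone}, using that $H$ is finitely presented). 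From these one manufactures $F_j$ as a sufficiently deep term of the lower central series of $N_j$: it is free by (1), normal in $L_j$ with polycyclic-by-finite quotient by (3), and the commutator containment (2) together with subdirectness pushes it into $H'$. Your gesture toward virtual surjection on pairs and ``structural properties from \cite{Jone} and \cite{preprint1}'' is pointing at the right source for (3), but the commutator mechanism (2), which is the actual device producing $F_j\subseteq H'$, is the missing idea in your sketch.
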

		
\begin{proof} It suffices to show that we can apply Proposition \ref{limit1}. Condition (c) from Proposition \ref{limit1} is Lemma \ref{Euler-char}. We claim that conditions (a) and (b) from Proposition \ref{limit1} hold. In the case when each $L_i$ is a non-abelian limit group over free groups the fact that conditions (a) and (b) from Proposition \ref{limit1} hold is established in \cite[Proposition 5.1]{Desi2}. The proof of \cite[Proposition 5.1]{Desi2} requires three properties of $L_i$:\\[5pt]
1) each $L_i$ is free-by-nilpotent;\\[5pt]
2) for $N_{i,j} \coloneqq p_j(\ker(p_i))$ we have that $[N_{1,j}, \dots, N_{j-1,j}, N_{j+1,j} , \dots, N_{m,j}] \subseteq H$;\\[5pt]
3) each $N_j \coloneqq \bigcap_{i \neq j}N_{i,j}$ has finite index in $L_j$.
	 
But if $L_i$ is a limit group over Droms RAAGs with trivial center, condition 1) is proved in \cite[Proposition A]{preprint1}, condition 2) is proved in \cite[Lemma 6.1]{Bridson} and for condition 3) see the argument before \cite[Theorem 6.2]{Jone} (the notation is different in \cite{Jone}).
\end{proof}

The next step is to compute the invariant $\Sigma^1(H)$ for a finitely presented residually Droms RAAG $H$. For that, we start with a technical result.
	
\begin{lemma} \label{converse}
Let  $H < L_1 \times \cdots \times L_m$ be a subdirect product. Assume further that\\[5pt]
(1) the group $L_i$ is finitely generated and  there is a free normal subgroup $F_i$ of $L_i$ such that $L_i\slash F_i$ is polycyclic-by-finite for each $1 \leq i \leq m$;\\[5pt]
(2) $N \coloneqq F_1 \times \cdots \times F_m \subseteq H^\prime$.	

Then \[\{ [\chi] \in S(H) \mid 
	p_i(H_{\chi}) = p_i(H) =  L_i \text{ for every }1 \leq i \leq m \} \subseteq \Sigma^1(H).\]
\end{lemma}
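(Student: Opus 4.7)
The plan is a descent-and-lift argument. Since $N \subseteq H' \subseteq \ker\chi$, the character factors as $\chi = \bar\chi \circ \pi$ with $\pi \colon H \twoheadrightarrow H/N$ and $\bar\chi \ne 0$. The quotient $H/N$ is finitely generated and embeds in $\prod_i L_i/F_i$, hence is polycyclic-by-finite; consequently all its subgroups are finitely generated, and the Bieri--Neumann--Strebel criterion (applied to $\ker\bar\chi \supseteq (H/N)'$) gives $[\bar\chi] \in \Sigma^1(H/N)$.

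Let $C \subseteq H_\chi$ denote the connected component of $1$ in $\Gamma_\chi$. Since right-multiplication by any element of $\ker\chi$ is an automorphism of $\Gamma_\chi$, one checks that $C \cap \ker\chi$ is a subgroup. I reduce $[\chi] \in \Sigma^1(H)$ to proving $N \subseteq C$: choose a finite generating set $X$ of $H$ and use its image in $H/N$ as a generating set there. Given $h \in H_\chi$, a path in $\Gamma_{\bar\chi}(H/N)$ from $\bar 1$ to $\bar h$ lifts vertex-by-vertex to a path in $\Gamma(H)$ from $1$ to some $u_m$ with $\pi(u_m) = \bar h$, so $u_m = hn$ for some $n \in N$; since $\chi = \bar\chi \circ \pi$, every lifted vertex is in $H_\chi$, so the lifted path lies in $\Gamma_\chi$ and $hn \in C$. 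Right-multiplication by $n^{-1}$ (an automorphism of $\Gamma_\chi$, as $n^{-1} \in \ker\chi$) carries this to a path from $n^{-1}$ to $h$; if $N \subseteq C$ then $n^{-1} \in C$, hence $h \in C$.

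It remains to show $N \subseteq C$. The hypothesis $p_i(H_\chi) = L_i$ is equivalent to the existence, for each $i$, of $k_i \in K_i := \ker(p_i|_H)$ with $\chi(k_i) > 0$: if $\chi$ vanished on $K_i$ it would descend to a character of $L_i$ that is everywhere non-negative, hence zero. Because $L_i$ is finitely generated and $L_i/F_i$ is finitely presented (as polycyclic-by-finite), $F_i$ is normally generated in $L_i$ by a finite subset $F_i^{(0)}$. Writing $\tilde F_i := F_i \times \prod_{j \ne i}\{1\} \subseteq N \subseteq H$, the subgroup $\tilde F_i$ is generated by the $H$-conjugates $\{h\tilde f_0 h^{-1} : h \in H,\ f_0 \in F_i^{(0)}\}$, since any $L_i$-conjugate of $\tilde f_0$ is realised by conjugation by an element of $H$ with prescribed $i$-th projection (using subdirectness). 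As $C \cap \ker\chi$ is a subgroup, it suffices to show each $h\tilde f_0 h^{-1}$ lies in $C$.

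The final step is a pumping argument using $k_i$. Since $k_i$ has trivial $i$-th coordinate while $\tilde F_i$ is supported only in the $i$-th coordinate, $[k_i, \tilde F_i] = 1$, whence
\[ h\tilde f_0 h^{-1} = k_i^{-N}\cdot h\tilde f_0 h^{-1}\cdot k_i^N \qquad (N \geq 1). \]
Extend the generating set of $H$ to include $k_i$ and $\tilde f_0$, and express $h$ as a word in it. The displayed product spells out a Cayley-graph path from $1$ to $h\tilde f_0 h^{-1}$ whose intermediate $\chi$-values are the partial sums of the letter-$\chi$-values read from the right end of the word: these first ascend through the trailing $k_i^N$ to height $N\chi(k_i)$, then fluctuate during the traversal of $h\tilde f_0 h^{-1}$, and finally descend through the leading $k_i^{-N}$ to $0$. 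For $N$ large enough that $N\chi(k_i)$ dominates the maximum negative excursion of the fluctuating middle portion, the entire path stays in $\Gamma_\chi$, so $h\tilde f_0 h^{-1} \in C$. Hence $\tilde F_i \subseteq C$ for every $i$, giving $N \subseteq C$, and by the reduction above $[\chi] \in \Sigma^1(H)$. The crux is the placement of $k_i^N$ at the right end of the word so that it is traversed first, building up a positive $\chi$-buffer; this is only legal because the subdirect product structure forces the commutation $[k_i, \tilde F_i] = 1$.
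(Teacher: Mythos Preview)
Your proof is correct and takes a genuinely different route from the paper's. The paper invokes the Bieri--Neumann--Strebel criterion that $[\chi] \in \Sigma^1(H)$ whenever $H'$ is finitely generated as a group-with-operators under some finitely generated submonoid $M \subseteq H_\chi$ acting by conjugation: choosing $M$ so that $p_i(M) = L_i$ for each $i$, one gets $F_i = \langle A_i\rangle^{L_i} = \langle A_i\rangle^{p_i(M)}$, and since $H'/N$ is polycyclic-by-finite a single line gives $H' = \big\langle (\bigcup_i A_i) \cup D\big\rangle^{M}$. You instead work directly from the Cayley-graph definition: factor through the polycyclic-by-finite quotient $H/N$ (where $\Sigma^1$ is the full sphere), lift paths, and resolve the $N$-coset ambiguity by a pumping trick exploiting $[k_i,\tilde F_i]=1$. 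Both arguments rest on the same structural inputs---finite normal generation of each $F_i$ in $L_i$, $H/N$ polycyclic-by-finite, and the availability in $H_\chi$ of elements with prescribed projections---and the commutation you isolate is exactly what makes the paper's step from $\langle \tilde A_i\rangle^{p_i(M)}$ to $\langle \tilde A_i\rangle^{M}$ go through (conjugation of $\tilde F_i$ by $h\in H$ depends only on $p_i(h)$). The paper's argument is shorter because the monoid criterion absorbs the Cayley-graph combinatorics once and for all; yours is more self-contained and makes the geometric mechanism explicit. One expositional point: you invoke a finite generating set $X$ of $H$ before observing that your own ingredients already imply $H$ is finitely generated (the paper establishes this at the outset), and the generating set should be fixed once---already containing all $k_i$ and all $\tilde f_0$---before you run both the lifting and the pumping arguments, so that $C$ does not change midway.
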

	
\begin{proof} 
Let us check that under the above conditions the group $H$ is finitely generated. Indeed, fix a finite subset $A_i \subseteq F_i$ such that $F_i = \langle A_i \rangle^{ L_i}$
and a finite subset $B_i \subseteq H$ such that $L_i = p_i(H) = \langle p_i(B_i) \rangle.$
Then, \[F_i = \langle A_i \rangle^{ L_i} = \langle A_i \rangle^{ p_i(H)} = \langle A_i \rangle^{ \langle p_i(B_i) \rangle} = \langle A_i \rangle^{ \langle B_i \rangle}  \subseteq \langle A_i \cup B_i  \rangle\]
and this implies that 
	\[N = F_1 \times \cdots \times F_m \subseteq \big{\langle} \bigcup_{ 1 \leq i \leq m} (A_i \cup B_i) \big{\rangle} \subseteq H.\]
In addition, $H\slash N$ is a subgroup of the polycyclic-by-finite group $L_1\slash F_1 \times \cdots \times L_m \slash F_m$,  hence $H\slash N$ is finitely generated, so there is a finite subset $C \subseteq H$ such that $H = N \langle C \rangle$. Thus, $C \cup \bigcup_{ 1 \leq i \leq m} (A_i \cup B_i)$ is a finite generating set for $H$.

By \cite{B-N-S}, we have that $[\chi] \in \Sigma^1(H) = \Sigma^1(H, \mathbb{Z})$ if there is a finitely generated  submonoid $M$ of $H_{\chi}$ such that $H^\prime$ is finitely generated as a $M$-group, where $M$ acts via conjugation (on the right). Therefore, we just need to show that if $[\chi]\in S(H)$ and $p_i(H_{\chi})=p_i(H)=L_i$ for every $1\leq i \leq m$, then $H^\prime$ is finitely generated as a $M$-group for some  finitely generated  submonoid $M$ of $H_{\chi}$. 

Since $p_i(H_{\chi}) = L_i $ and each $L_i$ is finitely generated, there is a finitely generated monoid $M$ such that $M \subseteq H_{\chi}$ and $p_i(M) = L_i $ for $ 1 \leq i \leq m$. Then,
\[F_i = \langle A_i \rangle^{ L_i} =  \langle A_i \rangle^{p_i(M)} \quad \text{and} \quad N = F_1 \times \cdots \times F_m \subseteq {\big{\langle} \bigcup_{ 1 \leq i \leq m} A_i {\big\rangle}}^M \subseteq H^\prime.\]
Finally, since $H^\prime \slash N$ is a subgroup of the polycyclic-by-finite group $H\slash N$, we deduce that it is finitely generated, so there is a finite subset $D$ of $H^\prime$ such that $H^\prime = N \langle D \rangle$. Then, \[H^\prime =   {\big{\langle}\big{(} \bigcup_{ 1 \leq i \leq m} A_i\big{)} \cup D \big{\rangle}}^M.\]
\end{proof} 
	
In \cite{Desi2} the following result is proved.
	
\begin{theorem} \cite[Theorem B]{Desi2} Let $H < L_1 \times \cdots \times L_m$ be a finitely presented full subdirect product of non-abelian limit groups over free groups $L_1, \dots, L_m$ with $m \geq 1$. Then  \[\Sigma^1(H) =  \Sigma^1(H, \mathbb{Q}) = \{ [\chi] \in S(H) \mid p_i(H_{\chi}) =  p_i(H) = L_i \text{ for every }1 \leq i \leq m \}.\]
\end{theorem}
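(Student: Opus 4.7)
The plan is to establish the string of equalities by combining three facts: the forward inclusion $\Sigma^1(H,\mathbb{Q}) \subseteq \{[\chi] : p_i(H_\chi) = L_i \text{ for every } i\}$ from Theorem \ref{applications1} applied with $n=1$; the reverse inclusion $\{[\chi] : p_i(H_\chi) = L_i\} \subseteq \Sigma^1(H)$ from Lemma \ref{converse}; and the standard sandwich $\Sigma^1(H) = \Sigma^1(H,\mathbb{Z}) \subseteq \Sigma^1(H,\mathbb{Q})$, where the first equality is \cite[(1.3)]{B-Renz}. Running around this triangle forces all three sets to coincide.

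For the forward inclusion I would observe that free groups are Droms RAAGs, so each $L_i$ qualifies as a limit group over Droms RAAGs, and any non-abelian limit group over a free group is commutative-transitive and hence has trivial center. Thus the hypotheses of Theorem \ref{applications1} (Theorem B) are satisfied with $n=1$, and its conclusion reads $p_i(H_\chi) = p_i(H)$ for every $i$; full subdirectness of $H$ then gives $p_i(H)=L_i$.

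For the reverse inclusion the task is to verify the two hypotheses of Lemma \ref{converse}. Condition (1) holds because limit groups over free groups are free-by-(torsion-free nilpotent) by the free group analogue of Proposition \ref{free-by-(torsion-free nilpotent)} (originally \cite{Desi}), so one takes $F_i$ to be the free normal subgroup with $L_i/F_i$ torsion-free nilpotent, which is in particular polycyclic-by-finite. Condition (2), that $N := F_1\times\cdots\times F_m \subseteq H'$, is the substantive step; I would replicate the argument used inside the proof of Theorem \ref{applications1}: set $N_{i,j} := p_j(\ker p_i)$, apply \cite[Lemma 6.1]{Bridson} to get
\[ [N_{1,j},\dots,N_{j-1,j},N_{j+1,j},\dots,N_{m,j}] \subseteq H, \]
invoke the VSP criterion (available because $H$ is finitely presented) to conclude that each $p_{i,j}(H)$ has finite index in $L_i\times L_j$ and therefore each $N_j := \bigcap_{i\neq j} N_{i,j}$ has finite index in $L_j$, and finally exploit the nilpotence of $L_j/F_j$ to deduce, by iterating commutators, that $F_j$ is contained in $H'$ componentwise in the $L_j$-factor.

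The main obstacle is the verification of condition (2) of Lemma \ref{converse}. Both the finite presentation of $H$ (through the VSP criterion) and the non-abelian hypothesis on the $L_i$ (which yields trivial center, provides $\chi(L_i)<0$ as required inside Proposition \ref{limit1}, and makes the lower central series of $L_j/F_j$ nontrivial so the commutator-iteration step does not collapse) enter crucially at this point. Once $N\subseteq H'$ is secured, everything else is formal: Theorem \ref{applications1} bounds $\Sigma^1(H,\mathbb{Q})$ from above by the monoidal condition, Lemma \ref{converse} bounds $\Sigma^1(H)$ from below by the same condition, and the chain $\Sigma^1(H)=\Sigma^1(H,\mathbb{Z})\subseteq \Sigma^1(H,\mathbb{Q})$ closes the loop.
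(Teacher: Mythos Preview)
Your proposal is correct and follows essentially the same route as the paper's treatment (via Proposition~\ref{limit11} and Corollary~\ref{cor}): the forward inclusion comes from Proposition~\ref{limit1}/Theorem~\ref{applications1} with $n=1$, the reverse from Lemma~\ref{converse}, and the sandwich $\Sigma^1(H)=\Sigma^1(H,\mathbb{Z})\subseteq\Sigma^1(H,\mathbb{Q})$ closes the triangle; your verification of condition~(2) of Lemma~\ref{converse} also mirrors the argument in the proof of Theorem~\ref{applications1}.

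One small gap: Theorem~\ref{applications1} (and Proposition~\ref{limit1}) is stated only for $m\geq 2$, whereas the theorem you are proving allows $m\geq 1$. You should dispose of the case $m=1$ separately: then $H=L_1$ is itself a non-abelian limit group with trivial center, so $\Sigma^1(H)=\Sigma^1(H,\mathbb{Q})=\emptyset$ by Proposition~\ref{sigma1}, while the right-hand set is also empty since $p_1(H_\chi)=H_\chi\neq H$ for any non-zero character.
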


We show that the above result works in a more general setting.
	
\begin{proposition}  \label{limit11} Let  $H < L_1 \times \cdots \times L_m$ be a  finitely generated full subdirect product. Assume further that\\[5pt]
(1) the group $L_i$ is finitely generated and there is a free normal subgroup $F_i$ of $L_i$ such that $L_i\slash F_i$ is polycyclic-by-finite for each $1 \leq i \leq m$;\\[5pt]
(2) $N \coloneqq F_1 \times \cdots \times F_m \subseteq H^\prime$;\\[5pt]	
(3) for each $i$ there is a finite length free resolution of the trivial $\mathbb{Q} L_i$-module $\mathbb{Q}$ with all modules finitely generated and $\chi(L_i) < 0$.
	
Then  \[\Sigma^1(H) =  \Sigma^1(H, \mathbb{Q}) = \{ [\chi] \in S(H) \mid p_i(H_{\chi}) =  p_i(H) =  L_i \text{ for every }1 \leq i \leq m \}.\]
\end{proposition}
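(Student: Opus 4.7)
The plan is to close the cycle of inclusions
\[
\{[\chi]\in S(H) \mid p_i(H_\chi)=L_i \text{ for all }i\} \;\subseteq\; \Sigma^1(H) \;\subseteq\; \Sigma^1(H,\mathbb{Q}) \;\subseteq\; \{[\chi]\in S(H) \mid p_i(H_\chi)=L_i \text{ for all }i\},
\]
from which the two asserted equalities are immediate. The middle inclusion is a general fact that does not use any hypothesis of the Proposition: extending scalars along $\mathbb{Z}H_\chi \hookrightarrow \mathbb{Q}H_\chi$ (equivalently, tensoring with $\mathbb{Q}$ over $\mathbb{Z}$, which preserves exactness because $\mathbb{Z}H_\chi$ is $\mathbb{Z}$-free) turns a partial free resolution $F_1 \to F_0 \to \mathbb{Z} \to 0$ of $\mathbb{Z}H_\chi$-modules with $F_0, F_1$ finitely generated into one of the same shape for $\mathbb{Q}$ over $\mathbb{Q}H_\chi$, giving $\Sigma^1(H) = \Sigma^1(H,\mathbb{Z}) \subseteq \Sigma^1(H,\mathbb{Q})$.

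For the first inclusion, I would simply invoke Lemma \ref{converse}: its hypotheses match (1) and (2) of the Proposition verbatim, so nothing further is needed (in particular Lemma \ref{converse} itself verifies that $H$ is finitely generated so that $\Sigma^1(H)$ is defined).

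For the third (forward) inclusion, let $[\chi]\in\Sigma^1(H,\mathbb{Q})$ and apply Proposition \ref{limit1} with $n=1$. Hypotheses (1), (2), (3) of the Proposition are literally hypotheses (a), (b), (c) of Proposition \ref{limit1}, and $[\chi]\in\Sigma^1(H,\mathbb{Q})$ is hypothesis (d). Hence for each $1 \le i \le m$,
\[
\psi(H_\chi/N) = \psi(H/N), \quad \text{where } \psi\colon H/N \to L_i/F_i \text{ is induced by } p_i,
\]
which unpacks to $p_i(H_\chi)\,F_i = p_i(H)\,F_i$; since the subdirect product assumption gives $p_i(H)=L_i$, this reads $p_i(H_\chi)\,F_i = L_i$.

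It remains to upgrade this to $p_i(H_\chi) = L_i$, and this is the only step not already carried out upstream. The observation is that hypothesis (2), $N \subseteq H'$, together with the fact that every character vanishes on the commutator subgroup, forces $\chi(N)=0$, so $N \subseteq H_\chi$; projecting, $F_i = p_i(N) \subseteq p_i(H_\chi)$, and combined with $p_i(H_\chi)\,F_i = L_i$ this yields $p_i(H_\chi)=L_i$. I do not anticipate any genuine obstacle: the technical heart of the argument sits inside Proposition \ref{limit1} (the Noetherianness of $\mathbb{Q}(K_1)_{\mu_1}$ and the embedding $\mathbb{Q}[\widehat{Q}_{j_1}\times\cdots\times\widehat{Q}_{j_n}] \hookrightarrow W_{j_1,\dots,j_n}$) and inside Lemma \ref{converse} (the submonoid construction for $\Sigma^1$), both already established, so the Proposition reduces to assembling them with the elementary observation $N\subseteq H_\chi$.
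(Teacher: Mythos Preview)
Your proof is correct and follows essentially the same approach as the paper: the cycle of inclusions is closed by invoking Lemma \ref{converse} for one direction, Proposition \ref{limit1} with $n=1$ for the other, and the general inclusion $\Sigma^1(H)=\Sigma^1(H,\mathbb{Z})\subseteq\Sigma^1(H,\mathbb{Q})$ in between. In fact you are slightly more careful than the paper, which silently passes from the conclusion $\psi(H_\chi/N)=\psi(H/N)$ of Proposition \ref{limit1} to $p_i(H_\chi)=L_i$; your observation that $N\subseteq H'\subseteq\ker(\chi)$ forces $F_i\subseteq p_i(H_\chi)$ makes this passage explicit.
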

	
\begin{proof} For a general group $G$ we have that $\Sigma^1(G) = \Sigma^1(G, \mathbb{Z}) \subseteq \Sigma^1(G, \mathbb{Q})$. Then, applying  Proposition \ref{limit1} for $n = 1$ we have that
	\[\Sigma^1(H) \subseteq  \Sigma^1(H, \mathbb{Q}) \subseteq \{ [\chi] \in S(H) \mid 
	p_i(H_{\chi}) = p_i(H) = L_i \text{ for every }1 \leq i \leq m \}.\]
The converse \[\{ [\chi] \in S(H) \mid p_i(H_{\chi}) = p_i(H) =  L_i \text{ for every }1 \leq i \leq m \} \subseteq \Sigma^1(H)\] is Lemma \ref{converse}.
\end{proof}

\begin{lemma} \label{ses} Let $H$ be a group with a normal subgroup $A$ of type $F_{\infty}$, let $\chi \colon H \mapsto \mathbb{R}$ be a non-zero character such that $\chi(A) = 0$  and let $\chi_0 \colon \overline{H} \coloneqq H\slash A \mapsto \mathbb{R}$ be the character induced by $\chi_0$.\\[5pt]
a)  If $H$ is of type $FP_n$, then $[\chi] \in  \Sigma^n(H, \mathbb{Z})$ if and only if $[\chi_0] \in \Sigma^n(\overline{H}, \mathbb{Z})$.\\[5pt]
b) If $H$ is of type $F_n$, then $[\chi] \in \Sigma^n(H)$ if and only if $[\chi_0] \in \Sigma^n(\overline{H})$.
\end{lemma}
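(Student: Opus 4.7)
The plan is to exploit the short exact sequence $1 \to A \to H_\chi \to \overline{H}_{\chi_0} \to 1$ of monoids, which arises because $\chi(A)=0$ forces $A \subseteq H_\chi$, because $A$ is normal in the submonoid $H_\chi$, and because $H_\chi / A = \overline{H}_{\chi_0}$ by construction. The key structural fact is that $\mathbb{Z}H_\chi$ is free as a right (and left) $\mathbb{Z}A$-module, with basis a system of coset representatives for $A\backslash H_\chi$; this faithful flatness makes the usual change-of-rings and Lyndon--Hochschild--Serre arguments applicable despite $H_\chi$ being only a monoid rather than a group.

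For part (a), assume first that $[\chi_0] \in \Sigma^n(\overline{H},\mathbb{Z})$. Since $A$ is of type $F_\infty$, there is a free resolution $P_\bullet \to \mathbb{Z}$ over $\mathbb{Z}A$ with every $P_i$ finitely generated, and by hypothesis there is a projective resolution $Q_\bullet \to \mathbb{Z}$ over $\mathbb{Z}\overline{H}_{\chi_0}$ with $Q_j$ finitely generated for $j \leq n$. The standard double complex construction, which lifts each $Q_j$ to a free $\mathbb{Z}H_\chi$-module of the same rank and glues by the differentials of $P_\bullet$, produces a free $\mathbb{Z}H_\chi$-resolution of $\mathbb{Z}$ whose modules in degrees $\leq n$ are built from pieces with $i+j \leq n$, and hence are finitely generated. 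Conversely, given a $\mathbb{Z}H_\chi$-resolution $R_\bullet \to \mathbb{Z}$ finitely generated up to degree $n$, the complex $R_\bullet \otimes_{\mathbb{Z}A} \mathbb{Z}$ consists of finitely generated free $\mathbb{Z}\overline{H}_{\chi_0}$-modules, and the Lyndon--Hochschild--Serre spectral sequence $E^2_{p,q} = \mathrm{Tor}^{\mathbb{Z}\overline{H}_{\chi_0}}_p(\mathbb{Z}, H_q(A,\mathbb{Z})) \Rightarrow H_{p+q}(H_\chi,\mathbb{Z})$, combined with the finite generation of $H_q(A,\mathbb{Z})$ for all $q$, implies via a Bieri-style dimension-shifting argument that $\mathbb{Z}$ is of type $FP_n$ over $\mathbb{Z}\overline{H}_{\chi_0}$.

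For part (b) the cleanest route is to combine part (a) with the identity $\Sigma^n(G) = \Sigma^2(G) \cap \Sigma^n(G,\mathbb{Z})$ for $n \geq 2$ (recalled earlier from \cite{Renzthesis}), so that only the case $n=2$ requires separate treatment; the case $n=1$ is subsumed by part (a) because $\Sigma^1(H) = \Sigma^1(H,\mathbb{Z})$. For $n=2$ the $(\Rightarrow)$ direction is obtained by pushing a suitable finite presentation of $H$ down modulo $A$, showing that the resulting Cayley $2$-complex of $\overline{H}$ has simply connected $\overline{H}_{\chi_0}$-subcomplex. For $(\Leftarrow)$ one lifts a suitable finite presentation of $\overline{H}$ back to $H$ and adjoins a finite set of relations coming from a finite presentation of $A$ (available since $A$ is $F_\infty$, in particular $F_2$); the $H_\chi$-subcomplex of the resulting Cayley $2$-complex is then $1$-connected because the corresponding statement holds over $\overline{H}_{\chi_0}$ and because the fibrewise copies of the Cayley $2$-complex of $A$ are themselves simply connected.

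The main obstacle will be making the Lyndon--Hochschild--Serre spectral sequence rigorous in the monoid setting of $H_\chi$, but this hinges entirely on the freeness of $\mathbb{Z}H_\chi$ over $\mathbb{Z}A$, after which standard change-of-rings homological algebra applies verbatim. The topological argument for $n=2$ in part (b) is essentially a Bieri--Neumann--Strebel type construction from \cite{B-N-S} adapted to the quotient setting.
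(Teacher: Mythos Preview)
Your proposal is correct and follows essentially the same approach as the paper. The paper's proof simply cites \cite[Proposition~2.7]{Bieribook} for part (a) (noting that the same argument applies to the monoid short exact sequence $1 \to A \to H_\chi \to \overline{H}_{\chi_0} \to 1$) and \cite[Corollary~4.2]{Me} for the $n=2$ case of part (b), after making the identical reduction you make via $\Sigma^n(G) = \Sigma^2(G) \cap \Sigma^n(G,\mathbb{Z})$; what you have written is precisely an unpacking of those cited arguments.
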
 

\begin{proof}
a) Consider the short exact sequence of groups
$1 \to A \to H \to \overline{H} \to 1$. Then, by \cite[Proposition 2.7]{Bieribook}, $H$ is of type $FP_n$ if and only if $\overline{H}$ is of type $FP_n$. The same argument as in  \cite[Proposition 2.7]{Bieribook} applied to the short exact sequence of monoids $1 \to A \to  H_{\chi} \to \overline{H}_{\chi_0} \to 1$ implies that $[\chi] \in \Sigma^n(H, \mathbb{Z})$ if and only if $[\chi_0] \in \Sigma^n(\overline{H}, \mathbb{Z})$.\\[5pt]
b) Since for a group $G$ of type $F_n$ with $n \geq 2$ we have that $\Sigma^n (G) = \Sigma^2(G) \cap \Sigma^n(G, \mathbb{Z})$ and $\Sigma^1(G, \mathbb{Z}) = \Sigma^1(G)$, in order to prove b) it suffices to consider only the case $n = 2$. But the case $n = 2$ is precisely \cite[Corollary 4.2]{Me}.
\end{proof} 
Finally, we compute $\Sigma^1(H)$.

\begin{corollary}[=Corollary C]\label{cor} a) Let $H < L_1 \times \cdots \times L_m$ be a finitely presented full subdirect product of limit groups over Droms RAAGs  $L_1, \dots, L_m$ where each $L_i$ has trivial center. Then  \[\Sigma^1(H) =  \Sigma^1(H, \mathbb{Q}) = \{ [\chi] \in S(H) \mid p_i(H_{\chi}) =  p_i(H) = L_i \text{ for every }1 \leq i \leq m \}\]
and thus,
\[S(H) \setminus \Sigma^1(H) = \bigcup_{1 \leq i \leq m} S(H, \ker (p_i)),\]
where $p_i \colon H \mapsto L_i$ is the canonical projection.\\[5pt]
b)  If $H$ is  a finitely presented residually  Droms RAAG, then there exist finitely many subgroups $H_1, \dots, H_m$ of $H$ such that
\[S(H) \setminus \Sigma^1(H) = \bigcup_{1 \leq i \leq m} S(H, H_i).\]
\end{corollary}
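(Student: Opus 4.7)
The plan for part (a) is a two-step argument. The first equality is a direct invocation of Proposition \ref{limit11}, and I would verify its three hypotheses in the present setting. Condition (1) follows from Proposition \ref{free-by-(torsion-free nilpotent)}, which makes each $L_i$ free-by-(torsion-free nilpotent). Condition (3) follows from Lemma \ref{resolution} together with Lemma \ref{Euler-char}, noting that $L_i$ has trivial center and hence $\chi(L_i)<0$. Condition (2)---the inclusion $F_1\times\cdots\times F_m \subseteq H^\prime$---is precisely the verification carried out inside the proof of Theorem \ref{applications1}, so I would simply cite it.

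For the second equality of part (a), the key observation is that for each index $i$, one has $p_i(H_\chi)=L_i$ if and only if $\chi(\ker p_i)\neq 0$. One direction is the standard translation trick: if $x\in\ker p_i$ satisfies $\chi(x)>0$, then for any $h\in H$ a sufficiently large translate $hx^n$ lies in $H_\chi$ and still projects to $p_i(h)$, so $p_i(H_\chi)=p_i(H)=L_i$. The converse is immediate: if $\chi$ kills $\ker p_i$, then $\chi$ factors through a nonzero character $\bar\chi_i$ of $L_i=p_i(H)$, and hence $p_i(H_\chi)=(L_i)_{\bar\chi_i}$ is a proper submonoid of $L_i$. Combining with the first equality yields $S(H)\setminus\Sigma^1(H) = \bigcup_{i=1}^m S(H,\ker p_i)$.

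For part (b), the plan is to reduce to part (a) by modding out the abelian factor in a neat embedding. I would fix a neat full subdirect embedding $H < L_0\times L_1\times\cdots\times L_m$ where $L_0=\mathbb{Z}^n$ and each $L_i$ with $i\geq 1$ is a limit group over Droms RAAGs with trivial center, as provided by \cite{Jone}. Let $K\coloneqq H\cap L_0$; because each $L_i$ with $i\geq 1$ has trivial center, $K$ lies in the center of $H$, and being of finite index in $\mathbb{Z}^n$ it is free abelian of finite rank, hence of type $F_\infty$. The quotient $\bar H\coloneqq H/K$ is then finitely presented (as $K$ is finitely generated) and embeds as a finitely presented full subdirect product in $L_1\times\cdots\times L_m$.

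If $\chi(K)\neq 0$, Lemma \ref{central}(a) places $[\chi]$ in $\Sigma^1(H,\mathbb{Z})=\Sigma^1(H)$. If instead $\chi(K)=0$, then $\chi$ descends to a character $\bar\chi$ of $\bar H$, and Lemma \ref{ses}(a) gives that $[\chi]\in\Sigma^1(H)$ is equivalent to $[\bar\chi]\in\Sigma^1(\bar H)$; part (a) applied to $\bar H$ then identifies the complement as the union of the sets $S(\bar H,\ker\bar p_i)$. Pulling back along the quotient map $\pi\colon H\to\bar H$ and using $\ker p_i=\pi^{-1}(\ker\bar p_i)$ for $i\geq 1$, I would take $H_i\coloneqq\ker p_i$ for $i=1,\dots,m$; since $K\subseteq H_i$, the case $\chi(K)\neq 0$ is automatically excluded from each $S(H,H_i)$, so both cases merge into the single union $\bigcup_{i=1}^m S(H,H_i)$. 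The only delicate point is checking that $\bar H$ really satisfies the hypotheses of part (a)---in particular being a full subdirect product---which follows from the surjectivity of each $p_i$ on $H$ together with the fact that $H\cap L_i$ embeds into $\bar H\cap L_i$ for $i\geq 1$.
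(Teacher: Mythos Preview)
Your proposal is correct and follows essentially the same approach as the paper. The only differences are expository: you give a self-contained argument for the equivalence $p_i(H_\chi)=L_i \iff \chi(\ker p_i)\neq 0$ (the paper simply cites \cite[Lemma~5.10]{Desi2}), and you spell out why $\bar H$ is a full subdirect product and why $H_i=\ker p_i$ coincides with the preimage of $\ker\bar p_i$, details the paper leaves implicit.
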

	
\begin{proof} a) The first equality is a corollary of Proposition \ref{limit11}. The fact that conditions (1), (2) and (3) from Proposition \ref{limit11} hold are established in the proof of Theorem \ref{applications1}.

By \cite[Lemma~5.10]{Desi2}, $ p_i(H_{\chi}) =  p_i(H)$ is equivalent to $\chi( \ker (p_i) )\not= 0$. Then, $[\chi] \in S(H) \setminus \Sigma^1(H)$ if and only if $ p_i(H_{\chi}) \not=  p_i(H)$ for some $i$. The latter is equivalent to $[\chi] \in S(H, \ker ( p_i))$.\\[5pt]
b) A finitely presented residually  Droms RAAG $H$ is a subgroup of a finite direct product of limit groups over Droms RAAGs, so $H < L_0 \times L_1 \times \cdots \times L_m$ is a full subdirect product where $L_0= \mathbb{Z}^{ k}$ for some $ k \geq 0$ and $L_1, \dots, L_m$ are limit groups over Droms RAAGs with trivial center for $1 \leq i \leq m$. Furthermore, we can assume that $L_0 \cap H$ has finite index in $L_0$. By construction $H \cap L_0$ is a central in $H$, so by Lemma \ref{central}, for every $n \geq 1$
\[ \{ [\chi] \in S(H) ~| ~\chi(H \cap L_0) \not= 0 \} \subseteq \Sigma^1(H). \]
Thus, if $[\chi] \in S(H) \setminus \Sigma^1(H)$, we have that $\chi(H \cap L_0) = 0$ and $\chi$ induces a character $$\chi_0 \colon \overline{H} \coloneqq H\slash (H \cap L_0) \mapsto \mathbb{R}.$$ 
Note that by Lemma \ref{ses},   \[ [\chi] \in S(H) \setminus \Sigma^1(H) \hbox{ if and only if }[\chi_0] \in S(\overline{H}) \setminus \Sigma^1(\overline{H}).\]
Since $ \overline{H} < L_1 \times \cdots \times L_m$ is a full subdirect product we can apply part a) to deduce that 
\[S(\overline{H}) \setminus \Sigma^1(\overline{H}) = \bigcup_{1 \leq i \leq m} S(\overline{H}, \ker (\overline{p}_i)),\]
where $\overline{p}_i \colon \overline{H} \mapsto L_i$ is the canonical projection.
Then, if we take $H_i$ to be the preimage of $\ker (\overline{p}_i)$ in $H$, we have that
\[S(H) \setminus \Sigma^1(H) = \bigcup_{1 \leq i \leq m} S(H, H_i).\]
\end{proof}

The last part of the section is related to proving that $\Sigma^2(H, \mathbb{Q})_{dis}= \Sigma^2(H, \mathbb{Z})_{dis}= \Sigma^2(H)$. In \cite{Desi2} it is shown that the Virtual Surjection Conjecture implies the discrete case of the Monoidal Virtual Surjection Conjecture:

\begin{theorem} \cite[Theorem D]{Desi2}
If the Virtual Surjection Conjecture holds in dimension $n$ and $\chi$ is a discrete character, then the Monoidal Virtual Surjection Conjecture holds for $\chi$. In particular, if $n = 2$ and $\chi$ is a discrete character, then the Monoidal Virtual Surjection Conjecture holds for $\chi$. Thus, \[\Sigma^2(H)_{dis} = \Sigma^2(H, \mathbb{Z})_{dis} = \Sigma^2(H, \mathbb{Q})_{dis}, \] where for $T \in \{ \Sigma^2(H, \mathbb{Z}), \Sigma^2(H, \mathbb{Q}), \Sigma^2(H) \}$ we write $T_{dis} = \{ [\chi] \in T \mid \chi \text{ is discrete} \}$.
\end{theorem}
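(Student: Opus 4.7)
The forward direction of the Monoidal Virtual Surjection Conjecture---that $[\chi]\in\Sigma^n(H,\mathbb{Q})$ implies the projection condition---is established in the free-group setting as \cite[Theorem~A]{Desi2}, and its Droms RAAG analogue is our Theorem~\ref{applications1} above. It therefore suffices to prove the reverse direction for discrete characters under the hypothesis that the Virtual Surjection Conjecture holds in dimension $n$: given discrete $\chi$ satisfying $p_{j_1,\dots,j_n}(H_\chi)=p_{j_1,\dots,j_n}(H)$ for all $1\le j_1<\dots<j_n\le m$, one must show $[\chi]\in\Sigma^n(H)$. Since $\Sigma^n(H)\subseteq\Sigma^n(H,\mathbb{Z})\subseteq\Sigma^n(H,\mathbb{Q})$, this yields the stated triple equality.

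The plan is to translate the monoidal hypothesis into a genuine group-theoretic statement about $K=\ker\chi$, and then to feed $K$ into the VSC. The first step is an elementary equivalence: for a nonzero character, the projection condition $p_{j_1,\dots,j_n}(H_\chi)=p_{j_1,\dots,j_n}(H)$ is equivalent to $\chi$ being nonzero on $\ker(p_{j_1,\dots,j_n})$. One direction is obtained by translating any given coset representative by a suitable power of an element on which $\chi$ is positive; the converse follows because a nontrivial character cannot be nonnegative on an entire group. The second step uses discreteness. Write $H=K\rtimes\langle t\rangle$ with $\chi(t)$ a generator of $\chi(H)\cong\mathbb{Z}$, and for a fixed $n$-tuple pick $g\in\ker(p_{j_1,\dots,j_n})$ with $\chi(g)=d>0$. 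Then $gt^{-d}\in K$, so $p_{j_1,\dots,j_n}(t)^d=p_{j_1,\dots,j_n}(gt^{-d})^{-1}\in p_{j_1,\dots,j_n}(K)$, giving $[p_{j_1,\dots,j_n}(H)\colon p_{j_1,\dots,j_n}(K)]\le d$. Since $H$ is of type $FP_n$, the result from \cite{preprint1} quoted immediately after Theorem~\ref{applications1} gives $p_{j_1,\dots,j_n}(H)$ of finite index in $L_{j_1}\times\cdots\times L_{j_n}$, and hence so is $p_{j_1,\dots,j_n}(K)$. Thus the subgroup $K\le L_1\times\cdots\times L_m$ is virtually surjective on $n$-tuples.

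Each $L_i$ is of type $F_\infty$, so the VSC in dimension $n$ applied to $K$ yields that $K$ is of type $F_n$. The group $H$ itself is of type $F_n$ (finitely presented together with $FP_n$), and $K\supseteq H'$ because $H/K\cong\mathbb{Z}$ is abelian, so Theorem~\ref{BRhomotopic}(a) applied to the pair $(H,K)$ gives $S(H,K)=\{[\chi],[-\chi]\}\subseteq\Sigma^n(H)$; in particular $[\chi]\in\Sigma^n(H)$, which together with the forward direction proves the MVSC for discrete $\chi$. The equality $\Sigma^2(H)_{dis}=\Sigma^2(H,\mathbb{Z})_{dis}=\Sigma^2(H,\mathbb{Q})_{dis}$ is then unconditional, since the VSC in dimension $2$ is the VSP Criterion of Bridson--Howie--Miller--Short. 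The main conceptual point---and the only real obstacle once it is identified---is that the discreteness of $\chi$ permits the monoidal projection condition to be converted, via the single element $t$, into virtual $n$-tuple surjectivity of the honest subgroup $\ker\chi$; after that conversion, the VSC and Theorem~\ref{BRhomotopic}(a) finish the argument mechanically.
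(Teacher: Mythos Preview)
Your argument is correct and follows the same overall strategy as the paper's proof of the generalization Theorem~\ref{5-conditions} (the cited statement itself is quoted from \cite{Desi2} without proof here): reduce to showing that $K=\ker\chi$ is of type $F_n$, establish that $p_{j_1,\dots,j_n}(K)$ has finite index in $L_{j_1}\times\cdots\times L_{j_n}$, and then invoke the Virtual Surjection Conjecture together with Theorem~\ref{BRhomotopic}.

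There are two small differences worth noting. First, the paper derives the finite-index statement less directly: it first appeals to Lemma~\ref{converse} and \cite[Lemma~5.10]{Desi2} to conclude $\{[\chi],[-\chi]\}\subseteq\Sigma^1(H)$ and hence that $N_0=\ker\chi$ is finitely generated, and then argues that $p_{j_1,\dots,j_n}(H)=\bigcup_{i\ge 0}p_{j_1,\dots,j_n}(N_0)\,p_{j_1,\dots,j_n}(t)^i$ being a group forces $p_{j_1,\dots,j_n}(N_0)$ to have finite index. Your computation with a single element $g\in\ker(p_{j_1,\dots,j_n})$ achieves the same conclusion in one step. Second, the paper applies the VSC to $N_0$ viewed inside $p_1(N_0)\times\cdots\times p_m(N_0)$, which is why condition~(4) of Theorem~\ref{5-conditions} (that finitely generated subgroups of $L_i$ are of type $F_n$) is needed; you apply the VSC to $K$ inside $L_1\times\cdots\times L_m$ directly, using only that each $L_i$ is of type $F_n$. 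Both routes are valid in the limit-group setting, and yours is marginally more economical.
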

 
We first generalize this theorem to the following result and then we apply it to our case.
 
\begin{theorem} \label{5-conditions}  Let $1 \leq n \leq m, m \geq 2$ and  $H < L_1 \times \cdots \times L_m$ be  a full subdirect product of type $F_n$. Assume further that\\[5pt]
(1) there is a free normal subgroup $F_i$ of $L_i$ such that $L_i\slash F_i$ is polycyclic-by-finite for each $1 \leq i \leq m$;\\[5pt]
(2) $N \coloneqq F_1 \times \cdots \times F_m \subseteq H^\prime$;\\[5pt]
(3) for each $i$ there is a finite length free resolution of the trivial $\mathbb{Q} L_i$-module $\mathbb{Q}$ with all modules finitely generated and $\chi(L_i) < 0$;\\[5pt]
(4) every finitely generated subgroup of $L_i$ is of type $F_n$ for each $1 \leq i \leq m$;\\[5pt]
(5) suppose that the Virtual Surjection Conjecture holds in dimension $n$.

Then \[[\chi] \in \Sigma^n(H, \mathbb{Q})_{dis} = \Sigma^n(H, \mathbb{Z})_{dis} = \Sigma^n(H)_{dis}\] if and only if 	
\begin{equation} \label{monod0} p_{j_1, \dots, j_n}(H_{\chi}) = p_{j_1, \dots, j_n}(H) \quad \text{for all} \quad 1 \leq j_1 < \dots  < j_n \leq m, \end{equation}
where $p_{j_1, \dots, j_n} \colon H \mapsto L_{j_1} \times \cdots \times L_{j_n}$ is  the canonical projection. In particular, since the Virtual Surjection Conjecture holds in dimension $2$, the result holds for $n = 2$ without further assumptions. \end{theorem}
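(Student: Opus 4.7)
The plan is to mimic the argument of \cite[Theorem~D]{Desi2}, which treats the special case of limit groups over free groups, but with the specific properties of those groups replaced throughout by the abstract hypotheses (1)--(5). Concretely, I would establish the cycle of inclusions
\[ \Sigma^n(H)_{dis} \subseteq \Sigma^n(H,\mathbb{Z})_{dis} \subseteq \Sigma^n(H,\mathbb{Q})_{dis} \subseteq \mathcal{M} \subseteq \Sigma^n(H)_{dis}, \]
where $\mathcal{M}$ denotes the set of $[\chi] \in S(H)_{dis}$ satisfying the monoidal equality (\ref{monod0}). The first two inclusions are automatic, so the real content lies in the third and the fourth.

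For the third inclusion I would apply Proposition~\ref{limit1} directly: conditions (1), (2), (3) of the present theorem are exactly hypotheses (a), (b), (c) of that proposition, and $[\chi] \in \Sigma^n(H,\mathbb{Q})_{dis}$ provides (d). Its conclusion $\psi(H_\chi/N) = \psi(H/N)$ then upgrades to the monoidal equality $p_{j_1,\dots,j_n}(H_\chi) = p_{j_1,\dots,j_n}(H)$ by the same routine argument used in the proof of Theorem~\ref{applications1}, since $N \subseteq H_\chi$ and $\psi$ is induced by $p_{j_1,\dots,j_n}$ modulo $N$. Note that this step uses none of hypotheses (4) or (5).

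The fourth inclusion is the substantive direction. Assume $\chi$ is discrete and satisfies (\ref{monod0}). Because $\Sigma^n(H) = \Sigma^2(H) \cap \Sigma^n(H,\mathbb{Z})$ and the $n=2$ case is handled by the VSP criterion, it suffices to prove $[\chi] \in \Sigma^n(H,\mathbb{Z})$. Following \cite[Theorem~D]{Desi2}, I would pick $t \in H$ with $\chi(t)=1$, write $H_\chi$ as an ascending union of conjugates of a finitely generated subgroup of $\ker\chi$, and use the monoidal condition together with the $FP_n$-criterion of \cite{preprint1} to verify that the projections $p_{j_1,\dots,j_n}$ remain virtually surjective onto $L_{j_1}\times\cdots\times L_{j_n}$ when restricted to this subgroup. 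Hypothesis (4) ensures that the relevant finite-index subgroups of the $L_i$ retain type $F_n$, and hypothesis (5) (the Virtual Surjection Conjecture in dimension $n$) then yields that the subgroup is itself of type $F_n$; standard $\Sigma$-theory machinery converts this finiteness into $[\chi] \in \Sigma^n(H,\mathbb{Z})_{dis}$. The main technical obstacle will be this ascending-chain construction, i.e.\ transferring the monoidal equality on the monoid $H_\chi$ into virtual surjectivity of a concrete subgroup to which the Virtual Surjection Conjecture applies; hypotheses (1)--(4) suffice exactly as the corresponding specific facts sufficed in \cite{Desi2}. The final remark of the theorem is then immediate: for $n=2$, condition (4) holds because finitely generated subgroups of limit groups over Droms RAAGs are themselves limit groups over Droms RAAGs (hence finitely presented) by \cite{Montse}, and condition (5) is the VSP criterion of \cite{Bridson2}.
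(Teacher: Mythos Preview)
Your forward direction (the third inclusion) is correct and identical to the paper's.

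For the converse direction, your overall target is right---one shows that a concrete subgroup is of type $F_n$ via the Virtual Surjection Conjecture and then invokes $\Sigma$-theory---but the route you sketch is both more complicated than necessary and misses a step. The paper does \emph{not} pass through $\Sigma^n(H)=\Sigma^2(H)\cap\Sigma^n(H,\mathbb{Z})$ or through any ascending-union construction inside $H_\chi$. Instead it works directly with the full kernel $N_0\coloneqq\ker\chi$ and shows $N_0$ is of type $F_n$, which by Theorem~\ref{BRhomotopic}(a) immediately gives $\{[\chi],[-\chi]\}\subseteq\Sigma^n(H)$.

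Two points of your sketch would need repair. First, you never establish that $N_0$ is finitely generated, which is a prerequisite for applying the Virtual Surjection Conjecture to it. The paper obtains this from the $n=1$ case: by \cite[Lemma~5.10]{Desi2} the condition $p_j(H_\chi)=p_j(H)$ is equivalent to $\chi(\ker p_j)\neq 0$, hence symmetric in $\chi\leftrightarrow-\chi$; so Lemma~\ref{converse} gives $\{[\chi],[-\chi]\}\subseteq\Sigma^1(H)$, and then Theorem~\ref{BRhomotopic} yields $N_0$ finitely generated. Second, the ``ascending union of conjugates'' picture is not what transfers the monoidal equality to virtual surjectivity. The actual mechanism is a one-line group-theoretic observation: writing $H=N_0\rtimes\langle t\rangle$, the equality $p_{j_1,\dots,j_n}(H)=p_{j_1,\dots,j_n}(H_\chi)=\bigcup_{i\geq 0}p_{j_1,\dots,j_n}(N_0)\,p_{j_1,\dots,j_n}(t)^i$ forces $p_{j_1,\dots,j_n}(N_0)$ to have finite index in $p_{j_1,\dots,j_n}(H)$ (a group cannot be a one-sided coset union unless the base subgroup has finite index). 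Combined with \cite[Theorem~B]{preprint1} this gives finite index in $L_{j_1}\times\cdots\times L_{j_n}$, and then one views $N_0$ as a subdirect product of $p_1(N_0)\times\cdots\times p_m(N_0)$, invokes hypothesis~(4) on each factor, and applies hypothesis~(5). Your final remark about the $n=2$ case concerns Corollary~\ref{applications12} rather than the present theorem; here only condition~(5) becomes automatic.
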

	
\begin{proof} 
The conditions we impose here on $H, L_1, \dots , L_m$ are stronger than the ones imposed in Proposition \ref{limit1}. Suppose that $[\chi] \in \Sigma^n(H, \mathbb{Q})_{dis}$. Then, by Proposition \ref{limit1}, \[p_{j_1, \dots, j_n}(H_{\chi}) = p_{j_1, \dots, j_n}(H).\]
For the converse, assume that $\chi \colon H \mapsto \mathbb{R}$ is a non-zero discrete character and $p_{j_1, \dots, j_n}(H_{\chi}) = p_{j_1, \dots, j_n}(H) \quad \text{for all} \quad 1 \leq j_1 < \dots  < j_n \leq m$. Since in general we have that $\Sigma^n(H) \subseteq \Sigma^n(H, \mathbb{Z}) \subseteq \Sigma^n(H, \mathbb{Q})$, it suffices to show that
$[\chi] \in \Sigma^n(H)$. If we show that $N_0 \coloneqq \ker (\chi)$ is of type $F_n$, then by Theorem \ref{BRhomotopic} (a) we have that both $[\chi]$ and $[- \chi]$ belong to $\Sigma^n(H)$.

Note that by  Lemma \ref{converse} the result holds for $n = 1$. Furthermore, by \cite[Lemma~5.10]{Desi2} $p_{j}(H_{\chi}) = p_j(H)$ is equivalent to $\chi(Ker(p_j))  \not= 0$. This implies that  $p_{j}(H_{\chi}) = p_j(H)$ is equivalent to  $p_{j}(H_{-\chi}) = p_j(H)$. Then, applying  Lemma \ref{converse} again we have that $\{ [\chi], [- \chi] \} \subseteq \Sigma^1(H)$. Since $\chi$ is a discrete character we deduce by Theorem \ref{BRhomotopic} (a)  that
 $N_0$ is finitely generated.

 Let us write $H \coloneqq N_0 \rtimes \langle t \rangle$ with $\chi(t) > 0$. Since \[p_{j_1, \dots, j_n}(H) =  p_{j_1, \dots, j_n}(H_{\chi}) = \bigcup_{i \geq 0} p_{j_1, \dots, j_n}(N_0) p_{j_1, \dots, j_n}(t)^i\] is a group, $p_{j_1, \dots, j_n}(N_0)$ has finite index in $p_{j_1, \dots, j_n}(H)$. The group $H$ is of type $FP_n$, so by \cite[Theorem B]{preprint1}, $p_{j_1, \dots, j_n}(H)$ has finite index in $L_{j_1} \times \cdots \times L_{j_n}$. Hence, $ p_{j_1, \dots, j_n}(N_0)$ has also finite index in $L_{j_1} \times \cdots \times L_{j_n}$. In particular, $ p_{j_1, \dots, j_n}(N_0)$ has finite index in $p_{j_1}(N_0)  \times \cdots \times p_{j_n}(N_0)$. 

We consider $N_0$ as a finitely generated subdirect product of $p_1(N_0) \times \cdots \times p_m(N_0)$. The group $N_0$ is finitely generated, so $p_i(N_0)$ is a finitely generated subgroup of $L_i$. Then, by condition (4), $p_i(N_0)$ is of type $F_n$. Thus, if the Virtually Surjective Conjecture holds in dimension $n$ for the subdirect product $N_0 < p_1(N_0) \times \cdots \times p_m(N_0)$,  we deduce that $N_0$ is of type $F_n$.

Finally, the Virtual Surjection Conjecture holds for $n = 2$ (see \cite{Bridson2}).
\end{proof}
	
\begin{corollary}[= Theorem D] \label{applications12} 
a) Let $1\leq n \leq m$ and $m\geq 2$ be integers and $H < L_1 \times \cdots \times L_m$ be a  full subdirect product of limit groups over Droms RAAGs  $L_1, \dots, L_m$ such that each $L_i$ has trivial center and $H$ is of type $FP_n$ and finitely presented. Suppose that the Virtual Surjection Conjecture holds in dimension $n$. Then \[[\chi] \in \Sigma^n(H, \mathbb{Q})_{dis} = \Sigma^n(H, \mathbb{Z})_{dis} = \Sigma^n(H)_{dis}\] if and only if 	
\begin{equation} \label{monod0} p_{j_1, \dots, j_n}(H_{\chi}) = p_{j_1, \dots, j_n}(H) \quad \text{for all} \quad 1 \leq j_1 < \dots  < j_n \leq m, \end{equation} where $p_{j_1, \dots, j_n}\colon H \mapsto L_{j_1} \times \cdots \times L_{j_n}$ is  the canonical projection. In particular, since the  Virtual Surjection Conjecture holds in dimension $2$, the result holds for $n = 2$ without further assumptions.
 
b) If $H$ is  a finitely presented residually  Droms RAAG, then there exist finitely many subgroups $H_{i,j}$ of $H$, where $ 1 \leq i < j \leq n$,  such that
\[S(H)_{dis} \setminus \Sigma^2(H)_{dis} = \bigcup_{1 \leq i < j  \leq m} S(H, H_{i,j})_{dis}.\] 
\end{corollary}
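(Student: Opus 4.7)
The plan is to deduce part (a) directly from Theorem \ref{5-conditions}, and then to extract part (b) from part (a) by the same central-quotient reduction that was used in Corollary \ref{cor}(b).

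For part (a) I would verify the five hypotheses of Theorem \ref{5-conditions} in the present setting. Hypotheses (1)--(3) are exactly the three properties of the $L_i$ already established inside the proof of Theorem \ref{applications1}: the free-by-(torsion-free nilpotent) structure of each $L_i$ comes from Proposition \ref{free-by-(torsion-free nilpotent)}, the inclusion $F_1\times\cdots\times F_m\subseteq H'$ is \cite[Lemma 6.1]{Bridson} combined with the argument before \cite[Theorem 6.2]{Jone} (this is where the trivial-centre hypothesis enters), and the finite-length free resolution of $\mathbb{Q}$ over $\mathbb{Q}L_i$ together with $\chi(L_i)<0$ come from Lemmas \ref{resolution} and \ref{Euler-char}. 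Hypothesis (4), which asks that every finitely generated subgroup of $L_i$ be of type $F_n$, is immediate from Theorem \ref{Limit-subgroup-ICE}: such a subgroup is again a limit group over a Droms RAAG and therefore finitely presented and of type $FP_\infty$ by the results of Casals-Ruiz, Duncan and Kazachkov recalled in the introduction. Hypothesis (5) is the standing assumption on the Virtual Surjection Conjecture in dimension $n$, and the ``in particular'' clause follows from the VSP Criterion of \cite{Bridson2}.

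For part (b), I would take a neat embedding $H<L_0\times L_1\times\cdots\times L_m$ with $L_0=\mathbb{Z}^k$ abelian and each $L_i$ ($i\geq 1$) a centreless limit group over Droms RAAGs, so that $H\cap L_0$ is central in $H$ and of finite index in $L_0$. A discrete character $[\chi]\in S(H)_{dis}$ splits into two cases. If $\chi(H\cap L_0)\neq 0$, then since $H$ is finitely presented (hence of type $F_2$), Lemma \ref{central}(b) places $[\chi]$ in $\Sigma^2(H)_{dis}$, so it does not contribute to the complement. If instead $\chi(H\cap L_0)=0$, then $\chi$ descends to a discrete character $\bar\chi$ on $\overline{H}\coloneqq H/(H\cap L_0)$, which is a finitely presented full subdirect product in $L_1\times\cdots\times L_m$ of the type handled by part (a); Lemma \ref{ses}(b), applied with the finitely generated abelian (hence of type $F_\infty$) normal subgroup $A=H\cap L_0$, yields $[\chi]\in\Sigma^2(H)$ if and only if $[\bar\chi]\in\Sigma^2(\overline{H})$. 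Applying part (a) with $n=2$ to $\overline{H}$, combined with the straightforward two-factor analogue of \cite[Lemma 5.10]{Desi2}, namely $\bar p_{i,j}(\overline{H}_{\bar\chi})=\bar p_{i,j}(\overline{H})$ iff $\bar\chi(\ker\bar p_{i,j})\neq 0$, expresses the complement of $\Sigma^2(\overline{H})_{dis}$ in $S(\overline{H})_{dis}$ as the union of the sets $S(\overline{H},\ker\bar p_{i,j})_{dis}$ over $1\leq i<j\leq m$. Defining $H_{i,j}$ to be the preimage in $H$ of $\ker\bar p_{i,j}$ (which automatically contains $H\cap L_0$) pulls this back to the desired decomposition
\[
S(H)_{dis}\setminus\Sigma^2(H)_{dis}=\bigcup_{1\leq i<j\leq m}S(H,H_{i,j})_{dis}.
\]

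No substantively new obstacle appears: every nontrivial input is already available in the paper, so the real work consists in checking that the hypotheses of Theorem \ref{5-conditions} transfer from limit groups over free groups to limit groups over Droms RAAGs with trivial centre, and in executing the central-quotient reduction for part (b). The only small piece that is not already written down is the two-factor analogue of \cite[Lemma 5.10]{Desi2}, but this is a one-line observation: for any surjection $\pi\colon G\twoheadrightarrow Q$ and any non-zero character $\chi$ on $G$, the submonoid $\pi(G_\chi)$ equals $Q$ exactly when $\chi$ is non-trivial on $\ker\pi$, because $\chi(\ker\pi)$ is a subgroup of $\mathbb{R}$ and is therefore either trivial or unbounded above.
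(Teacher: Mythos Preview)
Your proposal is correct and matches the paper's approach essentially verbatim: part (a) is deduced directly from Theorem \ref{5-conditions} (the paper's proof of (a) is even more terse, simply citing that theorem), and part (b) uses exactly the central-quotient reduction from Corollary \ref{cor}(b), invoking Lemmas \ref{central} and \ref{ses} to pass to $\overline{H}=H/(H\cap L_0)$, applying part (a) with $n=2$, and then defining $H_{i,j}$ as the preimage of $\ker\bar p_{i,j}$ via \cite[Lemma 5.10]{Desi2}. Your added justification of the two-factor form of that lemma and your explicit check that $\overline{H}$ is a finitely presented full subdirect product are minor elaborations the paper leaves implicit.
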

	
\begin{proof} 
a) This follows from Theorem \ref{5-conditions}. Note that by \cite[Theorem B]{preprint1}, $p_{j_1, \dots, j_n}(H)$ has finite index  in $L_{j_1} \times \cdots \times L_{j_n}$ for every $1 \leq j_1 < \dots  < j_n \leq m$.\\[5pt]
b) By \cite{Baumslag} $H$ can be viewed as a full subdirect product of $L_0 \times L_1 \times \cdots \times L_m$, where $L_0= \mathbb{Z}^k$ for some $k \geq 0$, $L_0 \cap H$ has finite index in $L_0$ and $L_1, \dots, L_m$ are limit groups over Droms RAAGs such that each $L_i$ has trivial center for $1 \leq i \leq m$. Since $H \cap L_0$ is central in $H$, by Lemma \ref{central} for every $n \geq 1$,
\[ \{ [\chi] \in S(H) \mid \chi(H \cap L_0) \not= 0 \} \subseteq \Sigma^n(H). \]
Thus, if $[\chi] \in S(H) \setminus \Sigma^2(H)$ we have that $\chi(H \cap L_0) = 0$ and $\chi$ induces a character \[\chi_0 \colon \overline{H} \coloneqq H\slash (H \cap L_0) \mapsto \mathbb{R}.\] 
By Lemma \ref{ses},  \[ [\chi] \in S(H) \setminus \Sigma^2(H) \hbox{ if and only if }[\chi_0] \in S(\overline{H}) \setminus \Sigma^2(\overline{H}).\]

Since $ \overline{H} < L_1 \times \cdots \times L_m$, we can apply part a) to deduce that 
\[ [\chi_0] \in S(\overline{H})_{dis} \setminus \Sigma^2(\overline{H})_{dis} \hbox{ if and only if }
 p_{i,j} (\overline{H}) \not= p_{i,j}(\overline{H}_{\chi_0})  \hbox{ for some } 1 \leq i < j \leq m,\]
where $\overline{p}_{i,j} \colon \overline{H} \mapsto L_i \times L_j$ is the canonical projection.
By \cite[Lemma~5.10]{Desi2}, $  \overline{p}_{i,j}  (\overline{H}) \not=  \overline{p}_{i,j}  (\overline{H}_{\chi_0})$ is equivalent to $\chi_0( \ker(  \overline{p}_{i,j} )) = 0$. Then, we can define $H_{i,j}$ as the preimage of $\ker(   \overline{p}_{i,j}  )$ in $H$ and obtain that
\[S(H)_{dis} \setminus \Sigma^2(H)_{dis} = \bigcup_{1 \leq i \leq m} S(H, H_{i,j})_{dis}.\] 
\end{proof} 
	 
\section{More on finitely presented subdirect products} \label{sect-f}

We first state some known results on $\Sigma$-invariants related to graphs of groups and direct products.

\subsection{Some technical results on $\Sigma$-invariants : connections with actions on trees and $\Sigma$-invariants of direct products} 

\begin{theorem} \cite{Sch} \label{Bass-Serre-1}
Suppose that $G$ acts on a tree $T$ such that the quotient graph $T\slash G$ is finite and $\chi \colon G \mapsto \mathbb{R}$ is a non-zero character. Suppose further that the restriction $\chi_{\sigma} \colon G_{\sigma}  \mapsto \mathbb{R}$ of $\chi$ to the stabilizer $G_{\sigma}$ of a vertex or an edge $\sigma$ of $T$ is non-zero.\\[5pt]
i) If $n \geq  1,$ if $[\chi_v ] \in \Sigma^n (G_v , M )$ for all vertices $v$ of $T$ and if $[\chi_e ] \in \Sigma^{n-1} (G_e , M )$ for all edges $e$ of $T$, then $[\chi] \in \Sigma^n (G , M )$.\\[5pt] 
ii) If $n \geq 0,$ if $[\chi] \in \Sigma^n (G , M )$ and if $[\chi_e ] \in \Sigma^{n} (G_e , M )$ for all edges $e$ of $T$, then $[\chi_v ] \in \Sigma^n (G_v , M )$  for all vertices $v$ of $T$.\\[5pt]
iii) If $n \geq 1,$ if $[\chi] \in \Sigma^n (G , M )$ and if $[\chi_v ] \in \Sigma^{n-1} (G_v , M )$ for all vertices $v$ of $T$, then $[\chi_e ] \in \Sigma^{n-1} (G_e , M )$ for all edges $e$ of $T$.
\end{theorem}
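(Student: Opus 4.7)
My plan is to derive all three parts simultaneously from a Mayer--Vietoris long exact sequence associated with the action of $G$ on the tree $T$, combined with the Bieri--Renz homological characterization of the invariant. Recall that $[\chi] \in \Sigma^n(G,M)$ can be reformulated as the vanishing of $\mathrm{Tor}_i^{DG}(\widehat{DG}_{-\chi}, M)$ for $0 \le i \le n$, where $\widehat{DG}_{-\chi}$ is the Novikov completion of $DG$ at $-\chi$ (equivalently, that $M$ is of type $FP_n$ over the monoid ring $DG_\chi$). The whole argument is a translation between this cohomological vanishing for $G$ and the corresponding one for the vertex and edge stabilizers.

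Since the quotient $T/G$ is finite, the augmented cellular chain complex of the tree $T$ provides the classical Bass--Serre short exact sequence of $\mathbb{Z}G$-modules
\[ 0 \to \bigoplus_{e} \mathbb{Z}[G/G_e] \to \bigoplus_{v} \mathbb{Z}[G/G_v] \to \mathbb{Z} \to 0, \]
where $v$ and $e$ range over a finite set of orbit representatives. Applying $\mathrm{Tor}_*^{DG}(\widehat{DG}_{-\chi}, M \otimes_{\mathbb{Z}} -)$ and combining it with a Shapiro-type identification, one identifies the vertex and edge contributions with $\mathrm{Tor}_*^{DG_v}(\widehat{DG_v}_{-\chi_v}, M)$ and $\mathrm{Tor}_*^{DG_e}(\widehat{DG_e}_{-\chi_e}, M)$ respectively. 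This produces a long exact sequence of the form
\[ \cdots \to \bigoplus_e T^{e}_{i-1} \to \bigoplus_v T^{v}_{i} \to T^{G}_{i} \to \bigoplus_e T^{e}_{i-2} \to \cdots \]
where $T^{\sigma}_i$ denotes the corresponding $\mathrm{Tor}_i$ group. Parts (i), (ii), (iii) are then read off by standard diagram chases: (i) vanishing of all $T^{v}_i$ for $i \leq n$ together with all $T^{e}_i$ for $i \leq n-1$ forces $T^{G}_i = 0$ for $i \leq n$; (ii) vanishing of $T^{G}_i$ and $T^{e}_i$ for $i \leq n$ forces vanishing of $T^{v}_i$ for $i \leq n$; and (iii) vanishing of $T^{G}_i$ for $i \leq n$ and $T^{v}_i$ for $i \leq n-1$ forces vanishing of $T^{e}_i$ for $i \leq n-1$.

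The real technical core, and what I expect to be the hardest step, is the base-change compatibility between the Novikov completion and restriction to a stabilizer. Specifically, one has to verify that for every stabilizer $G_\sigma$ on which $\chi$ is non-zero, the natural map of monoid rings $DG_{\sigma,\chi_\sigma} \hookrightarrow DG_\chi$ (equivalently, the corresponding map of Novikov rings) is sufficiently flat, so that the long exact sequence obtained from the Bass--Serre resolution remains exact after completion, and so that Shapiro's lemma descends. This is exactly where the assumption $\chi_\sigma \ne 0$ for every vertex and edge stabilizer is essential: without it the Novikov completion at $\sigma$ would degenerate and the identification of the $\sigma$-term in the long exact sequence would fail. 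Once this compatibility is in hand, the three implications are immediate diagram chases on the long exact sequence.
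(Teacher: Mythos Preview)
The paper does not contain a proof of this theorem; it is quoted from Schmitt's Diplomarbeit \cite{Sch}, so there is no in-paper argument to compare against. Schmitt's proof (and the standard one in this circle of ideas) works directly with the Bieri--Renz criterion, constructing or analysing a partial free resolution of $M$ over the monoid ring $DG_\chi$ out of resolutions over $D(G_v)_{\chi_v}$ and $D(G_e)_{\chi_e}$, in the spirit of Chiswell's Mayer--Vietoris for graphs of groups. Your Novikov-ring reformulation is a different, more homological route.

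There is, however, a genuine gap in your plan beyond the flatness/Shapiro compatibility you already flag. The equivalence ``$[\chi]\in\Sigma^n(G,M)$ if and only if $\mathrm{Tor}_i^{DG}(\widehat{DG}_{-\chi},M)=0$ for $0\le i\le n$'' is \emph{not} unconditional: it is only valid when $M$ is of type $FP_n$ over $DG$. For a counterexample take $G=\mathbb{Z}$, $\chi$ the identity, and $M=\bigoplus_{\mathbb{N}}D$ with trivial $G$-action; all Novikov $\mathrm{Tor}$ groups vanish, yet $M$ is not even finitely generated over $DG_\chi$. In part (i) this is harmless, since the vertex and edge hypotheses let you build the required finite-type resolution of $M$ over $DG$ via a Chiswell-type gluing. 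But in parts (ii) and (iii) you must run the Novikov criterion \emph{backwards} over $G_v$ (resp.\ $G_e$), and for that you first need $M$ to be of type $FP_n$ over $DG_v$ (resp.\ $FP_{n-1}$ over $DG_e$). This does not follow from $M$ being $FP_n$ over $DG$: a finitely generated free $DG$-module is free of infinite rank over $DG_v$. Your proposal supplies no argument for these auxiliary finiteness conditions, and without them the diagram chase does not yield the stated conclusions. A separate, direct finiteness argument over the stabilizers is needed---and once one has it, the detour through Novikov rings is largely unnecessary. (A minor further point: the long exact sequence you display has the connecting map dropping degree by $2$; the correct Mayer--Vietoris sequence is $\cdots\to\bigoplus_e T^e_i\to\bigoplus_v T^v_i\to T^G_i\to\bigoplus_e T^e_{i-1}\to\cdots$, and with this indexing your three diagram chases are indeed formally correct.)
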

	 
\begin{theorem} \cite[Theorem B]{Me} \label{Bass-Serre-2} Let a group $G$ act on a $1$-connected $G$-finite $2$-complex $X$. If $\chi \colon G \mapsto \mathbb{R}$ is a homomorphism such that $\chi_{\sigma} \not= 0$ and $[\chi_{\sigma}] \in \Sigma^{ 2 - dim(\sigma)} (G_{\sigma})$ for all cells $\sigma$ of $X$, then $[\chi] \in \Sigma^2(G)$. 
\end{theorem}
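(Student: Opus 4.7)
The plan is to generalize Theorem \ref{Bass-Serre-1}(i) by one dimension, working with the topological characterization of $\Sigma^2(G)$: namely, $[\chi] \in \Sigma^2(G)$ if and only if there exists a $G$-finite $1$-connected $2$-complex $Y$ carrying a $G$-invariant height function extending $\chi$ whose superlevel subcomplex $Y_+$ is also $1$-connected. The strategy is to build such a $Y$ by stacking equivariant model complexes of the cell stabilizers $G_\sigma$ over the given complex $X$, and then to verify $1$-connectivity of both $Y$ and $Y_+$ by a van Kampen type argument run cell-by-cell along $X$.

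Since $X/G$ is finite, choose one representative $\sigma$ per $G$-orbit of cells. The hypothesis $[\chi_\sigma] \in \Sigma^{2-\dim(\sigma)}(G_\sigma)$ supplies local models: for each vertex $v$, a $G_v$-finite $1$-connected $2$-complex $Y_v$ with $1$-connected $\chi_v$-superlevel; for each edge $e$, a $G_e$-finite connected graph $Y_e$ with connected $\chi_e$-superlevel; for each $2$-cell $f$, a $G_f$-finite nonempty discrete set $Y_f$ (since $\Sigma^0$ is automatic for finitely generated groups). Assemble $Y$ by placing a copy of $Y_v$ over each vertex $v$ of $X$, gluing strips $Y_e \times [0,1]$ between $Y_v$ and $Y_w$ over each edge $e$ from $v$ to $w$ (via the inclusions $G_e \hookrightarrow G_v$ and $G_e \hookrightarrow G_w$), and filling in $Y_f$-indexed $2$-cells over each face $f$ whose boundary tracks the concatenation of the strips attached to the edges in $\partial f$. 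The resulting $Y$ carries a natural $G$-action extending the local stabilizer actions, is $G$-finite because $X/G$ is finite and each local complex is $G_\sigma$-finite, and admits an $\mathbb{R}$-valued height function $\widetilde{\chi}$ extending $\chi$.

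That $Y$ is $1$-connected follows by a standard van Kampen computation using $1$-connectivity of $X$, $1$-connectivity of each $Y_v$, and connectivity of each $Y_e$. The heart of the argument is the parallel assertion for $Y_+$: every loop in $Y_+$ bounds a disk in $Y_+$. The assumption $\chi_\sigma \neq 0$ on every cell stabilizer is essential here, as it allows one to translate cells by elements of $G_\sigma$ arbitrarily far into the positive $\chi_\sigma$-cone and thereby absorb any negative-height contributions that arise when reducing a loop. Concretely, one pushes the intersection of a given loop with each $Y_\sigma$ into its positive part $(Y_\sigma)_+$ using the superlevel connectivity hypotheses, and then patches the pieces together using the fact that the $\chi$-superlevel subcomplex of $X$ is itself used as the combinatorial skeleton along which the assembly is controlled. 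The main obstacle is precisely this filling step: one must carry out a simultaneous relative van Kampen procedure that respects both the stratification of $Y$ over $X$ and the height $\widetilde{\chi}$, in such a way that no disk reaches below zero. Once established, $Y$ together with $\widetilde{\chi}$ witness $[\chi] \in \Sigma^2(G)$.
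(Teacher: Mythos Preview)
The paper does not prove this theorem at all: it is quoted verbatim from Meinert \cite[Theorem~B]{Me} and used as a black box in the applications of Section~\ref{sect-f}. There is therefore no ``paper's own proof'' to compare your proposal against.

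For what it is worth, your outline is a reasonable sketch of the strategy one would expect for such a statement (a complex-of-spaces construction over $X$ with fibers the local $\Sigma$-models $Y_\sigma$, followed by a filtered van~Kampen argument), and is in the spirit of Meinert's original argument. But the passage you flag as ``the main obstacle'' --- the simultaneous height-respecting filling --- is where all of the actual work lies, and your description of it (``push into the positive part and patch'') is not yet a proof. In particular, the claim that the $\chi$-superlevel subcomplex of $X$ serves as the combinatorial skeleton controlling the assembly is not quite right: $X$ itself carries no height function (only the stabilizers do, via $\chi_\sigma$), and the filling has to be organized entirely in $Y$ via the essential homotopies furnished by the local $\Sigma$-hypotheses. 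If you want to turn this into a genuine proof you should consult \cite{Me} directly.
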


Recall that a graph of groups $\Gamma$ is said to be \emph{reduced} if, given an edge $e$ with distinct endpoints $v_1$ and $v_2$, the inclusions $G_e \hookrightarrow G_{v_{i}}$ are proper. In particular, an HNN extension is always reduced. An amalgam $G_1 \ast_{A} G_2$ is reduced if and only if it is non-trivial ($A \neq G_1, G_2$). If $\Gamma$ is not reduced, it can be made reduced by iteratively collapsing edges that make it non-reduced.

We say that a graph of groups is \emph{not an ascending HNN extension} if it is not an HNN extension (it has more than one edge or more than one vertex), or it is an HNN extension $G= \langle G_1, t \mid t^{-1} a t= \sigma(a) \text{ for all } a\in A \rangle$ but both $A$ and $\sigma(A)$ are proper subgroups of $G_1$.

\begin{theorem} \cite[Proposition 2.5]{Cashen} \label{Bass-Serre-3}
Let $G$ be the fundamental group of a finite reduced graph of groups $\Gamma$, with $G$ finitely generated. Assume that $\Gamma$ is not an ascending HNN extension. If $[\chi]\in \Sigma^1(G)$, then $\chi$ is non-trivial on every edge group.
\end{theorem}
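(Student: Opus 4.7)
I argue by contrapositive: assume $\chi(G_{e_0})=0$ for some edge $e_0$ of $\Gamma$, and show $[\chi]\notin\Sigma^1(G)$. The strategy is to exhibit a non-trivial splitting of $G$ over $G_{e_0}$ whose edge group lies in $\ker\chi$, and then apply the classical Bieri--Neumann--Strebel characterisation of $\Sigma^1$.

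First, I split $G$ along $e_0$. If $e_0$ is separating, let $\Gamma_1,\Gamma_2$ be the two components of $\Gamma\setminus e_0$ (with $v_i\in\Gamma_i$ for the endpoints $v_1,v_2$ of $e_0$); then $G\cong \pi_1(\Gamma_1)*_{G_{e_0}}\pi_1(\Gamma_2)$. If $e_0$ is non-separating, set $H:=\pi_1(\Gamma\setminus e_0)$; then $G\cong H*_{G_{e_0}}$ is an HNN extension whose two structural maps $G_{e_0}\hookrightarrow H$ factor through the endpoint inclusions $G_{e_0}\hookrightarrow G_{v_i}\hookrightarrow H$.

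Second, I verify that the splitting is non-trivial in the BNS sense: the amalgamated subgroup is proper in each amalgam factor, and both HNN inclusions are proper. Reducedness of $\Gamma$ yields $G_{e_0}\subsetneq G_{v_i}$ whenever $v_1\neq v_2$, and $G_{v_i}$ is properly contained in $\pi_1(\Gamma_i)$ (respectively in $H$) as soon as $\Gamma\setminus e_0$ carries any additional edges or vertices. The only exceptional case is when $\Gamma$ consists of a single vertex with the loop $e_0$; here the standing hypothesis that $\Gamma$ is not an ascending HNN extension is precisely the statement that $G_{e_0}$ and $\sigma(G_{e_0})$ are both proper subgroups of the unique vertex group.

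Third, I invoke the Bieri--Neumann--Strebel theorem: if a finitely generated group $G$ admits a non-trivial amalgam $A*_CB$ with $C\subsetneq A$ and $C\subsetneq B$, or a non-ascending HNN extension $H*_C$, over a subgroup $C\subseteq\ker\chi$, then $[\chi]\notin\Sigma^1(G)$, which contradicts our assumption. Equivalently, the Bass--Serre tree of the splitting carries a $G$-action with no global fixed point and no $G$-invariant end, with all edge stabilisers lying in $\ker\chi$, which is Brown's tree-theoretic obstruction to membership in $\Sigma^1$. The step I expect to be most delicate is the HNN subcase when $\chi$ happens to vanish on the stable letter $t$ as well: the textbook BNS statement is usually quoted with the extra requirement $\chi(t)>0$, so one either switches to Brown's formulation above (the branching of the Bass--Serre tree, forced by $G_{e_0},\sigma(G_{e_0})\subsetneq G_v$, prevents a $G$-invariant line or end), or produces a hyperbolic element of positive $\chi$-value by hand--namely $th$ for some $h\in G_v\setminus G_{e_0}$ with $\chi(h)>0$, which exists because $\chi\neq 0$ on $G$ and $\chi(G_{e_0})=0$ together force $\chi|_{G_v}\neq 0$. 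With this element in place the BNS criterion applies and the contradiction follows.
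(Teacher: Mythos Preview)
The paper does not give its own proof of this statement: it is quoted verbatim from Cashen--Levitt \cite[Proposition~2.5]{Cashen} and used as a black box. So there is no in-paper argument to compare against.

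That said, your outline is the standard route (and essentially what Cashen--Levitt do): collapse $\Gamma$ to a one-edge splitting over $G_{e_0}$, check that the resulting amalgam or HNN extension is non-ascending, and invoke the BNS/Brown tree criterion to conclude $[\chi]\notin\Sigma^1(G)$ when $\chi(G_{e_0})=0$. Two small comments on the write-up. First, in the separating case you do not need $G_{v_i}\subsetneq\pi_1(\Gamma_i)$; you only need $G_{e_0}\subsetneq\pi_1(\Gamma_i)$, and this already follows from reducedness via $G_{e_0}\subsetneq G_{v_i}\subseteq\pi_1(\Gamma_i)$ (a separating edge necessarily has distinct endpoints). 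Second, in the non-separating loop case with additional edges present, reducedness gives you nothing about $G_{e_0}\subsetneq G_v$; the reason the collapsed HNN is still non-ascending is that $G_v\subsetneq H=\pi_1(\Gamma\setminus e_0)$, hence $G_{e_0}\subseteq G_v\subsetneq H$ regardless. Your handling of the delicate subcase $\chi(t)=0$ is fine: under that assumption $\chi|_{G_v}\neq 0$ is forced (else $\chi\equiv 0$), and any $h\in G_v$ with $\chi(h)>0$ automatically lies outside $G_{e_0}$ since $\chi(G_{e_0})=0$; then $th$ has nonzero $t$-exponent sum, hence is hyperbolic, and the BNS criterion applies. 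Alternatively, as you note, Brown's end-criterion sidesteps this entirely.
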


Let $\chi = (\chi_1, \chi_2) \colon G_1 \times G_2 \mapsto \mathbb{R}$ be a non-zero real character, where by $\chi_i$ we denote the restriction of $\chi$ to $G_i$, $i\in \{1,2\}$. Let $\pi_i$ be the canonical projection $ G_1 \times G_2 \mapsto G_i$. Then, $\pi_i$ induces \[\pi_i^{\ast} \colon S(G_i) \mapsto S( G_1 \times G_2).\]
 
 \begin{lemma} \cite[Lemma 9.1]{Ge} Let $\chi_1 \colon G_1 \mapsto \mathbb{R}$ and $\chi_2 \colon G_2 \mapsto \mathbb{R}$ be two non-zero real characters and define $\chi$ to be $( \chi_1, \chi_2) \colon G \coloneqq G_1 \times G_2 \mapsto \mathbb{R}$.\\[5pt]
a) Suppose that $G_1$ and $G_2$ are groups of type $F_{n+ m + 1}$. If $[\chi_1] \in \Sigma^n(G_1)$ and $[\chi_2] \in \Sigma^m( G_2)$, then $[\chi] \in \Sigma^{ n+ m + 1}(G)$.\\[5pt]
b) Suppose that $G_1$ and $G_2$ are groups of type $FP_{n+ m + 1}$. If  $[\chi_1] \in \Sigma^n(G_1, \mathbb{Z})$ and $[\chi_2] \in \Sigma^m( G_2, \mathbb{Z})$, then $[\chi] \in \Sigma^{ n+ m + 1}(G, \mathbb{Z})$.
\end{lemma}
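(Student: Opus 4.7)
The plan is to prove the homological assertion (b) via a tensor product / spectral sequence argument and then run the parallel topological argument for (a) at the level of universal covers. The key input from the hypothesis is that $[\chi_i] \in \Sigma^n(G_i, \mathbb{Z})$ yields a free resolution $P^{(i)}_{\bullet} \twoheadrightarrow \mathbb{Z}$ over $\mathbb{Z} (G_i)_{\chi_i}$ with $P^{(i)}_j$ finitely generated for $j \leq n$, and analogously with $m$ for $G_2$. From these I would construct a free resolution of $\mathbb{Z}$ over $\mathbb{Z} G_{\chi}$ that is finitely generated in all degrees $\leq n+m+1$.

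The key geometric decomposition is
\[
G_{\chi} = \bigcup_{s \in \mathbb{R}} \{ g_1 \in G_1 \mid \chi_1(g_1) \geq s \} \times \{ g_2 \in G_2 \mid \chi_2(g_2) \geq -s \},
\]
which presents $G_{\chi}$ as a directed union of translates of the product submonoid $(G_1)_{\chi_1} \times (G_2)_{\chi_2}$. On each piece, the K\"unneth formula applied to $P^{(1)}_{\bullet} \otimes_{\mathbb{Z}} P^{(2)}_{\bullet}$ produces a free resolution of $\mathbb{Z}$ over $\mathbb{Z} [(G_1)_{\chi_1} \times (G_2)_{\chi_2}]$, finitely generated in degrees $\leq n+m$. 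Inducing to $\mathbb{Z} G_{\chi}$ on each piece separately only gives partial information; to assemble a single $\mathbb{Z} G_{\chi}$-resolution, one must glue these pieces along the filtration parameter $s$.

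The passage from $n+m$ to $n+m+1$ is the subtle step. My plan is to build a double complex whose rows are the K\"unneth complexes over successive translates of the product half-ring and whose columns are indexed by a cofinal countable subset of the filtration (immediate for discrete characters; in the general case a density argument is needed). The column differentials encode Mayer--Vietoris boundary maps for overlapping translates, and the associated spectral sequence converges to a free $\mathbb{Z} G_{\chi}$-resolution of $\mathbb{Z}$. The extra degree gained is morally the suspension/join shift, paralleling the classical connectivity formula $S^p \ast S^q \simeq S^{p+q+1}$ for joins of spheres.

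For part (a), the argument runs in parallel at the level of universal covers: taking $K(G_i, 1)$-complexes $X_i$ with finite $(n+m+1)$-skeleton, the product $\widetilde X_1 \times \widetilde X_2$ is a universal cover of the $K(G, 1)$-complex $X_1 \times X_2$, and the preimage of $[0, \infty)$ under the combined height function $\widetilde\chi_1 + \widetilde\chi_2$ decomposes just as in the second paragraph. A join-type connectivity argument on this decomposition directly yields the required $(n+m)$-connectivity, giving $[\chi] \in \Sigma^{n+m+1}(G)$. The main obstacle is making the spectral sequence (or Mayer--Vietoris telescope) rigorous while tracking finite generation through the assembly step, and handling the case where the characters $\chi_i$ have dense image, for which a naive indexing of the filtration by integers is not cofinal and one must argue by approximation.
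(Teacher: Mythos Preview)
The paper does not prove this lemma: it is quoted verbatim as \cite[Lemma 9.1]{Ge} and used as a black box, so there is no proof in the paper to compare your proposal against.

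That said, a brief comment on your outline. The geometric decomposition of $G_\chi$ as a union of translates of $(G_1)_{\chi_1}\times(G_2)_{\chi_2}$ is correct and is indeed the heart of the matter, and the ``$+1$'' shift coming from a join/Mayer--Vietoris mechanism is the right intuition. However, your claim that the K\"unneth complex $P^{(1)}_\bullet\otimes P^{(2)}_\bullet$ is finitely generated in degrees $\leq n+m$ over $\mathbb{Z}[(G_1)_{\chi_1}\times(G_2)_{\chi_2}]$ is not immediate: in degree $k$ one has $\bigoplus_{i+j=k}P^{(1)}_i\otimes P^{(2)}_j$, and for $k=n+m$ this requires all $P^{(1)}_i$ with $i\leq n+m$ (not just $i\leq n$) to be finitely generated. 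One must first truncate or replace the resolutions so that the tails are controlled, which is possible since the $G_i$ are assumed of type $FP_{n+m+1}$, but this step needs to be made explicit. Beyond that, your description of the assembly step (the double complex indexed by a cofinal filtration, with Mayer--Vietoris column differentials) is honest about being a sketch; Gehrke's original argument makes this precise, and you would need to consult it (or an equivalent source) to fill in the ``gaining one degree'' step rigorously, particularly for non-discrete characters.
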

 
 \begin{lemma} \cite[Lemma 9.2]{Ge} \label{zero-direct-product} Let $\chi_i \colon G_i \mapsto \mathbb{R}$ be a non-zero real character and let $G$ be $G_1 \times G_2$.\\[5pt]
a) Suppose that $G$ is of type $F_n$. Then $[\chi_i] \in \Sigma^n (G_i)$ if and only if $\pi_i^{\ast}([\chi_i]) \in \Sigma^n ( G_1 \times G_2)$.\\[5pt]
b) Suppose that $G$ is of type $FP_n$. Then $[\chi_i] \in \Sigma^n (G_i, \mathbb{Z})$ if and only if $\pi_i^{\ast}([\chi_i]) \in \Sigma^n ( G_1 \times G_2, \mathbb{Z})$.
\end{lemma}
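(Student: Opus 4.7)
By symmetry take $i = 1$; then $\chi \coloneqq \pi_1^{\ast}(\chi_1)$ vanishes on $G_2$, so $G_\chi = (G_1)_{\chi_1} \times G_2$ as monoids and, as rings, $\mathbb{Z} G_\chi \iso \mathbb{Z} (G_1)_{\chi_1} \otimes_{\mathbb{Z}} \mathbb{Z} G_2$. Each $G_j$ is a retract of $G = G_1 \times G_2$, so both factors inherit type $FP_n$ (resp.\ $F_n$) from $G$.

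For the forward implication of (b), I would choose a free resolution $P_\bullet \to \mathbb{Z}$ over $\mathbb{Z}(G_1)_{\chi_1}$ with $P_p$ finitely generated for $p \leq n$ (afforded by $[\chi_1]\in \Sigma^n(G_1,\mathbb{Z})$) and a free resolution $Q_\bullet \to \mathbb{Z}$ over $\mathbb{Z} G_2$ with $Q_q$ finitely generated for $q \leq n$ (afforded by $G_2$ being of type $FP_n$). The total complex of $P_\bullet \otimes_{\mathbb{Z}} Q_\bullet$ is then a free $\mathbb{Z} G_\chi$-resolution of $\mathbb{Z}$ whose $k$-th term $\bigoplus_{p+q=k} P_p \otimes Q_q$ is finitely generated for $k \leq n$, giving $[\chi] \in \Sigma^n(G,\mathbb{Z})$.

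For the converse of (b), the tensor factorisation delivers, for every $\mathbb{Z}(G_1)_{\chi_1}$-module $N$ viewed as a $\mathbb{Z} G_\chi$-module via the augmentation of $\mathbb{Z} G_2$ (so $G_2$ acts trivially), a K\"unneth short exact sequence natural in $N$,
\[0 \to \bigoplus_{p+q=k} \mathrm{Tor}^{\mathbb{Z} (G_1)_{\chi_1}}_p(\mathbb{Z}, N) \otimes_{\mathbb{Z}} H_q(G_2, \mathbb{Z}) \to \mathrm{Tor}^{\mathbb{Z} G_\chi}_k(\mathbb{Z}, N) \to \bigoplus_{p+q=k-1} \mathrm{Tor}^{\mathbb{Z}}_1\!\bigl(\mathrm{Tor}^{\mathbb{Z} (G_1)_{\chi_1}}_p(\mathbb{Z}, N),\, H_q(G_2, \mathbb{Z})\bigr) \to 0.\]
By the Bieri--Eckmann criterion, the assumption $[\chi] \in \Sigma^n(G,\mathbb{Z})$ is equivalent to $\mathrm{Tor}^{\mathbb{Z} G_\chi}_k(\mathbb{Z}, -)$ commuting with direct limits for $k \leq n$. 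An induction on $k \leq n$, combining exactness of $\varinjlim$ with the displayed sequence and the inductive hypothesis that $\mathrm{Tor}^{\mathbb{Z} (G_1)_{\chi_1}}_p(\mathbb{Z}, -)$ commutes with $\varinjlim$ for $p < k$, forces the $(p,q)=(k,0)$ summand $\mathrm{Tor}^{\mathbb{Z} (G_1)_{\chi_1}}_k(\mathbb{Z}, -) \otimes H_0(G_2, \mathbb{Z}) = \mathrm{Tor}^{\mathbb{Z} (G_1)_{\chi_1}}_k(\mathbb{Z}, -)$ to do the same, yielding $[\chi_1] \in \Sigma^n(G_1, \mathbb{Z})$.

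Part (a) reduces to (b): the case $n = 1$ is $\Sigma^1(G) = \Sigma^1(G, \mathbb{Z})$, while for $n \geq 2$ the identity $\Sigma^n(G) = \Sigma^2(G) \cap \Sigma^n(G, \mathbb{Z})$ from \cite{Renzthesis} shrinks the problem to $n = 2$. For $n = 2$ one runs the analogous argument with finite presentations of $G_1, G_2$ combined into the standard finite presentation $\langle X_1 \sqcup X_2 \mid R_1 \sqcup R_2 \sqcup [X_1, X_2] \rangle$ of $G$, whose Cayley $2$-complex is the product of the factor Cayley $2$-complexes. The main obstacle is the homotopical converse: here the algebraic K\"unneth splitting must be replaced by a CW-theoretic argument ensuring that $1$-connectedness of the $\chi$-sublevel set in the product Cayley complex forces $1$-connectedness of the $\chi_1$-sublevel set in that of $G_1$, which does not follow formally from (b) and needs a direct geometric splitting across the factor structure.
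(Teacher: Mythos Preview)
The paper does not prove this lemma at all: it is quoted verbatim from Gehrke \cite[Lemma~9.2]{Ge} and used as a black box. So there is no ``paper's own proof'' to compare against; what follows is an assessment of your argument on its own merits.

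Your treatment of part (b) is correct and essentially self-contained. The forward direction via the tensor product of resolutions is standard, and the converse via the K\"unneth sequence combined with the Bieri--Eckmann direct-limit criterion is a clean way to isolate the $(k,0)$ summand; the induction works because tensoring with $H_q(G_2,\mathbb{Z})$ and $\mathrm{Tor}^{\mathbb{Z}}_1(-,H_q(G_2,\mathbb{Z}))$ always commute with filtered colimits, regardless of whether $H_q(G_2,\mathbb{Z})$ is finitely generated.

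For part (a) you correctly reduce to $n=2$ and then honestly flag the homotopical converse as the remaining obstacle. That gap is real in your write-up, but it is exactly what is handled by Meinert's result \cite[Corollary~4.2]{Me}, which the present paper already invokes in the proof of Lemma~\ref{ses}: for a short exact sequence $1 \to A \to G \to Q \to 1$ with $A$ finitely presented and $\chi(A)=0$, one has $[\chi]\in\Sigma^2(G)$ if and only if $[\bar\chi]\in\Sigma^2(Q)$. Applying this with $A=G_2$ (finitely presented because it is a retract of the $F_2$ group $G$) and $Q=G_1$ closes your gap without any ad hoc geometric splitting of the product Cayley complex. Note that Lemma~\ref{ses} as stated in the paper assumes $A$ of type $F_\infty$, which is stronger than you have; you should cite \cite[Corollary~4.2]{Me} directly for the $n=2$ case.
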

	 	 
The above two lemmas were used in \cite{Ge} to prove the following result.
	 
\begin{theorem} \cite[Corollary 2]{Ge}\label{direct-product}  Let $G_1, \dots, G_m$ be groups of type $F_{\infty}$ and suppose that $\Sigma^1(G_i) = \Sigma^{ \infty}(G_i)$ for each $i\in \{1,\dots,m\}$. Let $\chi = ( \chi_1, \dots, \chi_m) \colon G \coloneqq G_1 \times \cdots \times G_m \mapsto \mathbb{R}$ be a non-zero real character with $\chi_{i_1}, \dots, \chi_{i_k}$  the only non-zero characters  among $\chi_1, \dots, \chi_m$. Then,
	 \[ [\chi] \notin \Sigma^n(G) \iff \quad n \geq k \quad \text{  and } \quad [\chi_{i_r} ] \notin \Sigma^{1}(G_{i_r}) \hbox{ for } 1 \leq r \leq k.\]
\end{theorem}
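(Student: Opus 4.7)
The plan is to prove the equivalent reformulation
\[
[\chi] \in \Sigma^n(G) \iff n < k \text{ or } [\chi_{i_r}] \in \Sigma^1(G_{i_r}) \text{ for some } r,
\]
first reducing to the case where every projection of $\chi$ is non-zero (so $k=m$), and then handling the two implications using Lemmas~9.1 and~\ref{zero-direct-product} recalled above.

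\emph{Reduction to $k=m$.} Whenever some $\chi_j$ vanishes, write $G = G_j \times G^\natural$ with $G^\natural$ the product of the remaining factors; then $\chi$ is the pull-back of its restriction $\chi|_{G^\natural}$ along the projection, and Lemma~\ref{zero-direct-product} gives $[\chi] \in \Sigma^n(G)$ if and only if $[\chi|_{G^\natural}] \in \Sigma^n(G^\natural)$. The hypotheses of the theorem (type $F_\infty$ and $\Sigma^1 = \Sigma^\infty$ on every factor) pass to $G^\natural$, so iterating this move lets me assume $k=m$.

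\emph{``If'' direction.} If $n < k$, each non-zero $\chi_r$ satisfies $[\chi_r] \in \Sigma^0(G_r) = S(G_r)$ automatically; iterating Lemma~9.1 of \cite{Ge} $(m-1)$ times yields $[\chi] \in \Sigma^{k-1}(G) \subseteq \Sigma^n(G)$. If instead some $[\chi_r] \in \Sigma^1(G_r) = \Sigma^\infty(G_r)$, then $[\chi_r] \in \Sigma^N(G_r)$ for every $N$, and the same iteration with $[\chi_s] \in \Sigma^0(G_s)$ for $s \neq r$ places $[\chi]$ into $\Sigma^{N+k-1}(G)$ for arbitrarily large $N$, hence into $\Sigma^n(G)$.

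\emph{``Only if'' direction.} This is the main obstacle. By contrapositive, assuming every $[\chi_r] \notin \Sigma^1(G_r) = \Sigma^\infty(G_r)$ and $n \geq k = m$, I must show $[\chi] \notin \Sigma^n(G)$. I would induct on $m$: the base case $m=1$ is immediate from $\Sigma^1(G_1) = \Sigma^\infty(G_1)$. For the step, write $G = G' \times G_m$ with $G' = G_1 \times \cdots \times G_{m-1}$ and $\chi = (\chi', \chi_m)$; the crux is a sharp converse to Lemma~9.1, expressing $\Sigma^n(G' \times G_m)$ off the coordinate subspheres as a union of joins $\Sigma^p(G') \ast \Sigma^q(G_m)$ for suitable indices with $p + q + 1 = n$. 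One containment is supplied by Lemma~9.1; the other is the substantive input from \cite{Ge}, typically obtained by analyzing the $\mathbb{Z} G_\chi$-chain complex on a tensor-product resolution for $G' \times G_m$ and arguing that failure of any factor to satisfy $\Sigma^1 = \Sigma^\infty$ cannot be compensated by the other. Granted this identification, combining the inductive hypothesis for $G'$ with the base case for $G_m$ forces the required non-membership and closes the induction.
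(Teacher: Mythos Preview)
The paper does not prove this theorem; it is quoted as \cite[Corollary~2]{Ge}, with only the remark that ``the above two lemmas were used in \cite{Ge} to prove the following result.'' So there is no in-paper proof to compare against, only the indication that Lemmas~9.1 and~\ref{zero-direct-product} are the relevant tools.

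Your reduction to $k=m$ via Lemma~\ref{zero-direct-product} and your ``if'' direction via iterated use of Lemma~9.1 are correct, and this is exactly the toolkit the paper signals. The gap is in your ``only if'' direction. You correctly identify that what is needed is the reverse containment to Lemma~9.1---a formula of the shape
\[
\Sigma^n(G'\times G_m)\setminus\bigl(\pi^{\prime\ast}S(G')\cup\pi_m^{\ast}S(G_m)\bigr)\ \subseteq\ \bigcup_{p+q=n-1}\Sigma^p(G')\ast\Sigma^q(G_m)
\]
---but you do not prove it; you write ``Granted this identification'' and proceed. That containment is precisely the nontrivial content of Gehrke's paper (it is his direct-product formula, proved by a careful analysis of valuations on tensor-product resolutions), and it does not follow from the two lemmas you have at hand. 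Without it your induction does not close: knowing $[\chi']\notin\Sigma^{m-2}(G')$ and $[\chi_m]\notin\Sigma^1(G_m)$ gives you nothing about $[\chi]$ unless you already have the reverse inclusion. So as written your argument for the hard direction is a restatement of what has to be shown, not a proof of it.
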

	 
\subsection{New applications} 

We now apply the previous results to solve the question posed in \cite{Jone-Montse} and explained at the end of the introduction.

\begin{definition}\label{Definition hierarchy}
Let $\mathcal{K}$ be a class of groups. The \emph{$Z \ast$-closure of $\mathcal{K}$}, denoted by $Z \ast (\mathcal{K})$, is the union of classes $(Z \ast (\mathcal{K}))_{j}$ defined as follows. At level $0$, the class $(Z \ast (\mathcal{K}))_{0}$ equals $\mathcal{K}$. A group $K$ lies in $(Z \ast (\mathcal{K}))_{j}$ if and only if
\[ K \simeq \mathbb{Z}^m \times (G_{1}\ast \cdots \ast G_{n}),\] where $m\in \mathbb{N}\cup \{0\}$ and the group $G_{i}$ lies in $(Z \ast (\mathcal{K}))_{j-1}$ for all $i\in \{1,\dots,n\}$.

The \emph{level of $G$}, denoted by $l(G)$, is the smallest $j$ for which $G$ belongs to  $(Z \ast (\mathcal{K}))_{j}$.
\end{definition}



Let $\mathcal{C}$ be the class of fundamental groups of finite reduced graphs of groups where the vertex groups are free abelian of rank greater than $1$ and the edge groups are infinite cyclic or trivial. Let $\mathcal {J}$ be the \emph{$Z \ast$-closure of $\mathcal{C}$}.
	 
\begin{proposition}
Let $G \in \mathcal{J}$ and $\chi \colon G \mapsto \mathbb{R}$ be a non-zero real character. Then, $ \Sigma^1(G) = \Sigma^{\infty}(G).$
\end{proposition}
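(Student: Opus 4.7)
The plan is to induct on the level $l(G)$ in the $Z\ast$-closure $\mathcal{J}$. Since $\Sigma^\infty(G)\subseteq\Sigma^1(G)$ always, it suffices to establish the reverse inclusion: if $[\chi]\in\Sigma^1(G)$, then $[\chi]\in\Sigma^n(G)$ for every $n\geq 2$.

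For the base case $G\in\mathcal{C}$, write $G=\pi_1(\Gamma)$ where $\Gamma$ is a finite reduced graph of groups with vertex groups $\mathbb{Z}^{n_v}$, $n_v>1$, and edge groups either $\mathbb{Z}$ or trivial. The crucial structural observation is that since every edge group has rank at most $1$ while every vertex group has rank at least $2$, no edge group can coincide with an incident vertex group; in particular, $\Gamma$ is never an ascending HNN extension, so Theorem \ref{Bass-Serre-3} always applies. I split into three sub-cases. First, if $\Gamma$ has no edges, then $G$ is free abelian of rank greater than $1$; any non-zero character is non-trivial on the centre, so Lemma \ref{central} yields $[\chi]\in\Sigma^n(G)$ for all $n$. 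Second, if some edge $e$ has trivial edge group, Theorem \ref{Bass-Serre-3} forces $\chi|_{G_e}\neq 0$, which is impossible; hence $\Sigma^1(G)=\emptyset=\Sigma^\infty(G)$. Third, if every edge group is infinite cyclic, Theorem \ref{Bass-Serre-3} forces $\chi|_{G_e}\neq 0$ for every edge $e$, and since $G_e$ embeds into its vertex groups, also $\chi|_{G_v}\neq 0$ at every vertex; as all cell stabilisers are free abelian, Lemma \ref{central} gives $[\chi_\sigma]\in\Sigma^n(G_\sigma)$ for every cell $\sigma$ and every $n$. Applying Theorem \ref{Bass-Serre-1}(i) with $M=\mathbb{Z}$ to the action of $G$ on its Bass--Serre tree yields $[\chi]\in\Sigma^n(G,\mathbb{Z})$ for all $n$, and viewing this tree as a $1$-connected $G$-finite $2$-complex with no $2$-cells, Theorem \ref{Bass-Serre-2} gives $[\chi]\in\Sigma^2(G)$. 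Combining the two via $\Sigma^n(G)=\Sigma^2(G)\cap\Sigma^n(G,\mathbb{Z})$ completes the base case.

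For the inductive step, suppose $l(G)=j\geq 1$, so $G\simeq\mathbb{Z}^k\times G_0$ with $G_0=G_1\ast\cdots\ast G_n$ and each $G_i$ of level at most $j-1$. If $n\geq 2$, the graph of groups presentation of $G_0$ given by a tree with vertex groups $G_i$ and trivial edge groups is reduced (each $G_i$ is non-trivial), finitely generated, and not an ascending HNN extension, so Theorem \ref{Bass-Serre-3} yields $\Sigma^1(G_0)=\emptyset=\Sigma^\infty(G_0)$. If $n=1$, the inductive hypothesis gives $\Sigma^1(G_0)=\Sigma^\infty(G_0)$ directly. A parallel induction confirms that every group in $\mathcal{J}$ is of type $F_\infty$, so Theorem \ref{direct-product} applies to the two-factor splitting $G=\mathbb{Z}^k\times G_0$: since $\Sigma^1(\mathbb{Z}^k)=S(\mathbb{Z}^k)=\Sigma^\infty(\mathbb{Z}^k)$ and $\Sigma^1(G_0)=\Sigma^\infty(G_0)$, the membership criterion in Theorem \ref{direct-product} is insensitive to $n$, yielding $\Sigma^1(G)=\Sigma^\infty(G)$.

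The main obstacle is the third sub-case of the base case, where Theorems \ref{Bass-Serre-1} and \ref{Bass-Serre-2} must be combined to control both the homological invariants $\Sigma^n(G,\mathbb{Z})$ and the homotopical invariant $\Sigma^2(G)$. The delicate point is justifying Theorem \ref{Bass-Serre-2} by treating the Bass--Serre tree as a formally $2$-dimensional $G$-complex with no $2$-cells; its $1$-connectedness is automatic, and the hypotheses $[\chi_v]\in\Sigma^2(G_v)$ and $[\chi_e]\in\Sigma^1(G_e)$ are automatic for non-zero characters on free abelian groups by Lemma \ref{central}. A secondary verification concerns the inductive step, namely that each $G_i\in\mathcal{J}$ is non-trivial, finitely generated, and of type $F_\infty$, which is routine by a parallel induction on level.
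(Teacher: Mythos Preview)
Your proof is correct and follows essentially the same strategy as the paper: induction on level, with Theorem~\ref{Bass-Serre-3} controlling the base case and the free-product structure in the inductive step, and Theorems~\ref{Bass-Serre-1} and~\ref{Bass-Serre-2} upgrading $\Sigma^1$ to $\Sigma^\infty$ via the Bass--Serre tree. Your base case is a bit more explicitly subdivided than the paper's (which treats the no-edge and trivial-edge-group situations implicitly), but the content is identical.

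The one point of genuine divergence is the inductive step. The paper argues by hand: if $\chi$ is non-trivial on the $\mathbb{Z}^m$ factor, Lemma~\ref{central} finishes immediately; if $\chi|_{\mathbb{Z}^m}=0$, it passes to $\mu$ on $G_1\ast\cdots\ast G_n$, invokes Theorem~\ref{Bass-Serre-3} to force $n=1$, and then applies induction and Lemma~\ref{zero-direct-product}. You instead establish $\Sigma^1(G_0)=\Sigma^\infty(G_0)$ once and for all and then invoke Gehrke's Theorem~\ref{direct-product} for the product $\mathbb{Z}^k\times G_0$. Both routes rest on the same underlying direct-product lemmas, so this is a packaging difference rather than a new idea; your version is arguably cleaner because it avoids the explicit case split on $\chi|_{\mathbb{Z}^k}$, at the cost of invoking a heavier black box and needing the auxiliary observation that groups in $\mathcal{J}$ are of type $F_\infty$.
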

	 
\begin{proof}Since in general $ \Sigma^{\infty}(G) \subseteq \Sigma^1(G)$,  we have to show that $\Sigma^1(G) \subseteq \Sigma^{\infty}(G).$ Let $[\chi] \in \Sigma^1(G)$.
	 
We use induction on the level of $G$. Suppose that $G\in \mathcal{J}_{0}$. That is, $G \in \mathcal{C}$. Then, by Theorem \ref{Bass-Serre-3}, the restriction of $\chi$ to any edge group is non-zero, so in particular, the restriction of $\chi$ to every vertex group is non-zero. Since for every finitely generated free abelian group $A$ we have that $S(A) = \Sigma^{ \infty}(A)$, by the definition of $G$ and Theorem \ref{Bass-Serre-1} and Theorem \ref{Bass-Serre-2} we get that $[\chi] \in \Sigma^{ \infty} (G, \mathbb{Z})$ and $[\chi] \in \Sigma^2(G)$. Finally, since $\Sigma^{ \infty}(G) = \Sigma^2(G) \cap \Sigma^{ \infty} (G, \mathbb{Z})$ we deduce that $[\chi] \in \Sigma^{\infty}(G).$
	 
Suppose now that $G$ has level $k \geq 1$ and that the result holds for groups in $\mathcal{J}_{k-1}$. Then, \[G = \mathbb{Z}^m \times (G_1 \ast \dots \ast G_n),\] where each $G_i$ has level at most $k-1$.

If $m=0$, from Theorem \ref{Bass-Serre-3} we get that $n=1$, so the result follows from the inductive hypothesis. If $m\geq 1$,  assume that there is $c\in \mathbb{Z}^m$ such that $\chi(c)\neq 0$. Then, by Lemma \ref{central}  $[\chi] \in \Sigma^{\infty} (G)$. Let us deal with the case when $\chi |_{\mathbb{Z} ^m} = 0$. We set $\mu$ to be the restriction of $\chi$ to $G_1 \ast \dots \ast  G_n$. Since $[\chi] \in \Sigma^1(G)$, then $[\mu] \in \Sigma^1(G_1 \ast \dots \ast G_n)$. Again by  Theorem \ref{Bass-Serre-3}, $n= 1$ and $[\mu] \in \Sigma^{ \infty} (G_1)$. But since  $\chi |_{\mathbb{Z} ^m} = 0$ this is equivalent to $[\chi] \in \Sigma^{\infty} (G)$.
\end{proof}

\begin{theorem}[=Proposition E]\label{fin-pres-sigma} Let $S < H_1 \times H_2$ be a co-abelian, finitely presented, subdirect product with $H_1, H_2 \in \mathcal{J}$. Then, $S$ is of type $F_{\infty}$.
\end{theorem}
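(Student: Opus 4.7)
The plan is to translate the statement into $\Sigma$-theory on $G := H_1 \times H_2$ and apply the product formula (Theorem \ref{direct-product}) combined with the fact that $\Sigma^1(H_i) = \Sigma^\infty(H_i)$ for $H_i \in \mathcal{J}$ (the immediately preceding proposition). A small preliminary observation is that every group in $\mathcal{J}$ is of type $F_\infty$: at level zero a finite graph of groups whose vertex groups are finitely generated free abelian and whose edge groups are infinite cyclic or trivial is $F_\infty$, and the $Z\ast$-closure is easily seen to preserve $F_\infty$ (direct products with $\mathbb{Z}^m$ and free products of finitely many finitely generated $F_\infty$ groups remain $F_\infty$). Hence $H_1$, $H_2$, and $G$ are of type $F_\infty$.

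Since $S$ is co-abelian in $G$, we have $G' \subseteq S$, so Theorem \ref{BRhomotopic}(a) gives that $S$ is of type $F_n$ if and only if $S(G,S) \subseteq \Sigma^n(G)$. Finite presentability of $S$ supplies for free the inclusion $S(G,S) \subseteq \Sigma^2(G)$, so the task reduces to showing $\Sigma^2(G) \subseteq \Sigma^\infty(G)$; once this is established, $S(G,S) \subseteq \Sigma^n(G)$ for all $n$, and Theorem \ref{BRhomotopic}(a) immediately gives $S$ of type $F_\infty$.

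To prove $\Sigma^2(G) \subseteq \Sigma^\infty(G)$, I will fix $[\chi] \in \Sigma^2(G)$ and write $\chi = (\chi_1, \chi_2)$ with $\chi_i$ the restriction of $\chi$ to $H_i$. Since $\Sigma^1(H_i) = \Sigma^\infty(H_i)$, the hypotheses of Theorem \ref{direct-product} are met for the direct product $H_1 \times H_2$, and a short case analysis on how many of $\chi_1, \chi_2$ are non-zero finishes the job: if exactly one $\chi_i$ is non-zero then Theorem \ref{direct-product} (with $k=1$) characterizes membership in every $\Sigma^n(G)$, $n \geq 1$, by the single $n$-independent condition $[\chi_i] \in \Sigma^1(H_i)$; if both $\chi_i$ are non-zero (so $k=2$), the same theorem characterizes membership in $\Sigma^n(G)$ for $n \geq 2$ by the $n$-independent condition that at least one $[\chi_i]$ lies in $\Sigma^1(H_i)$. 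In either case $\Sigma^2(G)$ and $\Sigma^n(G)$ agree for $n \geq 2$, hence $[\chi] \in \Sigma^\infty(G)$.

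The only step that is not pure bookkeeping is this case analysis via Theorem \ref{direct-product}; the rest is glue, relying on the co-abelian hypothesis to bring the Bieri--Renz criterion into play and on finite presentability of $S$ to start the induction at $\Sigma^2(G)$.
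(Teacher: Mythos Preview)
Your proof is correct and follows essentially the same route as the paper: reduce via Theorem~\ref{BRhomotopic} to showing that every $[\chi] \in \Sigma^2(H_1 \times H_2)$ already lies in $\Sigma^\infty(H_1 \times H_2)$, and then do a case split on how many of $\chi_1,\chi_2$ vanish, invoking $\Sigma^1(H_i)=\Sigma^\infty(H_i)$ together with the direct-product formula. The only cosmetic differences are that the paper treats the case $\chi_i=0$ via Lemma~\ref{zero-direct-product} rather than Theorem~\ref{direct-product}, and that you explicitly record (as the paper leaves implicit) that groups in $\mathcal{J}$ are of type $F_\infty$.
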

	 
\begin{proof}
Let $\chi = (\chi_1, \chi_2) \colon H  \coloneqq H_1 \times H_2 \mapsto   \mathbb{R}$ be a non-zero real character such that $\chi(S) = 0$. Since $S$ is finitely presented, by Theorem \ref{BRhomotopic}  $[\chi] \in \Sigma^2(H)$. We aim to show that $[\chi] \in \Sigma^{ \infty}(H)$ and hence, again by Theorem \ref{BRhomotopic}, $S$ is of type $F_{\infty}$.\\[5pt]	 
1) Suppose first that $\chi_1 \not= 0$ and  $\chi_2 \not= 0$. If $[\chi_1] \notin \Sigma^1( H_1) $ and $ [\chi_2] \notin \Sigma^1( H_2)$, then by Theorem \ref{direct-product} $[\chi] \notin \Sigma^2(H)$, which is a contradiction. Hence, for at least one $i$ we have that $[\chi_i] \in \Sigma^1(H_i) = \Sigma^{ \infty}(H_i)$. Then, by Theorem \ref{direct-product}, $[\chi] \in \Sigma^n(H)$ for every $n$, that is, $[\chi] \in \Sigma^{ \infty}(H)$.\\[5pt]	 
2) Suppose that $\chi_1$ or $\chi_2$ is the zero character. Without loss of generality, assume that $\chi_1 = 0$. Then, by Lemma \ref{zero-direct-product}, $[\chi] \in \Sigma^n(H) \iff [\chi_2] \in \Sigma^n(H_2)$. Since $[\chi] \in \Sigma^2(H)$ we have that $[\chi_2] \in \Sigma^2(H_2) \subseteq \Sigma^1(H_2) = \Sigma^{ \infty}(H_2)$. Hence, using again Lemma \ref{zero-direct-product}, $[\chi] \in \Sigma^{ \infty} (H)$.
\end{proof}

Finally, we can apply this theorem to the case in which we are interested. Recall that
the class $\mathcal{G}$ is defined to be the class of finitely generated cyclic subgroup separable graphs of groups with free abelian vertex groups of rank greater than $1$ and infinite cyclic or trivial edge groups. The class $\mathcal{A}$ is the $Z\ast-$closure of $\mathcal{G}$. 

\begin{corollary} \label{aplication} Let $S < H_1 \times H_2$ be a co-abelian, finitely presented, subdirect product with $H_1,H_2\in \mathcal{A}$. Then, $S$ is of type $F_{\infty}$.
\end{corollary}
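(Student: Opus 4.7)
The plan is to deduce Corollary \ref{aplication} directly from Proposition E (Theorem \ref{fin-pres-sigma}) by showing that the class $\mathcal{A}$ is contained in $\mathcal{J}$. Once the containment $\mathcal{A} \subseteq \mathcal{J}$ is established, the hypothesis $H_1, H_2 \in \mathcal{A}$ implies $H_1, H_2 \in \mathcal{J}$, and Proposition E applies verbatim to give that the co-abelian finitely presented subdirect product $S < H_1 \times H_2$ is of type $F_{\infty}$.

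The first step is to compare the generating classes $\mathcal{G}$ and $\mathcal{C}$. Recall that $\mathcal{C}$ consists of fundamental groups of finite reduced graphs of groups with free abelian vertex groups of rank greater than $1$ and infinite cyclic or trivial edge groups, whereas $\mathcal{G}$ is defined by the same structural data together with the additional requirement of cyclic subgroup separability. Thus the definition of $\mathcal{G}$ imposes strictly more conditions than that of $\mathcal{C}$, so $\mathcal{G} \subseteq \mathcal{C}$ by inspection of the definitions; in other words, $(Z\ast(\mathcal{G}))_0 \subseteq (Z\ast(\mathcal{C}))_0$.

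The second step is a short induction on the level $k$ in Definition \ref{Definition hierarchy} to promote this base-level containment to $\mathcal{A} \subseteq \mathcal{J}$. If $K \in (Z\ast(\mathcal{G}))_k$ with $k \geq 1$, then by definition $K \cong \mathbb{Z}^m \times (G_1 \ast \cdots \ast G_n)$ with each $G_i \in (Z\ast(\mathcal{G}))_{k-1}$. By the inductive hypothesis each $G_i$ lies in $(Z\ast(\mathcal{C}))_{k-1}$, and hence $K$ lies in $(Z\ast(\mathcal{C}))_k$ by the very construction of the $Z\ast$-closure. This yields $(Z\ast(\mathcal{G}))_k \subseteq (Z\ast(\mathcal{C}))_k$ for all $k$, and taking unions gives $\mathcal{A} \subseteq \mathcal{J}$.

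The final step is purely formal: given the hypothesis that $S < H_1 \times H_2$ is a co-abelian, finitely presented subdirect product with $H_1, H_2 \in \mathcal{A}$, the containment above puts $H_1, H_2$ in $\mathcal{J}$, so Theorem \ref{fin-pres-sigma} (Proposition E) applies and delivers the conclusion that $S$ is of type $F_{\infty}$. There is really no hard step here, since the entire $\Sigma$-theoretic content has already been carried out in the proof of Proposition E; the only thing to verify is the set-theoretic inclusion $\mathcal{A} \subseteq \mathcal{J}$, which in turn reduces to the elementary observation that cyclic subgroup separability is simply an extra property layered on top of the structural hypotheses defining $\mathcal{C}$.
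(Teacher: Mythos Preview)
Your proposal is correct and follows exactly the same approach as the paper: the paper's proof is the single line ``Since $\mathcal{A} \subseteq \mathcal{J}$, we can apply Theorem \ref{fin-pres-sigma},'' and you have simply spelled out the containment $\mathcal{A} \subseteq \mathcal{J}$ via the obvious level-by-level induction starting from $\mathcal{G} \subseteq \mathcal{C}$.
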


\begin{proof} Since $\mathcal{A} \subseteq \mathcal{J}$, we can apply Theorem \ref{fin-pres-sigma}.
\end{proof}

\end{document}